\DeclareRobustCommand{\lyxsout}[1]{\ifx\\#1\else\sout{#1}\fi}
\numberwithin{equation}{section}
\numberwithin{figure}{section}
\theoremstyle{plain}
\newtheorem{thm}{\protect\theoremname}[section]
\theoremstyle{definition}
\newtheorem{example}[thm]{\protect\examplename}
\theoremstyle{plain}
\newtheorem{cor}[thm]{\protect\corollaryname}
\theoremstyle{plain}
\newtheorem{lem}[thm]{\protect\lemmaname}
\theoremstyle{plain}
\newtheorem*{fact*}{\protect\factname}
\theoremstyle{remark}
\newtheorem{rem}[thm]{\protect\remarkname}
\theoremstyle{plain}
\newtheorem{prop}[thm]{\protect\propositionname}
\newcommand{\e}{\mathrm{e}}
\newcommand{\1}{\mathbbm{1}}
\newcommand{\N}{\mathbb{N}}
\newcommand{\Z}{\mathbb{Z}}
\newcommand{\R}{\mathbb{R}}
\renewcommand{\Pi}{\pi}
\renewcommand{\emptyset}{\varnothing}
\DeclareMathOperator*{\card}{card}
\newcommand{\emptyword}{\emptyset}
\renewcommand{\emptyset}{\varnothing}
\renewcommand{\tilde}{\widetilde}
\newcommand{\Rep}{\mathcal{R}}
\newcommand{\FI}{F} 
\newcommand{\D}{D_{\psi}} 
\providecommand{\corollaryname}{Corollary}
\providecommand{\examplename}{Example}
\providecommand{\factname}{Fact}
\providecommand{\lemmaname}{Lemma}
\providecommand{\propositionname}{Proposition}
\providecommand{\remarkname}{Remark}
\providecommand{\theoremname}{Theorem}
\begin{document}
\title{Thermodynamic formalism for transient dynamics on the real line}
\author{Maik Gröger}
\address{Faculty of Mathematics, University of Vienna, Oskar Morgensternplatz
1, 1090 Vienna, Austria}
\email{maik.groeger@univie.ac.at}
\author{Johannes Jaerisch}
\address{Graduate School of Mathematics, Nagoya University, Furocho, Chikusaku,
Nagoya, 464-8602 Japan}
\email{jaerisch@math.nagoya-u.ac.jp}
\author{Marc Kesseböhmer}
\address{FB03 -- Mathematik und Informatik, Universität Bremen, 28359 Bremen,
Germany}
\email{mhk@math.uni-bremen.de}
\begin{abstract}
We develop a new thermodynamic formalism to investigate the transient
behaviour of maps on the real line which are skew-periodic $\Z$-extensions
of expanding interval maps. Our main focus lies in the dimensional
analysis of the recurrent and transient sets as well as in determining
the whole dimension spectrum with respect to $\alpha$-escaping sets.
Our results provide a one-dimensional model for the phenomenon of
a dimension gap occurring for limit sets of Kleinian groups. In particular,
we show that a dimension gap occurs if and only if we have non-zero
drift and we are able to precisely quantify its width as an application
of our new formalism.
\end{abstract}

\keywords{Transient dynamics, skew products, thermodynamic formalism, random
walks and multifractals}
\thanks{MG was supported by the DFG grants JA 1721/2-1 and GR 4899/1-1, JJ
was supported by JSPS KAKENHI 17K14203 and MK acknowledges support
by the DFG grant KE 1440/3-1. This project was also part of the activities
of the Scientific Network \textquotedblleft Skew product dynamics
and multifractal analysis\textquotedblright{} (DFG grant OE 538/3-1).}
\maketitle

\section{Introduction}

The main motivation of this article is the connection between transient
phenomena of dynamical systems and its manifestation in dimensional
quantities. Since transience can impose major obstructions to an ergodic-theoretic
description of (fractal-)geometric features its further understanding
is vital and has attracted a lot of attention. For instance, it has
a strong tradition in complex dynamics with landmark results like
the ones obtained for complex quadratic polynomials in \cite{MR1626737}
or \cite{MR2373353}. A closely related and paralleling line of research
established corresponding results for families of transcendental functions,
see for example \cite{MR2302520,MR2465667,MR2197375,MR871679}. In
both cases, a particular striking effect revealing the preponderance
of transience is the occurrence of a so-called dimension gap. In fact,
the origin of this phenomenon goes back to the rich field of geometric
group theory which we explain in more detail further below.

In the framework of thermodynamic formalism, transient effects in
topological Markov chains have been seminally studied by Sarig \cite{MR1738951,MR1818392}.
Directly related to this are fractal-geometric applications of thermodynamic
formalism for infinite conformal graph directed Markov systems which
have been systematically worked out by Mauldin and Urbanski in \cite{MR2003772}.
In there, strong mixing conditions were introduced to guarantee that
the recurrent behaviour governs the system. One main goal of this
paper is to set up a new thermodynamic formalism in the absence of
such strong mixing conditions to provide a systematic approach to
the geometric phenomenon of a dimension gap. More precisely, we introduce
the concept of fibre-induced pressure which allows us to express the
occurrence and the width of a dimension gap for skew-periodic $\Z$-extensions
of expanding interval maps exclusively in terms of this newly developed
pressure. Furthermore, we obtain effective analytic relations between
the fibre-induced pressure and the classical pressure of the base
transformation allowing us to determine the crucial dimensional quantities
in a number of examples explicitly.

Let us now illustrate the phenomenon of a dimension gap in the setting
of actions of non-elementary Kleinian groups $G$ on the hyperbolic
space $\mathbb{H}^{n}$. By a general result of Bishop and Jones \cite{MR1484767}
we know that the Hausdorff dimension of both the radial limit set
$\Lambda_{r}\left(G\right)$ and the uniformly radial limit set $\Lambda_{ur}\left(G\right)$
of $G$ are equal to the Poincaré exponent of $G$ given by
\begin{equation}
\delta_{G}\coloneqq\inf\left\{ s\geq0:\sum_{g\in G}\e^{-s\cdot d_{H}\left(0,g0\right)}<\infty\right\} ,\label{eq:definition Poincare exponent group}
\end{equation}
where $d_{H}$ denotes the hyperbolic distance on $\mathbb{H}^{n}$.
Recall that $\Lambda_{r}\left(G\right)$ and $\Lambda_{ur}\left(G\right)$
represent recurrent dynamics of the geodesic flow on $\mathbb{H}^{n}/G$.
Clearly, for a normal subgroup $N<G$ we have that $\delta_{G}\geq\delta_{N}$
and moreover, 
\[
\dim_{H}\left(\Lambda_{r}\left(G\right)\right)=\delta_{G}>\delta_{N}=\dim_{H}\left(\Lambda_{r}\left(N\right)\right)\iff G/N\;\text{is non-amenable,}
\]
with $\dim_{H}(\,\cdot\,)$ the Hausdorff dimension of the corresponding
set. This was first proved by Brooks for certain Kleinian groups fulfilling
$\delta_{G}>(n-1)/2$ in \cite{MR783536} and later generalised to
a wider class of groups without this restriction by Stadlbauer \cite{Stadlbauer11}.
Note that by a result of Falk and Stratmann $\delta_{N}\ge\delta_{G}/2$,
see \cite{MR2097162}. If $G$ is additionally geometrically finite,
then the strict inequality $\delta_{N}>\delta_{G}/2$ holds by a result
of Roblin \cite{MR2166367} (see also \cite{MR3299281}). Furthermore,
if $\Lambda\left(G\right)$ denotes the limit set of the Kleinian
group $G$, then $\delta_{G}=\dim_{H}\left(\Lambda\left(G\right)\right)$
and, since $\Lambda\left(N\right)=\Lambda\left(G\right)$, this implies
the following criterion for the occurrence of a \emph{dimension gap}:
\[
\dim_{H}\left(\Lambda_{r}\left(N\right)\right)=\dim_{H}\left(\Lambda_{ur}\left(N\right)\right)<\dim_{H}\left(\Lambda\left(N\right)\right)\iff G/N\;\text{is non-amenable.}
\]
In other words, a certain amount of transient behaviour causes a dimension
gap from the dimension of the full limit set compared to the restriction
of the limit set to certain recurrent parts. It is remarkable that
for Kleinian groups the presence of a dimension gap depends only on
the group-theoretic property of\emph{ }amenability. Accordingly, a
natural example for the occurrence of a dimension gap is given by
a Schottky group $G=N\rtimes\mathbb{F}_{2}$ where $\mathbb{F}_{2}$
denotes the free group with two generators. Nevertheless, only little
is known in the literature concerning the concrete size of this dimension
gap.

The occurrence of a dimension gap is closely related to the decay
of certain return probabilities. In fact, Kesten \cite{MR0112053,MR0109367}
has shown for symmetric random walks on countable groups that exponential
decay of return probabilities is equivalent to non-amenability. However,
for amenable groups exponential decay can also be caused by non-symmetric
random walks. To be more precise, for groups admitting a recurrent
random walk (e.g.~$\Z$) it is shown in \cite{MR3436756} that exponential
decay of return probabilities is equivalent to a lack of certain symmetry
condition on the thermodynamic potential related to the random walk
(see also Remark \ref{rem:characterisation of recurrence} for further
details).

We are aiming at investigating these closely linked phenomena for
a class of maps on $\R$ which can be considered as models of $\Z$-extensions
of Kleinian groups. In fact, if the Kleinian group $G=N\rtimes\Z$
is a Schottky group, then the elements in $\Lambda_{r}\left(N\right)$
can be characterised as the limits of $G$-orbits for which the $\Z$-coordinate
returns infinitely often to some point in $\mathbb{Z}$ (compare this
with our definition of a recurrent set, see Section \ref{subsec:Recurrent-and-transient}).
Since $\Z$ is amenable, these limit points have full Hausdorff dimension
by Brooks' amenability criterion. We will see later (end of Section
\ref{sec:Examples}) that this property also follows from the fact
that the $\Z$-coordinate has zero drift with respect to a canonical
invariant measure obtained from the Patterson-Sullivan construction.

Our models witness drift behaviour and we show that indeed non-zero
drift is equivalent to the occurrence of a dimension gap, see Theorem
\ref{thm:-dimension gap}. It is therefore also very natural to consider
subsets of the transient dynamics with fixed drift in more detail.
This motivates the definitions of various escaping sets in our one-dimensional
models, see Section \ref{subsec:Escaping-sets}. The related dimension
spectra will allow us to determine the size of the dimension gap explicitly.
Similar results will be shown in the forthcoming paper \cite{JKG20}
on $\Z$-extensions with reflective boundaries allowing us to illuminate
earlier results in \cite{MR1438267,MR2959300,MR3610938} which studied
a family suggested by van Strien modelling induced maps of Fibonacci
unimodal maps. Let us point out that drift arguments where also prominent
in the proofs of \cite{MR2959300}.

Our leading motivating example for which we obtain dimensional results
on the transient behaviour stems from the family of (a-)symmetric
random walks, see Example \ref{exa:Classical Random Walk}. More precisely,
let $\FI$ be an expanding interval map with finitely many $C^{1+\epsilon}$
full branches, say $\left.\FI\right|_{I_{i}}:I_{i}\rightarrow[0,1]$
with disjoint intervals $I_{i}\subset[0,1]$ with non-empty interior,
$i\in I\coloneqq\{1,\dots,m\}$, $m\geq2$. Set $h_{i}\coloneqq H_{i}^{-1}:\left[0,1\right]\to\overline{I_{i}}$
for the continuous continuation $H_{i}$ of $\left.\FI\right|_{I_{i}}$
to $\overline{I_{i}}$ and define the corresponding coding map $\pi:\Sigma\coloneqq I^{\N}\rightarrow[0,1]$
by $\pi\left(\omega_{1},\omega_{2},\ldots\right)\coloneqq x$ for
$\bigcap_{n\in\N}h_{\omega_{1}}\circ\cdots\circ h_{\omega_{n}}\left(\left[0,1\right]\right)=\left\{ x\right\} $.
The \emph{repeller} of $F$ is then $\pi\left(\Sigma\right)\subset[0,1]$
and we set $\Rep\coloneqq\pi\left(\Sigma\right)\setminus\mathcal{D}+\Z$,
where we subtract the countable set $\mathcal{D}$ to avoid technical
problems stemming from possible discontinuities at the boundaries
of the $I_{i}$'s, see (\ref{eq:D}) for the precise definition of
$\mathcal{D}$. We assume that the \emph{step length function} $\Psi:[0,1]\rightarrow\Z$
is constant on each of the intervals $I_{i}$ and consider the \emph{$\Psi$-lift}
of $\FI$ given by 
\begin{align*}
\FI_{\,\Psi}:\Rep & \rightarrow\Rep:\quad x\mapsto\sum_{k\in\Z}\left(k+F(x-k)+\Psi(x-k)\right)\1_{[0,1]}(x-k).
\end{align*}
Since this function is periodic up to a certain skewness induced by
$\Psi$, we refer to this map as a \emph{skew-periodic interval map}.
In order to study the various escaping sets of $\FI_{\,\Psi}$ later
it will be necessary to consider $\R$-extensions rather than $\Z$-extensions.
This is the reason why in our abstract set-up we will consider \emph{skew
product dynamical systems} defined via
\begin{equation}
\sigma\rtimes f:\Sigma\times\R\rightarrow\Sigma\times\R,\quad(\sigma\rtimes f)(\omega,x)\coloneqq(\sigma(\omega),x+f(\omega)),\label{eq:definition skew product dynamical system}
\end{equation}
for some function $f:\Sigma\to\R$. We want to stress that after neglecting
a countable set $\FI_{\,\Psi}$ is topological conjugate to the $\Z$-extension
$\sigma\rtimes\left(\Psi\circ\pi\right):\Sigma\times\Z\to\Sigma\times\Z$,
see Lemma \ref{fac:factor}.
\begin{example}[One-dimensional random walk]
\label{exa:Classical Random Walk} We model a classical one-step
random walk via a skew-periodic interval map $\FI_{\,\Psi}$. For
this fix $c_{1},c_{2}\in\left(0,1\right)$ with $c_{1}+c_{2}\leq1$
and consider the map 
\[
F:x\mapsto\begin{cases}
c_{1}^{-1}x & \text{for }x\in[0,c_{1}]\\
c_{2}^{-1}x & \text{for }x\in\left(1-c_{2},1\right]
\end{cases},
\]
with code space $\Sigma:=\left\{ 1,2\right\} ^{\N}$ and set $\Psi\coloneqq-\1_{[0,c_{1}]}+\1_{\left(1-c_{2},1\right]}$
(see Figure \ref{fig:Classical-Random-Walk}). In this setting we
will also refer to $F_{\,\Psi}:\Rep\to\Rep$ as the\emph{ random walk
model}. For any $\left(p_{1},p_{2}\right)$-Bernoulli measure $\mu$
on $\Sigma$, we can model the classical (a-)symmetric random walk
on $\Z$ with transition probability $p_{1}$ to go one step left
and probability $p_{2}$ one step right. The random walk starting
in $0\in\mathbb{\Z}$ would then be given by the stochastic process
$\left(\left\lfloor F_{\,\Psi}^{n}\right\rfloor \right)_{n\in\N}$
with respect to the probability measure $\mu\circ\pi^{-1}$ on the
repeller $\pi\left(\Sigma\right)$. Note that $\pi\left(\Sigma\right)$
is a Cantor set with Hausdorff dimension $\delta<1$ if and only if
$c_{1}+c_{2}<1$ where $\delta$ is the unique number $s$ with $c_{1}^{s}+c_{2}^{s}=1$.
Otherwise, $\pi\left(\Sigma\right)$ is the unit interval and hence,
$\Rep=\R\setminus(\mathcal{D}+\Z)$.
\begin{figure}[h]
\begin{tikzpicture}[line cap=round,line join=round,>=triangle 45,x=.85cm,y=0.85cm] \draw[->,color=black] (-3.9,0.) -- (3.9,0.); \foreach \x in {-3,-2,-1,1,2,3} \draw[shift={(\x,0)},color=black] (0pt,2pt) -- (0pt,-2pt) node[below] {\scriptsize $\x$}; \draw[->,color=black] (0.,-3.9) -- (0.,3.9); \foreach \y in {-3,-2,-1,1,2,3} \draw[shift={(0,\y)},color=black] (2pt,0pt) -- (-2pt,0pt) node[left] {\scriptsize $\y$}; \draw[color=black] (0pt,-10pt) node[right] {\scriptsize $0$}; \clip(-3.9,-3.9) rectangle (3.9,3.9); 

\draw[line width=1.2pt, samples=10,domain=-3:-2.6] 
plot(\x,{2.50*(\x)+3.5}); 
\draw[line width=1.2pt, samples=10,domain=-2.6:-2] 
plot(\x,{5/3*(\x)+7/3});
\draw[line width=1.2pt, samples=10,domain=-2:-1.6] 
plot(\x,{2.5*(\x)+2}); 
\draw[line width=1.2pt, samples=10,domain=-1.6:-1] 
plot(\x,{5/3*(\x)+5/3});

\draw[line width=1.2pt, samples=10,domain=-1:-0.6] 
plot(\x,{2.50*(\x)+0.5}); 
\draw[line width=1.2pt, samples=10,domain=-0.6:0] 
plot(\x,{5/3*(\x)+3/3});
\draw[line width=1.2pt, samples=10,domain=0:0.4] 
plot(\x,{2.5*(\x)-1}); 
\draw[line width=1.2pt, samples=10,domain=0.4:1] 
plot(\x,{5/3*(\x)+1/3});
\draw[line width=1.2pt, samples=10,domain=1:1.4] 
plot(\x,{2.50*(\x)-2.5}); 
\draw[line width=1.2pt, samples=10,domain=1.4: 2] 
plot(\x,{5/3*(\x)-1/3});
\draw[line width=1.2pt, samples=10,domain= 2:2.4] 
plot(\x,{2.5*(\x)-4}); 
\draw[line width=1.2pt, samples=10,domain=2.4:3] 
plot(\x,{5/3*(\x)-3/3});
  \end{tikzpicture}

\caption{The graph of $F_{\,\Psi}$ modelling the (a-)symmetric random walk
with parameters $c_{1}=0.4$ and $c_{2}=0.6$.\label{fig:Classical-Random-Walk}}
\end{figure}
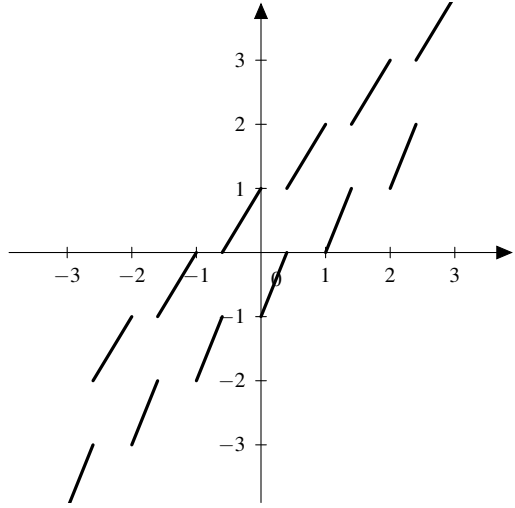

Crucial for our analysis will be the \emph{fibre-induced pressure
}$\mathcal{P}$ and its close connection to the so-called \emph{$\alpha$-Poincaré
exponent} $\delta_{\alpha}$ defined in the next section. Indeed,
the new pressure $\mathcal{P}$ is a natural generalisation of the
notion of Gurevich pressure (cf.~\cite{MR1738951} and Remark \ref{rem:original-gurevich})
and is necessary to perform our analysis for general escaping rates,
see Section \ref{subsec:Escaping-sets}. In particular, Gurevich pressure
is defined for topological Markov chains whereas our new notion is
defined for more general $\R$-extensions. Further, we will see that
our new quantity can also be deduced from the\emph{ classical pressure}
$\mathfrak{P}$. In fact, we will show in Theorem \ref{thm:fibre-induced pressure via base pressure}
below that 
\[
\mathcal{P}\left(f,\psi\right)=\inf_{s\in\R}\mathfrak{P}\left(s\psi+f\right),
\]
for $f,\psi:\Sigma\rightarrow\R$ Hölder continuous and $\psi$ satisfying
natural conditions fulfilled in our setting. Generalising the concept
of the classical Poincaré exponent, the $\alpha$-Poincaré exponent
will reveal a natural connection to the analysis of limit sets of
Kleinian groups as well as the dimension theory of Birkhoff averages,
see the remark after Theorem \ref{thm: Multifractal Decomposition}
and Remark \ref{rem:delta_=00005Calpha as Birkhoff average}.
\end{example}

\subsection{Main results}

We define the \emph{geometric potential} $\varphi:\Sigma\to(-\infty,0)$
in a Hölder continuous way such that $\varphi\left(\omega\right)\coloneqq-\log\left|\FI'\left(\pi\left(\omega\right)\right)\right|$
except possibly on a finite set. Denote by $\delta\text{\ensuremath{>0}}$
the unique $s$ such that $\mathfrak{P}\left(s\varphi\right)=0$.
Here, $\mathfrak{P}\left(f\right)$ refers to the classical topological
pressure defined for any continuous function $f:\Sigma\to\R,$ see
end of Section \ref{sec:Preliminaries-and-basic}. Then by \emph{Bowen's
formula} we have for the repeller $\pi(\Sigma)$ of $\FI$ that 
\begin{equation}
\delta=\dim_{H}(\pi(\Sigma)).\label{eq:Hausdorff dim Bowen formula}
\end{equation}
Further, we can associate to $\Psi$ a \emph{symbolic step length
function} $\psi:\Sigma\to\Z$ which is constant on one-cylinder sets
such that $\psi=\Psi\circ\pi$ except possibly on a finite set. Let
us introduce the\emph{ $\alpha$-Poincaré exponent}
\begin{equation}
\delta_{\alpha}:=\inf\left\{ s\ge0\mid\sum_{{\omega\in I^{\star}},\,{\left|S_{\omega}\left(\psi-\alpha\right)\right|\leq K}}\e^{s\cdot S_{\omega}\varphi}<\infty\right\} ,\label{eq:definition Poincare exponent}
\end{equation}
where $I^{\star}$ denotes the set of all finite words over the \emph{alphabet}
$I$ and $K>0$ is sufficiently large (the term $S_{\omega}f$, $\omega\in I^{\star}$
is defined in (\ref{eq:birkhoff sum and max sum}) for any $f:\Sigma\to\mathbb{R}$).
For this exponent we observe
\begin{equation}
\alpha\in\R\setminus\left[\underline{\psi},\overline{\psi}\right]\implies\delta_{\alpha}=0\label{eq:delta_alpha=00003D0}
\end{equation}
and for $\alpha\in\text{\ensuremath{\big[}}\underline{\psi},\overline{\psi}\big]$
the critical exponent $\delta_{\alpha}$ can be expressed as the unique
zero of the fibre-induced pressure for suitable potentials and is
positive on $(\underline{\psi},\overline{\psi})$ (see Theorem \ref{thm:critical exponent via base pressures}
as well as (\ref{eq:Psiupperlower}) for the definition of $\underline{\psi}$
and $\overline{\psi}$). We will also see that the map $\alpha\mapsto\delta_{\alpha}$
is real-analytic on $\big(\underline{\psi},\overline{\psi}\big)$
and unimodal but not necessarily concave (cf.~Section \ref{sec:Multifractal-decomposition}
and Section \ref{subsec:First-examples} for examples).

\subsubsection{Recurrent and transient sets and dimension gap\label{subsec:Recurrent-and-transient}}

For $\FI_{\,\Psi}$ we define the \emph{recurrent set} 
\[
\mathbf{R}:=\left\{ x\in\Rep\mid\exists K\in\R\:\text{such that }\left|\FI_{\,\Psi}^{n}(x)\right|\leq K\mbox{ for infinitely many }n\in\N\right\} 
\]
and the\emph{ uniform recurrent set }
\[
\mathbf{R}_{u}:=\left\{ x\in\Rep\mid\exists K\in\R\;\forall n\in\N\;\left|\FI_{\,\Psi}^{n}(x)\right|\leq K\right\} .
\]

The \emph{positive (resp. negative) transient set} is given by
\[
\mathbf{T}_{1}^{\pm}\coloneqq\left\{ x\in\Rep\mid\lim_{n\to\infty}\FI_{\,\Psi}^{n}(x)=\pm\infty\right\} .
\]
We also consider the supersets
\[
\mathbf{T}_{2}^{\pm}\coloneqq\left\{ x\in\Rep\mid\limsup_{n\to\infty}\mp\FI_{\,\Psi}^{n}(x)<\infty\right\} ,\,\,\ \mathbf{T}_{3}^{\pm}\left(r\right)\coloneqq\left\{ x\in\Rep\mid\forall n\geq0\;\pm\FI_{\,\Psi}^{n}(x)>r\right\} ,
\]
for some $r\in\R$. Observe that
\begin{equation}
\mathbf{T}_{1}^{\pm}\subseteq\mathbf{T}_{2}^{\pm}=\bigcup_{k\in\Z}\mathbf{T}_{3}^{\pm}\left(k\right).\label{eq:T2UnionT3}
\end{equation}
The following theorems will be a consequence of our general multifractal
decomposition for escaping sets presented in the next subsection.
The corresponding proofs can be found in Section \ref{sec:Multifractal-decomposition}.
The first theorem is the analogue of the result of Bishop and Jones
in our setting. Last, recall that for the Hölder continuous function
$\delta\varphi:\Sigma\to\R$ there exists a unique Gibbs measure $\mu_{\delta\varphi}$,
see end of Section \ref{sec:Preliminaries-and-basic}.
\begin{thm}
\label{thm:Bishop and Jones}Let $\FI_{\,\Psi}:\Rep\rightarrow\Rep$
be the $\Psi$-lift of an expanding interval map $\FI$. Then we have
for the recurrent and uniformly recurrent sets 
\[
\dim_{H}(\mathbf{R})=\dim_{H}(\mathbf{R}_{u})=\delta_{0}.
\]
\end{thm}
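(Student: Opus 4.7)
Since $\mathbf{R}_u \subseteq \mathbf{R}$, the plan is to establish the two-sided sandwich $\dim_H(\mathbf{R}_u) \geq \delta_0 \geq \dim_H(\mathbf{R})$, attacking each inequality separately.

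For the upper bound I would cover $\mathbf{R}$ efficiently using the inverse-branch cylinders $h_\omega([0,1])$. By the topological conjugacy between $\FI_{\,\Psi}$ on $\Rep$ and the $\Z$-extension $\sigma \rtimes (\Psi \circ \pi)$ (Lemma \ref{fac:factor}), a point $x \in [k,k+1] \cap \Rep$ with symbolic coding $\omega = \pi^{-1}(x-k)$ satisfies $\lfloor \FI_{\,\Psi}^n(x) \rfloor = k + S_n\psi(\omega)$ off a countable set. Thus the defining condition of $\mathbf{R}$ yields the covering
\[
\mathbf{R} \cap [k,k+1] \;\subseteq\; k + \bigcap_{n_0 \geq 1} \bigcup_{n \geq n_0} \bigcup_{\substack{\omega \in I^n \\ |S_\omega \psi + k| \leq K+1}} h_\omega([0,1]).
\]
By the standard bounded distortion property for the $C^{1+\epsilon}$ expanding base, $\diam(h_\omega([0,1])) \asymp \e^{S_\omega \varphi}$. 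After enlarging $K$ in (\ref{eq:definition Poincare exponent}) to absorb $|k|$, the Poincar\'e series converges for every $s > \delta_0$, so sending $n_0 \to \infty$ forces the $s$-dimensional Hausdorff premeasure of $\mathbf{R} \cap [k,k+1]$ to vanish. Since $\mathbf{R}$ is the countable union of these slices, $\dim_H(\mathbf{R}) \leq s$ for all $s > \delta_0$.

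For the lower bound I would exhibit an infinite conformal sub-IFS whose limit set embeds into $\mathbf{R}_u$. Let $A_K$ denote the set of words $\omega \in I^\star$ with $S_\omega \psi = 0$ and such that every prefix $\omega'$ of $\omega$ satisfies $|S_{\omega'}\psi| \leq K$; such words exist in abundance because $\psi$ is integer-valued. The family $\{h_\omega\}_{\omega \in A_K}$ is a conformal IFS on $[0,1]$ in the sense of Mauldin and Urbanski \cite{MR2003772}, and any point in its limit set has a $\FI_{\,\Psi}$-orbit whose integer part is trapped in $[-K,K]$, hence belongs to $\mathbf{R}_u$. By their extension of Bowen's formula, the dimension of this limit set equals the zero of the induced pressure
\[
s \longmapsto \lim_{n \to \infty} \frac{1}{n}\log\sum_{\substack{\omega \in A_K \\ |\omega| = n}} \e^{s\, S_\omega \varphi}.
\]
As $K \to \infty$, monotonicity together with the characterisation of $\delta_0$ as the critical exponent of the Poincar\'e series (\ref{eq:definition Poincare exponent}) forces this zero to converge to $\delta_0$, yielding $\dim_H(\mathbf{R}_u) \geq \delta_0$.

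The main obstacle will be the quantitative matching in the lower bound: showing that the cascade of bounded return-word subsystems genuinely exhausts $\delta_0$ rather than leaving a gap. This requires approximating the fibre-induced pressure by pressures of finite or $K$-bounded truncations and invoking the continuity/convexity machinery developed in Theorem \ref{thm:critical exponent via base pressures} together with the identification $\mathcal{P}(f,\psi) = \inf_{s\in\R} \mathfrak{P}(s\psi + f)$ from Theorem \ref{thm:fibre-induced pressure via base pressure}. The bookkeeping around the exceptional set $\mathcal{D}$ is only a notational nuisance, since $\mathcal{D} + \Z$ is countable and therefore invisible to Hausdorff dimension.
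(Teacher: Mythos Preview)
Your upper bound is essentially the paper's: in the proof of Theorem~\ref{thm: Multifractal Decomposition} the same $\limsup$-covering by $\pi[\omega]$ with $\omega\in\mathcal C_n(\psi,K)$ is used verbatim, and Theorem~\ref{thm:Bishop and Jones} is then deduced as the special case $\alpha=0$ (noting $\mathbf R=\mathbf E(0)$, $\mathbf R_u=\mathbf E_u(0)$).

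For the lower bound the paper takes a different route: rather than invoking the Mauldin--Urba\'nski machinery, it builds by hand a Cantor set $C\subset\mathbf E_u(0)$ carrying a Frostman measure of exponent $s$ for every $s<\delta_0$. The key device is Lemma~\ref{lem:D}: one fixes a single large block length $\ell$ with $\sum_{\omega\in\mathcal C_\ell(\psi,\epsilon)}\e^{sS_\omega\varphi}>M$, concatenates such blocks, and inserts uniformly bounded ``connecting words'' from a fixed finite set $\Lambda$ to reset the $\psi$-sum after each block. This produces words whose \emph{partial} $\psi$-sums are bounded by a constant depending only on $\ell$, $\epsilon$ and $\Lambda$, and the mass distribution principle finishes the job.

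Your sub-IFS approach is a reasonable alternative packaging, but it has a genuine gap at exactly the point you flag. The Poincar\'e series in~(\ref{eq:definition Poincare exponent}) sums over words with $|S_\omega\psi|\le K$; your set $A_K$ imposes the much stronger condition that \emph{every prefix} satisfy $|S_{\omega'}\psi|\le K$ and that $S_\omega\psi=0$. Monotonicity in $K$ and the definition of $\delta_0$ give only one inequality; they do not show that the $A_K$-pressure zeros climb all the way to $\delta_0$. What is missing is precisely the ``short-return concatenation'' idea above: to know that enough $\mathcal C(\psi,K)$-mass survives the prefix constraint, you must manufacture words with uniformly bounded excursions whose $s$-weights are still large, and this is the content of Lemma~\ref{lem:D} together with the inequality chain~(\ref{eq:GeometricInequality}). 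Gesturing at Theorems~\ref{thm:fibre-induced pressure via base pressure} and~\ref{thm:critical exponent via base pressures} does not supply this, because those results concern $\mathcal C(\psi,K)$ with no prefix restriction. Two smaller issues: your IFS $\{h_\omega\}_{\omega\in A_K}$ has nested images (any $\omega\in A_K$ that factors through a shorter $\omega'\in A_K$ gives $h_\omega([0,1])\subset h_{\omega'}([0,1])$), so you must pass to first-return words before invoking \cite{MR2003772}; and the displayed ``induced pressure'' is indexed by the original word length $|\omega|$ rather than the IFS depth, so it is not the Mauldin--Urba\'nski pressure whose zero gives the limit-set dimension.
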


We say the system $\FI_{\,\Psi}$ has a \emph{dimension gap} if the
Hausdorff dimension $\delta_{0}$ of the (uniformly) recurrent set
is strictly less than the Hausdorff dimension $\delta$ of $\Rep$.
In fact, this is the case if and only if the system has a \emph{drift}
$\mu_{\delta\varphi}\left(\psi\right)\neq0$, see Theorem \ref{thm:-dimension gap}
below. Furthermore, we are able to provide direct methods to determine
$\delta_{0}$, see Theorem \ref{thm:critical exponent via base pressures}.
This allows us to easily calculate $\delta_{0}$ for our examples
and to precisely quantify the dimension gap, see for instance Figure
\ref{fig:delta_0Graph}.
\begin{thm}
\label{thm:-Transient-sets}For the transient sets the following implications
hold:
\begin{itemize}
\item $\mu_{\delta\varphi}\left(\psi\right)\geq0$ implies $\dim_{H}\left(\mathbf{T}^{-}\right)=\delta_{0}$
and $\dim_{H}\left(\mathbf{T}^{+}\right)=\delta$,
\item $\mu_{\delta\varphi}\left(\psi\right)\leq0$ implies $\dim_{H}\left(\mathbf{T}^{+}\right)=\delta_{0}$
and $\dim_{H}\left(\mathbf{T}^{-}\right)=\delta$,
\end{itemize}
where $\mathbf{T}^{\pm}$ may be chosen to be one of the sets $\mathbf{T}_{1}^{\pm}$,
$\mathbf{T}_{2}^{\pm}$ or $\mathbf{T}_{3}^{\pm}\left(r\right)$ for
any $r\in\R$.
\end{thm}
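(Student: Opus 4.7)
My plan is to deduce the statement from the multifractal decomposition for $\alpha$-escaping sets (Theorem \ref{thm: Multifractal Decomposition}) together with the unimodality of $\alpha \mapsto \delta_\alpha$ about its peak $\alpha^\ast \coloneqq \mu_{\delta\varphi}(\psi)$, at which $\delta_{\alpha^\ast} = \delta$. It suffices to treat the case $\alpha^\ast \geq 0$, since replacing $\Psi$ by $-\Psi$ (hence $\psi$ by $-\psi$) converts the other case to this one. Throughout I work symbolically via the conjugacy of $F_\Psi$ with $\sigma \rtimes (\Psi\circ\pi)$ afforded by Lemma \ref{fac:factor}, so that for $\omega = \pi^{-1}(x)$ one has $F_\Psi^n(x) - x = S_n \psi(\omega)$ up to an error bounded by one. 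In particular, $x \in \mathbf{T}_1^\pm$ corresponds to $S_n\psi(\omega) \to \pm\infty$, while $x \in \mathbf{T}_3^\mp(k)$ corresponds to $\mp S_n\psi(\omega) \leq k - x + 1$ for every $n$.

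For the lower bounds, any $\alpha \in (0, \overline{\psi})$ yields $J(\alpha) \subseteq \mathbf{T}_1^+$ because $S_n\psi(\omega) \sim \alpha n \to +\infty$ on the $\alpha$-escaping set $J(\alpha)$; analogously $J(\alpha) \subseteq \mathbf{T}_1^-$ for $\alpha \in (\underline{\psi}, 0)$. Taking suprema and invoking Theorem \ref{thm: Multifractal Decomposition} gives
\[
\dim_H(\mathbf{T}_1^+) \;\geq\; \sup_{0 < \alpha < \overline{\psi}} \delta_\alpha \;=\; \delta \qquad \text{and} \qquad \dim_H(\mathbf{T}_1^-) \;\geq\; \sup_{\underline{\psi} < \alpha < 0} \delta_\alpha \;=\; \delta_0,
\]
where the first equality uses that the peak $\alpha^\ast$ is nonnegative (so either the supremum is attained at $\alpha^\ast$ when $\alpha^\ast > 0$, or the continuity at $0$ lets us take the limit $\alpha \to 0^+$ with $\delta_0 = \delta$ when $\alpha^\ast = 0$), and the second equality uses continuity together with monotonicity of $\alpha \mapsto \delta_\alpha$ on $(\underline{\psi}, \alpha^\ast]$.

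For the upper bounds, $\dim_H(\mathbf{T}_2^+) \leq \delta$ is immediate from $\mathbf{T}_2^+ \subseteq \Rep$ combined with Bowen's formula (\ref{eq:Hausdorff dim Bowen formula}). The serious task is to bound $\dim_H(\mathbf{T}_2^-)$. By (\ref{eq:T2UnionT3}) and countable stability it is enough to bound $\dim_H(\mathbf{T}_3^-(k))$ for fixed $k \in \Z$. Points in this set have $S_n\psi(\omega)$ uniformly bounded above, and therefore $\limsup_n \tfrac{1}{n}S_n\psi(\omega) \leq 0$. A covering argument at the level of $n$-cylinders, with covers weighted by $\exp(s\cdot S_n\varphi)$ and restricted so that $S_n\psi$ stays below $\alpha n$ for $\alpha$ close to $0$, then yields $\dim_H(\mathbf{T}_3^-(k)) \leq \sup_{\alpha \leq 0}\delta_\alpha$. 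By unimodality of $\delta_\alpha$ with peak at $\alpha^\ast \geq 0$, this supremum coincides with $\delta_0$, as required.

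The main obstacle is the covering step in the last upper bound: Theorem \ref{thm: Multifractal Decomposition} delivers dimensions of the \emph{exact} level sets $J(\alpha)$, whereas $\mathbf{T}_3^-(k)$ only imposes the one-sided asymptotic condition $\limsup \tfrac{1}{n}S_n\psi \leq 0$ and can accommodate non-convergent Birkhoff averages. Passing from level-set equality to a one-sided upper estimate relies crucially on the formula $\delta_\alpha = \inf_{s \in \R}\mathfrak{P}(s\psi + \delta_\alpha \varphi)$ of Theorem \ref{thm:critical exponent via base pressures}: it provides a family of weighted covers indexed by $s$ which can be tuned to witness the drift constraint uniformly across scales, so that summability at the exponent $\delta_0$ is preserved along sequences with bounded $\psi$-Birkhoff sums. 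This is precisely where the fibre-induced pressure framework developed in the paper plays its decisive role.
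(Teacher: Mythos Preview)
Your overall plan matches the paper's: lower bounds come from the inclusions $\mathbf{E}_u(\alpha')\subset\mathbf{T}_1^{\pm}$ for $\alpha'$ of the appropriate sign, and the nontrivial upper bound is obtained by a covering argument for $\mathbf{T}_3^{-}(k)$, then passed to $\mathbf{T}_2^{-}$ via~(\ref{eq:T2UnionT3}). Two points, however, need attention.

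\textbf{Missing lower bound for $\mathbf{T}_3^{\pm}(r)$.} Your argument only produces $\dim_H(\mathbf{T}_1^{\pm})\ge\delta$ (resp.\ $\ge\delta_0$), and the inclusion $\mathbf{T}_1^{\pm}\subset\mathbf{T}_2^{\pm}$ then settles $\mathbf{T}_1^{\pm},\mathbf{T}_2^{\pm}$. But $\mathbf{T}_3^{\pm}(r)$ is not contained in either of those, nor does $\mathbf{E}_u(\alpha')\subset\mathbf{T}_3^{\pm}(r)$ hold: on the $\alpha'$-escaping set one only has $F_\Psi^{n}(x)\ge n\alpha'-K$, so the first few iterates may well fall below $r$. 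The paper closes this gap by the preimage trick
\[
\mathbf{T}_1^{\pm}\subset\bigcup_{\ell\ge0}F_\Psi^{-\ell}\bigl(\mathbf{T}_3^{\pm}(r)\bigr),
\]
and since each $F_\Psi^{-\ell}(\mathbf{T}_3^{\pm}(r))$ is a countable union of bi-Lipschitz images of $\mathbf{T}_3^{\pm}(r)$, countable stability gives $\dim_H(\mathbf{T}_3^{\pm}(r))\ge\dim_H(\mathbf{T}_1^{\pm})$. This step is genuinely needed and is absent from your proposal.

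\textbf{The upper bound for $\mathbf{T}_3^{-}$.} You correctly identify that this is where Theorem~\ref{thm:critical exponent via base pressures} enters, but your description (``covers weighted by $\e^{sS_n\varphi}$ restricted so that $S_n\psi$ stays below $\alpha n$'') is not yet a proof. The paper's execution is in fact very short and more direct than you suggest: by Proposition~\ref{prop:Properties of delta_alpha} one has $q_0\le 0$ when $\mu_{\delta\varphi}(\psi)\ge0$, so for any $\epsilon>0$ and any $n\in\Z$,
\[
\sum_{\substack{|\omega|>N\\ S_\omega\psi<-n}}\bigl|\pi[\omega]\bigr|^{\delta_0+\epsilon}
\;\le\;\e^{-q_0 n}\sum_{|\omega|>N}\e^{(\delta_0+\epsilon)S_\omega\varphi+q_0 S_\omega\psi}\;<\;\infty,
\]
because $\mathfrak{P}\bigl((\delta_0+\epsilon)\varphi+q_0\psi\bigr)<\mathfrak{P}(\delta_0\varphi+q_0\psi)=0$. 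This single tilted cover, with the fixed parameter $q_0$, handles the one-sided constraint directly; there is no need to approximate via level sets for $\alpha$ close to $0$ or to invoke a family of covers indexed by $s$.
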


\subsubsection{\label{subsec:Escaping-sets}Escaping sets}

For $\alpha\in\R$ let us now define the $\alpha$\emph{-escaping
set} for $\FI_{\,\Psi}$ by 
\begin{align*}
\mathbf{E}(\alpha) & \coloneqq\left\{ x\in\Rep\mid\exists K>0\;\left|\FI_{\,\Psi}^{n}(x)-n\alpha\right|\leq K\;\text{for infinitely many }n\in\N\right\} ,
\end{align*}
and the \emph{uniformly }$\alpha$\emph{-escaping set} for $\FI_{\,\Psi}$
by
\begin{align*}
\mathbf{E}_{u}(\alpha): & =\left\{ x\in\Rep\mid\exists K>0\;\forall n\in\N\;\left|\FI_{\,\Psi}^{n}(x)-n\alpha\right|\leq K\right\} .
\end{align*}

As stated above, the following result proves our statement on the
occurrence of a dimension gap.
\begin{thm}
\label{thm:-dimension gap} We have $\delta_{\alpha}=\delta$ if and
only if $\mu_{\delta\varphi}(\psi)=\alpha$. In particular, a dimension
gap occurs if and only if $\mu_{\delta\varphi}(\psi)\neq0$.
\end{thm}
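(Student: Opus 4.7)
The plan is to leverage the two pressure identities established earlier in the paper. By Theorem~\ref{thm:critical exponent via base pressures}, the critical exponent $\delta_\alpha$ is characterised as the unique zero of the map $s\mapsto\mathcal{P}(s\varphi,\psi-\alpha)$. Combined with Theorem~\ref{thm:fibre-induced pressure via base pressure}, which expresses the fibre-induced pressure as the infimum of classical pressures, this says that $\delta_\alpha$ is the unique $s\geq 0$ satisfying
$$\inf_{t\in\R}\mathfrak{P}\bigl(t(\psi-\alpha)+s\varphi\bigr)=0.$$

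The next step is to evaluate this infimum at the distinguished value $s=\delta$. Since $\mathfrak{P}(\delta\varphi)=0$ by the very definition of $\delta$, plugging in $t=0$ immediately yields $\inf_{t\in\R}\mathfrak{P}(t(\psi-\alpha)+\delta\varphi)\leq 0$. The map $g\colon t\mapsto\mathfrak{P}(t(\psi-\alpha)+\delta\varphi)$ is convex on $\R$ and, because $\psi$ and $\varphi$ are H\"older continuous, real-analytic. Standard thermodynamic formalism identifies the derivative $g'(0)=\mu_{\delta\varphi}(\psi-\alpha)=\mu_{\delta\varphi}(\psi)-\alpha$, where $\mu_{\delta\varphi}$ is the (unique) equilibrium state of $\delta\varphi$. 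Convexity then gives a clean dichotomy: $t=0$ is a global minimiser of $g$ precisely when $g'(0)=0$, and if $g'(0)\neq 0$ then $\inf g<g(0)=0$. Hence $\inf_{t\in\R}\mathfrak{P}(t(\psi-\alpha)+\delta\varphi)=0$ if and only if $\mu_{\delta\varphi}(\psi)=\alpha$.

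To conclude, I would use that $s\mapsto\mathcal{P}(s\varphi,\psi-\alpha)$ is nonincreasing: for every fixed $t$ the function $s\mapsto\mathfrak{P}(t(\psi-\alpha)+s\varphi)$ is decreasing in $s$ because $\varphi<0$, and the infimum over $t$ preserves this monotonicity. Combining monotonicity with uniqueness of the zero $\delta_\alpha$ and the analysis of $g$: if $\mu_{\delta\varphi}(\psi)=\alpha$, then $\mathcal{P}(\delta\varphi,\psi-\alpha)=0$ forces $\delta_\alpha=\delta$; conversely, if $\mu_{\delta\varphi}(\psi)\neq\alpha$, then $\mathcal{P}(\delta\varphi,\psi-\alpha)<0$ and monotonicity gives $\delta_\alpha<\delta$. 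Specialising to $\alpha=0$ and invoking Theorem~\ref{thm:Bishop and Jones} (which identifies $\delta_0$ with the Hausdorff dimension of the recurrent set) yields the \emph{in particular} statement on the occurrence of a dimension gap.

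The main technical point to be careful about is the pressure-derivative identity $g'(0)=\mu_{\delta\varphi}(\psi-\alpha)$, which relies on real-analyticity of the classical pressure for H\"older potentials together with uniqueness of the equilibrium state of $\delta\varphi$; the degenerate situation in which $\psi-\alpha$ is cohomologous to a constant requires a brief separate inspection, but in that case $g$ is affine and both sides of the equivalence degenerate consistently (the constant value necessarily equals $\mu_{\delta\varphi}(\psi)$). Once these ingredients are in place, the entire proof is a short exercise in convex analysis applied to the two structural theorems transported from the general framework.
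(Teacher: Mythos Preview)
Your proof is correct and follows essentially the same route as the paper. The only difference is one of packaging: the paper invokes Corollary~\ref{cor:full pressure if no drift} (the criterion $\mathcal{P}(f,\psi)=\mathfrak{P}(f)\Leftrightarrow\mu_f(\psi)=0$) as a black box, whereas you unpack its proof inline via the convexity of $t\mapsto\mathfrak{P}(t\psi_\alpha+\delta\varphi)$ and the derivative identity at $t=0$; your monotonicity step likewise re-derives what the paper gets from Lemma~\ref{lem:strictly decreasing}.
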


\begin{thm}
[Multifractal decomposition with respect to $\alpha$-escaping] \label{thm: Multifractal Decomposition}
For $\alpha\in\R$ we have 
\[
\dim_{H}(\mathbf{E}(\alpha))=\dim_{H}(\mathbf{E}_{u}(\alpha))=\delta_{\alpha}.
\]
\end{thm}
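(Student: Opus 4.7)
The strategy is to establish the two inequalities $\dim_{H}(\mathbf{E}(\alpha)) \leq \delta_\alpha$ and $\dim_{H}(\mathbf{E}_u(\alpha)) \geq \delta_\alpha$, which together with the trivial containment $\mathbf{E}_u(\alpha) \subseteq \mathbf{E}(\alpha)$ yield the asserted equalities. Both escaping sets are $\Z$-translation invariant, and after removing the countable exceptional set $\mathcal{D} + \Z$ they can be studied symbolically, since $\psi$ is constant on one-cylinders and $\FI_{\,\Psi}$ is, by Lemma \ref{fac:factor}, topologically conjugate to $\sigma \rtimes \psi$. The case $\alpha \notin [\underline{\psi}, \overline{\psi}]$ is essentially trivial since then $\delta_\alpha = 0$ by (\ref{eq:delta_alpha=00003D0}).

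For the upper bound I would use a direct covering argument. Setting $\mathcal{W}_{n,K} := \{\omega \in I^n : |S_\omega(\psi - \alpha)| \leq K\}$, every $x \in \mathbf{E}(\alpha)$ lies, for some $K \in \N$, inside cylinders $\pi([\omega])$ with $\omega \in \mathcal{W}_{n,K}$ for infinitely many $n$. By the bounded distortion property $\diam \pi([\omega]) \asymp \e^{S_\omega \varphi}$, and for any $s > \delta_\alpha$ and fixed $K$ the defining series $\sum_{\omega \in I^\star,\, |S_\omega(\psi-\alpha)| \leq K} \e^{s S_\omega \varphi}$ converges by definition of $\delta_\alpha$. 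Hence the $s$-dimensional Hausdorff measure of the corresponding limsup set vanishes, and a countable union over $K$ and $\Z$-translates gives $\dim_{H}(\mathbf{E}(\alpha)) \leq \delta_\alpha$.

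For the lower bound I propose a Moran-like construction. Fix $s < \delta_\alpha$. By Theorem \ref{thm:critical exponent via base pressures}, the fibre-induced pressure satisfies $\mathcal{P}(s\varphi, \psi - \alpha) > 0$, so there exist $K, n_0$ and a finite family $\mathcal{A} \subseteq \mathcal{W}_{n_0, K}$ with $\sum_{\omega \in \mathcal{A}} \e^{s S_{n_0}\varphi(\omega)} \geq \rho$ for arbitrarily large $\rho$. The idea is to partition $\mathcal{A} = \mathcal{A}^+ \cup \mathcal{A}^-$ according to the sign of the deviation $d(\omega) := S_{n_0}\psi(\omega) - n_0\alpha$, and to build a Cantor set $C \subseteq \Sigma$ consisting of all concatenations $\omega^{(1)}\omega^{(2)}\cdots$ with $\omega^{(i)} \in \mathcal{A}^{\sigma_i}$, where the signs $\sigma_i \in \{+,-\}$ are chosen adaptively so that the running sum $\sum_{j \leq i} d(\omega^{(j)})$ is kept inside $[-K, K]$. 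Since $|d(\omega)| \leq K$ for each block, this is always possible, and the full Birkhoff deviation $S_N(\psi - \alpha)$ then remains bounded (by a constant depending only on $K$, $n_0$ and $\|\psi - \alpha\|_\infty$) along every prefix, not only at block boundaries. Thus $\pi(C) \subseteq \mathbf{E}_u(\alpha)$. Equipping $C$ with the natural product-type measure assigning each block $\omega$ mass proportional to $\e^{s S_{n_0}\varphi(\omega)}$ and invoking the mass distribution principle together with bounded distortion yields $\dim_{H}(\pi(C)) \geq s$, and letting $s \nearrow \delta_\alpha$ completes the proof.

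The principal technical obstacle is the adaptive sign-choice step: one must guarantee that both $\mathcal{A}^+$ and $\mathcal{A}^-$ retain enough words so that restricting the next block to a prescribed sign costs only a multiplicative factor independent of the step, so that the full exponential growth rate $s$ is still realised. For $\alpha$ in the interior of $[\underline{\psi}, \overline{\psi}]$ this should follow from Theorem \ref{thm:critical exponent via base pressures} and the real-analytic unimodality of $\alpha \mapsto \delta_\alpha$, which forces the fibre-induced pressure to assign comparable weight to cylinders with positive and negative deviation. The boundary cases $\alpha \in \{\underline{\psi}, \overline{\psi}\}$ would require a separate treatment, either by a continuity argument as $\alpha$ approaches the boundary from the interior, or by checking directly that $\mathbf{E}_u(\alpha)$ is essentially empty there.
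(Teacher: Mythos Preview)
Your upper bound is correct and essentially identical to the paper's: a straightforward limsup covering argument using the convergence of the Poincar\'e series for $s>\delta_\alpha$.

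The lower bound, however, has a genuine gap in the sign--partition step, and your treatment of the boundary cases is wrong.

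\textbf{The sign--partition gap.} You split $\mathcal{A}=\mathcal{A}^{+}\cup\mathcal{A}^{-}$ and then, at each level, \emph{restrict} to whichever half corrects the running deviation. For the mass distribution principle you need, at every level, that the restricted family still carries $\e^{s\varphi}$--mass bounded below by a constant independent of the level. You have not shown that \emph{both} $\sum_{\omega\in\mathcal{A}^{+}}\e^{sS_{\omega}\varphi}$ and $\sum_{\omega\in\mathcal{A}^{-}}\e^{sS_{\omega}\varphi}$ are large; a priori almost all of the mass of $\mathcal{C}_{n_0}(\psi_\alpha,K)$ could sit on one sign. The appeal to ``real-analytic unimodality of $\alpha\mapsto\delta_\alpha$'' does not establish this: unimodality is a statement about the \emph{zero} of $s\mapsto\mathcal{P}(s\varphi,\psi_\alpha)$ as a function of $\alpha$, not about the distribution of signs of $S_{n_0}\psi_\alpha$ at a fixed $n_0$. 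If $\min\bigl(\sum_{\mathcal{A}^{+}},\sum_{\mathcal{A}^{-}}\bigr)$ is small, the adaptive restriction costs an uncontrolled exponential factor and the construction yields dimension strictly below $s$.

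The paper avoids this difficulty with a different mechanism. It takes \emph{all} of $\Gamma=\mathcal{C}_{\ell}(\psi_\alpha,\epsilon)$ as building blocks (no sign restriction), and after appending a block uses a \emph{correcting word} from a fixed finite set $\Lambda\subset I^{\star}$ (furnished by Lemma~\ref{lem:D}, an application of Atkinson's recurrence theorem) to bring the deviation from at most $2\epsilon$ back to at most $\epsilon$. Since $\Lambda$ is finite with bounded word lengths, the correcting step costs only a uniformly bounded multiplicative factor per level, which is absorbed by choosing $\ell$ (and hence $M$) large once. This sidesteps any balance requirement between positive and negative deviations.

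\textbf{The boundary cases.} Your suggestion that $\mathbf{E}_{u}(\alpha)$ is ``essentially empty'' for $\alpha\in\{\underline{\psi},\overline{\psi}\}$ is incorrect. Setting $I_{0}\coloneqq\{i\in I:\psi_\alpha(i,\ldots)=0\}$, one has $\pi(I_{0}^{\N})\subset\mathbf{E}_{u}(\alpha)$, and by Theorem~\ref{thm:fibre-induced pressure via base pressure} and Bowen's formula $\dim_{H}\pi(I_{0}^{\N})=\delta_\alpha$, which need not be zero (it is positive whenever $\card(I_0)\ge 2$). This is exactly how the paper handles the boundary; a continuity argument is not needed.
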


We note that $\delta_{\alpha}$ also allows for a multifractal spectral
interpretation of certain Birkhoff averages (cf.\ Remark \ref{rem:delta_=00005Calpha as Birkhoff average}).
However, the results of Theorem \ref{thm: Multifractal Decomposition}
need more subtle ideas adopted from the analysis of Kleinian groups
and we believe that this connection is also of independent interest.

\subsubsection{First examples and further consequences\label{subsec:First-examples}}

The previous theorems applied to Example \ref{exa:Classical Random Walk}
with fixed $c_{1,}c_{2}\in(0,1)$ lead to the following observations.
We determine the graph of $\alpha\mapsto\dim_{H}\left(\mathbf{E}\left(\alpha\right)\right)=\dim_{H}(\mathbf{E}_{u}(\alpha))=\delta_{\alpha}(c_{1},c_{2})$
for the random walk model, see Figure \ref{fig:asymmetric-Random-Walk-Spectrum}.
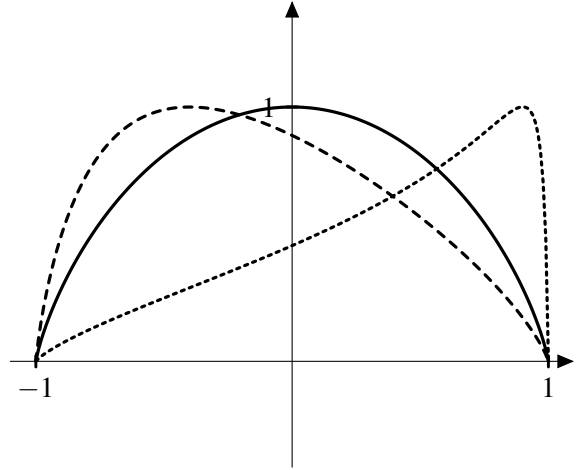
\begin{figure}[h]
\pgfplotsset{width=9cm,compat=1.9}
\begin{tikzpicture}[line cap=round,line join=round,>=triangle 45]
\begin{axis}[xmin=-1.1,xmax=1.1,     axis equal,     axis lines=middle,     axis line style={->},     tick style={color=black,line width=1.1pt},     xtick={-1,0,1}, xticklabels={$-1$,$0$,$1$}, ytick={0,1},yticklabels={$0$, $1$} ]     
\addplot 
[         domain=-1:1, samples=197,line width=1.2pt,  
] {(-ln(4.0)+(1.0+(x))*ln(1.0+(x))+(1.0-(x))*ln(1.0-(x)))/((1.0+(x))*ln(0.5)+(1.0-(x))*ln(1.0-0.5))}; 
\addplot 
[         domain=-1:1, samples=197,line width=1.2pt, dashed 
] {(-ln(4.0)+(1.0+(x))*ln(1.0+(x))+(1.0-(x))*ln(1.0-(x)))/((1.0+(x))*ln(0.3)+(1.0-(x))*ln(1.0-0.3))}; 
\addplot 
[         domain=-1:1, samples=197,line width=1.2pt,  dotted
] {(-ln(4.0)+(1.0+(x))*ln(1.00000+(x))+(1.0-(x))*ln(1.0000-(x)))/((1.0+(x))*ln(0.95)+(1.0-(x))*ln(1.0-0.95))}; 
\end{axis}
 \end{tikzpicture}\caption{The escape rate spectrum $\alpha\protect\mapsto\delta_{\alpha}(c_{1},c_{2})$
for the random walk model for different values of $c_{1}=0.5$ (solid
line, symmetric case), $c_{1}=0.3$ (dashed line), and $c_{1}=0.9$
(dotted line) and $c_{2}=1-c_{1}$. \label{fig:asymmetric-Random-Walk-Spectrum}}
\end{figure}

Moreover, the Hausdorff dimension of the (uniformly) recurrent set
for the random walk model given by Theorem \ref{thm:Bishop and Jones}
is 
\[
\delta_{0}\left(c_{1},c_{2}\right)=\frac{\log4}{\log(1/c_{1})+\log(1/c_{2})},
\]
see Figure \ref{fig:delta_0Graph} for a one-parameter plot of $c\mapsto\delta_{0}\left(c,1-c\right)$.
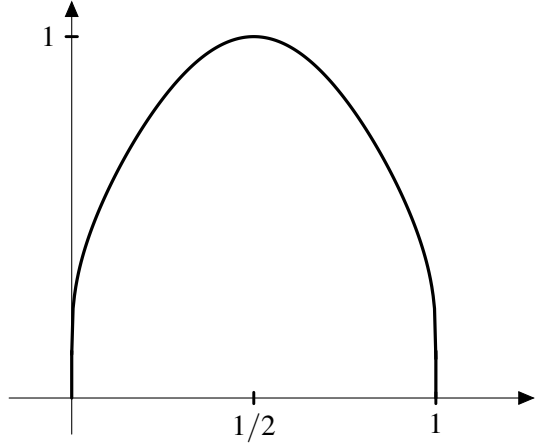
\begin{figure}[h]
\pgfplotsset{width=8.5cm,compat=1.9}
\begin{tikzpicture}[line cap=round,line join=round,>=triangle 45]
\begin{axis}[ymin=-0.1,ymax=1.1,xmin=0,xmax=1.1,     axis equal,     axis lines=middle,     axis line style={->},     tick style={color=black,line width=1.1pt},     xtick={0,0.5,1}, xticklabels={$0$,$1/2$,$1$}, ytick={0,1},yticklabels={$0$, $1$} ]     

\addplot 
[         domain=0:1, samples=270,line width=1.2pt,   
] {-ln(4.0)/ ln(x*(1-x)+0.00001)};
\addplot 
[         domain=0:0.13, samples=5,line width=1.2pt,   
] (1,x);
\addplot [        domain=0:0.13, samples=5,line width=1.2pt,   
] (0,x);
\end{axis}
 \end{tikzpicture}

\caption{The dimension function $c\protect\mapsto\delta_{0}\left(c,1-c\right)$
of the (uniformly) recurrent set for the random walk model parametrised
by contraction rates $\left(c,1-c\right)$ with $c\in\left(0,1\right)$.
A dimension gap does occur for all $c\protect\neq1/2$.\label{fig:delta_0Graph}}
\end{figure}

As an extension of Example \ref{exa:Classical Random Walk} we will
consider asymmetric step widths and interval maps with more than two
branches. The corresponding calculations and precise formulas are
postponed to Section \ref{sec:Examples}.

We end the introduction by providing an application of our results
to the above mentioned $\Z$-extensions of Kleinian groups. More precisely,
we partially recover a result of \cite{rees_1981} and provide a direct
alternative proof, see end of Section \ref{sec:Examples}. Recall
that a group $G$ is of \emph{divergence type} if the series in (\ref{eq:definition Poincare exponent group})
is infinite for the critical exponent $\delta_{G}$.
\begin{cor}
\label{thm:Kleingroup divergence type}If $G=N\rtimes\Z$ is a Schottky
group, then $\delta_{N}=\delta_{G}$ and $N$ is of divergence type.
\end{cor}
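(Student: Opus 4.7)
The plan is to encode the Schottky group $G=N\rtimes\Z$ into the skew-periodic framework developed above. Via a Bowen--Series coding of the $G$-action on its boundary one obtains an expanding interval map $\FI$ on a finite union of intervals whose repeller $\pi(\Sigma)$ models $\Lambda(G)$, together with a locally constant step-length function $\Psi$ induced by the homomorphism $G\to G/N\cong\Z$. Under this dictionary $\dim_H(\Lambda(G))=\delta_G$ matches $\delta=\dim_H(\pi(\Sigma))$ via Bowen's formula \eqref{eq:Hausdorff dim Bowen formula}, while the radial limit set $\Lambda_r(N)$ is, up to a countable set, the recurrent set $\mathbf{R}$ of $\FI_{\,\Psi}$. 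Theorem~\ref{thm:Bishop and Jones} then yields
\[
\delta_N=\dim_H(\Lambda_r(N))=\dim_H(\mathbf{R})=\delta_0.
\]

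To upgrade this to $\delta_N=\delta_G$ I would verify the drift condition $\mu_{\delta\varphi}(\psi)=0$ and invoke Theorem~\ref{thm:-dimension gap}. Concretely, the Gibbs measure $\mu_{\delta\varphi}$ corresponds, under the symbolic coding, to the $\delta_G$-conformal Patterson--Sullivan measure $\nu$ on $\Lambda(G)$. Since $\nu$ is $G$-equivariant and $N$ is normal in $G$, pushing $\nu$ forward along $\chi\colon G\to G/N\cong\Z$ (realised on cylinders of $\Sigma$ by the cocycle $\psi$) produces a measure class that is stationary under the $\Z$-translation coming from the quotient. The amenability of $\Z$ together with the reversibility built into the Patterson construction (the involution $g\mapsto g^{-1}$ preserves the Poincar\'e series and flips the sign of $\chi$) forces the mean of $\psi$ under $\mu_{\delta\varphi}$ to vanish. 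Making this symbolic reversibility precise in the Bowen--Series coding is the \emph{main technical obstacle}, since the coding is only a Markov partition rather than the full group orbit; however the same argument that is used to set up the Patterson measure for the base suffices. Once $\mu_{\delta\varphi}(\psi)=0$ is established, Theorem~\ref{thm:-dimension gap} gives $\delta_0=\delta$, and combined with the previous paragraph $\delta_N=\delta_G$.

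For the divergence statement, I would appeal to Theorem~\ref{thm:fibre-induced pressure via base pressure}: since $\mathfrak{P}(\delta\varphi)=0$ and the drift vanishes, the infimum
\[
\mathcal{P}(\delta\varphi,\psi)=\inf_{t\in\R}\mathfrak{P}(t\psi+\delta\varphi)
\]
is attained at $t=0$, i.e.\ the fibre-induced critical exponent $\delta_0$ equals $\delta$ and is pinned by a nondegenerate minimum of $t\mapsto\mathfrak{P}(t\psi+\delta\varphi)$ (nondegenerate because the asymptotic variance of $\psi$ under $\mu_{\delta\varphi}$ is positive, as $\psi$ is not a $\sigma$-coboundary for a nontrivial $\Z$-extension). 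The Poincar\'e series of $N$ at $s=\delta_N$ translates, via the coding, into the fibre-restricted sum
\[
\sum_{\substack{\omega\in I^\star\\ \lvert S_\omega\psi\rvert\le K}}\e^{\delta S_\omega\varphi},
\]
which is precisely the series appearing in the definition \eqref{eq:definition Poincare exponent} of $\delta_\alpha$ at $\alpha=0$. Divergence of this series at the critical exponent $s=\delta$ is then a local-limit-theorem estimate: the finite expanding base is of divergence type, so $\sum_{|\omega|=n}\e^{\delta S_\omega\varphi}$ stays bounded away from zero, and a local CLT for $S_n\psi$ under $\mu_{\delta\varphi}$ (with zero drift and positive variance) produces mass of order $n^{-1/2}$ concentrated on $\{\lvert S_n\psi\rvert\le K\}$, whose sum over $n$ diverges. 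This local-limit input is the crux of the argument, and supplies the divergence type of $N$.
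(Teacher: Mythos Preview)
Your strategy is the paper's: encode $G$ symbolically, identify $\delta_G=\delta$ and $\delta_N=\delta_0$, show $\mu_{\delta\varphi}(\psi)=0$, and then read off both $\delta_0=\delta$ and divergence. Two points of comparison are worth making.

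For the zero-drift step you correctly isolate the obstacle but leave it vague. The paper dispatches it with a short explicit proposition: on the subshift $\Sigma_G$ with alphabet $\{\pm1,\dots,\pm k\}$, the $\sigma$-invariant Patterson--Sullivan measure satisfies $\mu([i])=\mu([-i])$ for every $i$. The proof uses the $G$-invariance of the geodesic current $|\xi-\eta|^{-2\delta_G}\,dm(\xi)\,dm(\eta)$ together with the observation that $g_i$ maps $\pi[i]\times\bigl(\bigcup_{j\ne i}\pi[j]\bigr)$ to $\bigl(\bigcup_{j\ne -i}\pi[j]\bigr)\times\pi[-i]$. Since the relevant $\psi$ is $+1$ on one generator cylinder, $-1$ on its inverse, and $0$ elsewhere, this symmetry gives $\mu_{\delta\varphi}(\psi)=0$ immediately---no stationarity or amenability argument is needed.

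For divergence you propose a local limit theorem to produce mass of order $n^{-1/2}$ on $\{|S_n\psi|\le K\}$. That would work, but the paper takes a shorter route already built in: the final assertion of Theorem~\ref{thm:fibre-induced pressure via base pressure} (proved via Lemma~\ref{lem:nodrift implies recurrence} and Borel--Cantelli, with no quantitative rate) gives $\sum_{\omega\in\mathcal{C}(K)}\e^{\delta S_\omega\varphi}=\infty$ directly once $\mathcal{P}(\delta\varphi,\psi)=0$. So divergence of $N$ follows without any CLT input. One minor technicality applies to both routes: the Schottky coding lives on a mixing subshift of finite type rather than a full shift, and the paper explicitly remarks that the needed parts of Theorem~\ref{thm:fibre-induced pressure via base pressure} and Corollary~\ref{cor:full pressure if no drift} extend to that setting.
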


\section{Preliminaries and basic definitions\label{sec:Preliminaries-and-basic}}

Let $I$ be a finite set, $I^{\star}\coloneqq\bigcup_{k=1}^{\infty}I^{k}\cup\left\{ \emptyword\right\} $
the set of all finite words over $I$ containing the \emph{empty word}
$\emptyword$ and set $\Sigma\coloneqq I^{\N}$. For $\omega\in I^{\star}$
we denote by $|\omega|$ the unique $k\in\N$ such that $\omega\in I^{k},$
and we refer to $|\omega|$ as the \emph{length} of $\omega$. Note
that $\emptyword$ is the unique word of length zero. For $\omega=\left(\omega_{1},\ldots,\omega_{n}\right)\in I^{n}$
and $1\leq k\leq n$, or $\omega\in\Sigma$ and $k\in\N$ resp., we
set $\omega|_{k}\coloneqq\left(\omega_{1},\ldots,\omega_{k}\right)$.
For $\omega=\left(\omega_{1},\ldots,\omega_{n}\right)\in I^{n}$ and
$\nu=\left(\nu_{1},\ldots,\nu_{k}\right)\in I^{k}$, or $v\in\Sigma$
resp., we define the concatenation $\omega\nu\coloneqq\left(\omega_{1},\ldots,\omega_{n},\nu_{1},\ldots,\nu_{k}\right)$,
or $\omega\nu\coloneqq\left(\omega_{1},\ldots,\omega_{n},\nu_{1},\nu_{2},\ldots\right)$.
We denote by $\sigma:\Sigma\rightarrow\Sigma$ the (left) shift map
given by $\sigma(\omega)_{i}:=\omega_{i+1}$ for every $i\in\N$.
We endow $\Sigma$ with the metric $d(\omega,\tau):=\exp(-\max\left\{ k\ge0\mid\omega_{1}=\tau_{1},\dots,\omega_{k}=\tau_{k}\right\} )$.
In this way we obtain $\left(\Sigma,\sigma\right)$ a continuous dynamical
system over the compact metric space $(\Sigma,d)$.

Recall the definition of a skew-periodic interval map $\FI_{\,\Psi}$
with $\Psi:[0,1]\to\Z$ and of a skew product dynamical system\emph{
}$\left(\Sigma\times\R,\sigma\rtimes f\right)$ with $f:\Sigma\to\R$,
see (\ref{eq:definition skew product dynamical system}).\textcolor{red}{{}
}Let us now give the precise definition of the auxiliary countable
set, where we set $h_{\emptyset}\coloneqq id|_{\left[0,1\right]}$,
\begin{equation}
\mathcal{D}\coloneqq\bigcup_{\left(\omega_{1},\ldots,\omega_{n}\right)\in I^{\star}}h_{\omega_{1}}\circ\cdots\circ h_{\omega_{n}}\left(\left\{ 0,1\right\} \right).\label{eq:D}
\end{equation}
Note that $\pi^{-1}(\mathcal{D})$ is the set of all sequences in
$\Sigma$ which are eventually constant.
\begin{lem}
\label{fac:factor}$\FI_{\,\Psi}$ and $\left.\sigma\rtimes\left(\Psi\circ\pi\right)\right|_{\Sigma\backslash\pi^{-1}(\mathcal{D})\times\Z}$
are topological conjugate.
\end{lem}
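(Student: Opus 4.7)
The natural candidate for the conjugacy is the map
\begin{equation*}
h\colon\left(\Sigma\setminus\pi^{-1}(\mathcal{D})\right)\times\Z\to\Rep,\qquad h(\omega,k)\coloneqq\pi(\omega)+k.
\end{equation*}
The first task is to show that $h$ is a well-defined bijection. Since $\{0,1\}\subseteq\mathcal{D}$, any $\omega\notin\pi^{-1}(\mathcal{D})$ satisfies $\pi(\omega)\in(0,1)$, so surjectivity is immediate from $\Rep=\pi(\Sigma)\setminus\mathcal{D}+\Z$ and the decomposition of an element of $\Rep$ into its integer and fractional parts pins down the $\Z$-coordinate uniquely. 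Injectivity in the symbolic coordinate then reduces to the claim that $\pi$ is injective on $\Sigma\setminus\pi^{-1}(\mathcal{D})$; to show this I would suppose $\omega\neq\omega'$ have a common image and let $n$ be the first index of disagreement. Applying $H_{\omega_1}\circ\cdots\circ H_{\omega_{n-1}}$ reduces the situation to $n=1$, in which case the common image lies in $\overline{I_{\omega_1}}\cap\overline{I_{\omega'_1}}$; since the $I_i$ are pairwise disjoint with non-empty interior, this intersection is at most a single boundary point of the form $h_j(\varepsilon)$ with $\varepsilon\in\{0,1\}$, contradicting $\pi(\omega)\notin\mathcal{D}$.

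For the homeomorphism property, $h$ is continuous because $\pi$ is. For the inverse I would argue by compactness: given $y_n\to y$ in $\pi(\Sigma)\setminus\mathcal{D}$, any convergent subsequence of the codes $\pi^{-1}(y_n)$ has limit $\omega^{\star}$ with $\pi(\omega^{\star})=y\notin\mathcal{D}$, so $\omega^{\star}\notin\pi^{-1}(\mathcal{D})$ and by the injectivity above $\omega^{\star}=\pi^{-1}(y)$; since every accumulation point is the same, the full sequence converges. The integer part is continuous on $\Rep\subseteq\R\setminus\Z$, so $h^{-1}$ is continuous.

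The decisive step is the intertwining identity. Fix $(\omega,k)\in(\Sigma\setminus\pi^{-1}(\mathcal{D}))\times\Z$ and set $x\coloneqq h(\omega,k)=\pi(\omega)+k$. Because $\pi(\omega)\in(0,1)$, the defining sum for $\FI_{\,\Psi}(x)$ has a unique non-vanishing term corresponding to the index $k$, giving $\FI_{\,\Psi}(x)=k+\FI(\pi(\omega))+\Psi(\pi(\omega))$. Moreover, $\pi(\omega)\notin\mathcal{D}$ forces $\pi(\omega)\in\interior(I_{\omega_1})$, so the branch identity $\FI(\pi(\omega))=H_{\omega_1}(\pi(\omega))=\pi(\sigma\omega)$ applies and we conclude
\begin{equation*}
\FI_{\,\Psi}\bigl(h(\omega,k)\bigr)=\pi(\sigma\omega)+\bigl(k+\Psi(\pi(\omega))\bigr)=h\bigl(\sigma\omega,\,k+(\Psi\circ\pi)(\omega)\bigr),
\end{equation*}
which is precisely $\FI_{\,\Psi}\circ h=h\circ(\sigma\rtimes(\Psi\circ\pi))$. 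The main subtlety throughout is the bookkeeping of the exceptional set $\mathcal{D}$: one must also verify that $\sigma$ preserves $\Sigma\setminus\pi^{-1}(\mathcal{D})$, equivalently that $\FI$ sends $\pi(\Sigma)\setminus\mathcal{D}$ into itself, but this is transparent from the characterisation noted in the excerpt that $\pi^{-1}(\mathcal{D})$ consists exactly of the eventually constant sequences, a set manifestly invariant under the shift.
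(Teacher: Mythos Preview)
Your proof is correct and follows the same approach as the paper: both use the map $(\omega,k)\mapsto\pi(\omega)+k$ as the conjugacy and verify the intertwining identity $\FI_{\,\Psi}\circ h=h\circ(\sigma\rtimes(\Psi\circ\pi))$ via the relation $F\circ\pi=\pi\circ\sigma$ on $\Sigma\setminus\pi^{-1}(\mathcal{D})$. Your write-up is in fact considerably more complete than the paper's, which only records the intertwining computation and leaves bijectivity, the homeomorphism property, and the shift-invariance of $\Sigma\setminus\pi^{-1}(\mathcal{D})$ implicit.
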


\begin{proof}
Let us define the map $\tilde{\pi}:\Sigma\times\Z\to\R$ by $\tilde{\pi}\left(\omega,\ell\right)\coloneqq\pi\left(\omega\right)+\ell$
and restrict its domain to\textcolor{red}{{} }$\widetilde{\pi}^{-1}\left(\mathcal{R}\right)$
(which equals $\Sigma\backslash\pi^{-1}(\mathcal{D})\times\Z$). Then
for $\left(\omega,\ell\right)\in\widetilde{\pi}^{-1}\left(\mathcal{R}\right)$
we have
\[
\FI_{\,\Psi}(\tilde{\pi}\left(\omega,\ell\right))=\sum_{k\in\Z}\left(k+F(\pi\left(\omega\right)+\ell-k)+\Psi(\pi\left(\omega\right)+\ell-k)\right)\1_{[0,1]}(\pi\left(\omega\right)+\ell-k).
\]
Since $F\circ\pi=\pi\circ\sigma$ on $\Sigma\setminus\pi^{-1}\left(\mathcal{D}\right)$,
we have 
\[
\FI_{\,\Psi}(\tilde{\pi}\left(\omega,\ell\right))=\FI(\pi\left(\omega\right))+\ell+\Psi(\pi\left(\omega\right))=\tilde{\pi}(\sigma\rtimes\left(\Psi\circ\pi\right)\left(\omega,\ell\right)).\qedhere
\]
\end{proof}
For $n\in\N$ and $\omega\in I^{\star}$, we set
\begin{equation}
S_{n}f\coloneqq\sum_{k=0}^{n-1}f\circ\sigma^{k},\;S_{0}f\coloneqq0\quad\textnormal{as well as}\quad S_{\omega}f\coloneqq\sup_{x\in\left[\omega\right]}S_{|\omega|}f\left(x\right),\label{eq:birkhoff sum and max sum}
\end{equation}
where $\left[\omega\right]\coloneqq\left\{ x\in\Sigma:x|_{\left|\omega\right|}=\omega\right\} $
denotes the \emph{cylinder set} in $\Sigma$ over $\omega$. We say
$f:\Sigma\rightarrow\R$ is \emph{Hölder continuous} if there exists
$\alpha>0$ such that
\[
\sup_{\omega,\tau\in\Sigma}\left|f(\omega)-f(\tau)\right|/d(\omega,\tau)^{\alpha}<\infty.
\]
Note that by the Hölder continuity there exists a constant $D_{f}$
such that for all $n\in\N$ and $\omega\in I^{n}$ we have 
\begin{equation}
\sup_{x,y\in\left[\omega\right]}\left|S_{n}f\left(x\right)-S_{n}f\left(y\right)\right|\leq D_{f}.\label{eq:boundedDistortion}
\end{equation}
We recall an important result from \cite{MR0419727} adopted to our
situation.
\begin{lem}
\label{lem:nodrift implies recurrence} Suppose $\mu$ is a $\sigma$-invariant
ergodic Borel probability measure and $f:\Sigma\rightarrow\R$ is
$\mu$-integrable. We have
\[
\mu(f)=0\qquad\iff\qquad\liminf_{\ell\to\infty}\left|S_{\ell}f\right|=0\quad\mu\text{-a.e.}
\]
\end{lem}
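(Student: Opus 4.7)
The plan is to handle the two directions separately, with the reverse implication being an easy consequence of Birkhoff's ergodic theorem and the forward implication being the substantive content (classically due to Atkinson).

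For the implication $\liminf_{\ell\to\infty}|S_\ell f|=0$ a.e.\ $\Rightarrow\mu(f)=0$: I would argue by contraposition. Suppose $\mu(f)\ne 0$. By the Birkhoff ergodic theorem applied to the ergodic system $(\Sigma,\sigma,\mu)$ with $f\in L^1(\mu)$, we have $S_\ell f/\ell\to\mu(f)$ $\mu$-a.e., so $|S_\ell f|\to\infty$ $\mu$-a.e., contradicting $\liminf_\ell|S_\ell f|=0$. Hence $\mu(f)=0$.

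For the harder implication $\mu(f)=0\Rightarrow\liminf_\ell|S_\ell f|=0$ a.e., the strategy is to pass to the skew product
\[
T\colon\Sigma\times\R\to\Sigma\times\R,\qquad T(\omega,y)\coloneqq(\sigma\omega,y+f(\omega)),
\]
which preserves the $\sigma$-finite measure $\mu\otimes\lambda$, where $\lambda$ denotes Lebesgue measure on $\R$. The key step is to show that under the hypotheses $f\in L^1(\mu)$ and $\mu(f)=0$ the transformation $T$ is \emph{conservative}, i.e.\ admits no wandering set of positive measure. Once conservativity is established, the Poincaré recurrence theorem for $\sigma$-finite measures applied to the sets $\Sigma\times(-\varepsilon,\varepsilon)$ ($\varepsilon>0$ rational) yields that for $\mu$-a.e.\ $\omega$ and every $\varepsilon>0$ there are infinitely many $n$ with $|S_n f(\omega)|<\varepsilon$, whence $\liminf_n|S_n f(\omega)|=0$.

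The main obstacle is therefore the conservativity of $T$. The standard route, which is precisely the content of the cited reference \cite{MR0419727}, is to invoke the Hopf decomposition of $T$ into its conservative and dissipative parts and to rule out a non-trivial dissipative part. Assuming a wandering set $W\subset\Sigma\times\R$ of positive finite $\mu\otimes\lambda$-measure, one uses the integrability of $f$ together with $\mu(f)=0$ to derive that the Birkhoff averages $\frac{1}{n}S_n f$ become simultaneously small and force the orbit $\{T^k W\}$ to concentrate in a set of finite $\mu\otimes\lambda$-measure, contradicting their pairwise disjointness as $n\to\infty$. Since a clean self-contained argument requires the Hopf maximal ergodic theorem (or equivalently the Chacon--Ornstein ratio theorem), I would simply invoke Atkinson's result and limit the presentation to the reduction above, leaving the proof of conservativity as a reference.
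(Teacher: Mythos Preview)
Your proposal is essentially correct and rests on the same reference (Atkinson \cite{MR0419727}) as the paper; the detour through conservativity of the skew product and Poincar\'e recurrence is fine but not needed, since Atkinson's statement already gives $\liminf_\ell|S_\ell f|=0$ a.e.\ directly. There is, however, one technical point you pass over that the paper handles explicitly: Atkinson's theorem is formulated for an \emph{invertible} ergodic transformation on a non-atomic Lebesgue space, whereas the one-sided shift $\sigma:\Sigma\to\Sigma$ is not invertible. The paper deals with this by passing to the natural extension (the two-sided shift $I^{\Z}$), lifting $\mu$ to its unique $\sigma$-invariant extension $\widetilde{\mu}$, applying Atkinson to $\widetilde{f}\coloneqq f\circ h$ where $h$ is the canonical projection, and then transferring the conclusion back to the one-sided system. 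Your skew-product route would need the same adjustment, since the wandering-set/Hopf arguments you allude to are likewise standardly stated for automorphisms.

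The paper also singles out the atomic case: if $\mu$ has an atom, ergodicity and $\sigma$-invariance force $\mu=n^{-1}\sum_{k=0}^{n-1}\delta_{\sigma^k x}$ for some periodic point $x$ of period $n$, and then the equivalence is immediate (e.g.\ $\mu(f)=0$ means $S_n f(x)=0$, so $S_{kn}f(x)=0$ for all $k$). Your Birkhoff argument for the easy direction covers this case, but for the hard direction Atkinson's hypotheses (non-atomic) are not met, so this brief separate check is needed.
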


\begin{proof}
If $\mu$ is non-atomic, we apply the result from \cite{MR0419727}.
For this we consider the natural extension of $\left(\Sigma,\sigma\right)$
which is given by the two-sided shift $I^{\Z}$ making $\sigma:I^{\Z}\to I^{\Z}$
a bi-measurable map with respect to the unique $\sigma$-invariant
measure $\widetilde{\mu}$ on $I^{\Z}$ such that $\mu=\widetilde{\mu}\circ h^{-1}$.
Here, $h$ denotes the canonical projection from $I^{\Z}$ to $\Sigma$.
Then we can transfer the corresponding result from \cite{MR0419727}
for $\widetilde{f}\coloneqq f\circ h$ back to our situation.

If $\mu$ has atoms, then by $\sigma$-invariance and ergodicity we
in fact have $\mu=n^{-1}\sum_{k=0}^{n-1}\delta_{\sigma^{k}x}$, where
$x\in\Sigma$ is some periodic point with period $n$. With this at
hand the equivalence is obvious.
\end{proof}
The following fact puts the previous lemma into the context of infinite
ergodic theory.
\begin{fact*}[{\cite[Theorem 5.5]{MR0578731}}]
Let $\mu$ be a $\sigma$-invariant Borel probability measure and
assume that $f:\Sigma\rightarrow\R$ is $\mu$-integrable. Then the
$(\sigma\rtimes f)$-invariant measure $\mu\times\lambda$ (with $\lambda$
the Lebesgue measure on $\R$) is conservative if and only if $\mu$-a.e.\ $\liminf_{\ell\to\infty}\left|S_{\ell}f\right|=0$.
\end{fact*}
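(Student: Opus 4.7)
My plan proceeds in three stages: first reduce to the $\sigma$-ergodic case via ergodic decomposition of $\mu$ (both conservativity of $\mu\times\lambda$ and the almost everywhere $\liminf$ condition respect such a decomposition); second, establish a Hopf-type dichotomy for the skew product $T = \sigma\rtimes f$; third, determine which of the two alternatives occurs via Birkhoff's ergodic theorem and Lemma \ref{lem:nodrift implies recurrence}.

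The dichotomy step asserts that, for $\mu$ ergodic, the system $(\Sigma\times\R, T, \mu\times\lambda)$ is either conservative or totally dissipative. The Hopf decomposition $\Sigma\times\R = C\sqcup D$ has both parts $T$-invariant with $T|_C$ conservative and $T|_D$ totally dissipative. Since each fibrewise translation $\tau_c : (\omega, x)\mapsto(\omega, x+c)$ preserves $\mu\times\lambda$ and commutes with $T$, the partition $\tau_c(C)\sqcup\tau_c(D)$ is another valid Hopf decomposition, so by essential uniqueness $\tau_c(C) = C$ modulo null sets for every $c\in\R$. A measurable subset of $\Sigma\times\R$ invariant under all $\tau_c$ must be of the form $A\times\R$ modulo null (a Steinhaus-type argument on the fibres), so $C = A_C\times\R$ for some measurable $A_C\subset\Sigma$; the identity $T^{-1}(C) = C$ then forces $\sigma^{-1}(A_C) = A_C$, and $\sigma$-ergodicity of $\mu$ yields $\mu(A_C)\in\{0,1\}$.

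In the third stage I would match the two alternatives to $\mu(f)$. If $\mu(f)\neq 0$, Birkhoff's theorem gives $|S_n f(\omega)|\to\infty$ for $\mu$-a.e.\ $\omega$, so the fibre coordinate of $T^n(\omega, x) = (\sigma^n\omega, x+S_n f(\omega))$ diverges and no point returns to $\Sigma\times(0,1)$ infinitely often; the dichotomy then forces total dissipativity. If $\mu(f) = 0$, Lemma \ref{lem:nodrift implies recurrence} yields $\liminf_n|S_n f(\omega)| = 0$ for $\mu$-a.e.\ $\omega$, so for any such $\omega$ and any $x\in(0,1)$ one has $|S_n f(\omega)| < \min(x, 1-x)$, and hence $T^n(\omega, x)\in\Sigma\times(0,1)$, for infinitely many $n$; this precludes total dissipativity, and the dichotomy forces conservativity. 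Finally, applying Lemma \ref{lem:nodrift implies recurrence} once more to translate $\mu(f) = 0$ into the $\liminf$ condition completes the equivalence.

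The main obstacle will be the dichotomy of the second stage---in particular, the translation invariance of the Hopf decomposition and the resulting fibred structure $C = A_C\times\R$. This relies on essential uniqueness of the Hopf decomposition and on a measure-theoretic lemma for $\R$-translation-invariant subsets of $\Sigma\times\R$ (handled via a countable-dense $\Q$-invariance reduction plus Lebesgue density). A secondary technicality is the ergodic-decomposition step, where one needs to verify the compatibility of both sides of the equivalence with the decomposition of $\mu$ into ergodic components; this is routine but requires some care with null sets.
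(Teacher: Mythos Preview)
The paper does not prove this statement; it is quoted verbatim as a fact from Schmidt's monograph \cite[Theorem~5.5]{MR0578731} and serves only to place Lemma~\ref{lem:nodrift implies recurrence} in the context of infinite ergodic theory. There is therefore no argument in the paper to compare yours against.

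That said, your outline is essentially the standard route to Schmidt's theorem and is correct. Two points of execution are worth tightening. First, in stage~2, the passage from ``$\tau_c(C)=C$ mod null for each fixed $c$'' to ``$C=A_C\times\R$ mod null'' is, as you note, the delicate step; your $\Q$-invariance plus Lebesgue-density route is the right fix, but it should be spelt out. Second, in stage~3 the claim that recurrence of $\Sigma\times(0,1)$ to itself ``precludes total dissipativity'' needs one more sentence: since $\Sigma\times(0,1)$ has finite measure and almost every point returns, the first-return map is a finite-measure-preserving transformation and hence conservative by Poincar\'e; any wandering $W\subset\Sigma\times(0,1)$ of positive measure would then have a.e.\ point return to $W$ under the induced map and hence under $T$, a contradiction. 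This shows $\Sigma\times(0,1)\subset C$ mod null, so $C\neq\varnothing$, and your dichotomy finishes.

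Finally, note that your argument uses Lemma~\ref{lem:nodrift implies recurrence} (Atkinson's theorem) as a black box in the zero-drift case, so you are in effect deducing Schmidt's criterion from Atkinson's. This is not circular---the paper proves Lemma~\ref{lem:nodrift implies recurrence} independently via \cite{MR0419727}---but it means your proof is not self-contained in the way Schmidt's original is.
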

Next, let us recall the classical thermodynamical formalism for full
shifts. On the compact subspace $J^{\N}\subset\Sigma$ the \emph{classical
topological pressure} $\mathfrak{P}(f,J)$ of the continuous \emph{potential}
$f:\Sigma\rightarrow\R$ is given by 
\[
\ \mathfrak{P}(f,J)\coloneqq\lim_{n\rightarrow\infty}\frac{1}{n}\log\sum_{\omega\in J^{n}}\exp\left(S_{\omega}f\right)\qquad\text{and }\qquad\mathfrak{P}(f)\coloneqq\mathfrak{P}(f,I),
\]
in here the limit always exists (see for example \cite{MR648108}).
If $f$ is Hölder continuous and if $J\subset I$ is not a singleton,
then there exists a unique $\sigma$-invariant Borel probability measure
$\mu_{f,J}$ on $J^{\N}$--~called the \emph{Gibbs measure} (for
$f$ restricted to $J^{\N}$)~-- fulfilling the \emph{Gibbs property},
that is, there exists $c\geq1$ such that for all $n\in\N$, $\omega\in J^{n}$
and $x\in[\omega]\cap J^{\N}$ we have
\begin{equation}
c^{-1}\leq\frac{\mu_{f,J}\left(\left[\omega\right]\right)}{\exp(S_{n}f(x)-n\mathfrak{P}(f,J))}\leq c.\label{eq:Gibbs}
\end{equation}
  If $J=\left\{ i\right\} $, then $\mu_{f,J}$ denotes the Dirac
measure $\delta_{x}$ on the constant sequence $x=\left(i,i,\ldots\right)$.
Moreover, for $J=I$ we set $\mu_{f}\coloneqq\mu_{f,I}$.

\section{Fibre-induced Pressure and recurrence properties\label{sec:Fibre-induced-Pressure}}

In this section we will always assume that $\psi,f:\Sigma\rightarrow\R$
are Hölder continuous functions and let 
\begin{equation}
\underline{\psi}\coloneqq\inf_{\omega\in\Sigma}\liminf_{n\rightarrow\infty}S_{n}\psi\left(\omega\right)/n\quad\text{and}\quad\overline{\psi}\coloneqq\sup_{\omega\in\Sigma}\limsup_{n\rightarrow\infty}S_{n}\psi\left(\omega\right)/n.\label{eq:Psiupperlower}
\end{equation}
This gives rise to a skew product dynamical system $\sigma\rtimes\psi:\Sigma\times\R\to\Sigma\times\R$
and we aim at defining the new notion of fibre-induced pressure for
functions $f\circ\pi_{1}:\Sigma\times\R\to\R$ depending only on the
first coordinate where $\pi_{1}:\Sigma\times\R\rightarrow\Sigma$.
Recall that $D_{\text{\ensuremath{\psi}}}$ denotes the distortion
constant of $\psi$ defined in (\ref{eq:boundedDistortion}).

For $K>0$ and $n\in\N$ we define 
\[
\mathcal{C}_{n}(K)\coloneqq\mathcal{C}_{n}\left(\psi,K\right)\coloneqq\left\{ \omega\in I^{n}\mid\left|S_{\omega}\psi\right|\leq K\right\} ,\;\mathcal{C}\left(K\right)\coloneqq\mathcal{C}\left(\psi,K\right)\coloneqq\bigcup_{n\in\N}\mathcal{C}_{n}\left(K\right),
\]
\[
\zeta_{n}(f,\psi,K)\coloneqq\sum_{\omega\in\mathcal{C}_{n}(K)}\e^{S_{\omega}f}
\]
as well as 
\[
\mathcal{P}(f,\psi,K)\coloneqq\limsup_{n\rightarrow\infty}\frac{1}{n}\log\zeta_{n}(f,\psi,K)\quad\text{and }\;\mathcal{P}(f,\psi)\coloneqq\lim_{K\to\infty}\mathcal{P}(f,\psi,K).
\]
We will call $\mathcal{P}(f,\psi)$ the\emph{ fibre-induced pressure}
of $f$ with respect to $\psi.$
\begin{thm}
[Fibre-induced vs.\ classical pressure] \label{thm:fibre-induced pressure via base pressure}Let
$\psi,f:\Sigma\rightarrow\R$ be Hölder continuous functions and fix
$K>D_{\text{\ensuremath{\psi}}}$. If $0\in\big(\underline{\psi},\overline{\psi}\big)$,
then there exists a unique number $t(f)\in\R$ with $\int\psi\,\,d\mu_{t(f)\psi+f}=0$.
For this number we have 
\begin{equation}
\mathcal{P}(f,\psi,K)=\mathfrak{P}(t(f)\psi+f)=\min_{s\in\R}\mathfrak{P}(s\psi+f).\label{eq:InducedPressureViaMinClassicalPressure}
\end{equation}
Further, if we assume that $\psi$ is constant on one-cylinder sets,
allowing also for $0\notin\big(\underline{\psi},\overline{\psi}\big)$,
we have 
\[
\mathcal{P}(f,\psi,K)=\inf_{s\in\R}\mathfrak{P}(s\psi+f)
\]
and the value of the fibre-induced pressure will be finite if and
only if $0\in\big[\underline{\psi},\overline{\psi}\big]$. For $0\in\{\underline{\psi},\overline{\psi}\}$
and setting $I_{0}\coloneqq\left\{ i\in I\colon\psi\left(i,\ldots\right)=0\right\} $
we have in this situation 
\[
\mathcal{P}(f,\psi,K)=\mathfrak{P}(f,I_{0}).
\]
 In any case if $0\in\big(\underline{\psi},\overline{\psi}\big)$
or $\psi$ is constant on one-cylinder sets and $0\in\big[\underline{\psi},\overline{\psi}\big]$,
then we have 
\[
\sum_{\omega\in\mathcal{C}(K)}\e^{S_{\omega}f-|\omega|\mathcal{P}(f,\psi,K)}=\infty.
\]
\end{thm}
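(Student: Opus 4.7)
The plan is to exploit the convex-analytic structure of $s \mapsto F(s) := \mathfrak{P}(s\psi+f)$. By the variational principle together with standard real-analyticity of the pressure for H\"older potentials, $F$ is real-analytic and convex with $F'(s) = \int\psi\,d\mu_{s\psi+f}$, and $\lim_{s\to-\infty}F'(s) = \underline{\psi}$, $\lim_{s\to+\infty}F'(s) = \overline{\psi}$. Under $0 \in (\underline{\psi},\overline{\psi})$ this forces existence and uniqueness of $t(f)$ with $F'(t(f)) = 0$, which is then the global minimiser.

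The upper bound $\mathcal{P}(f,\psi,K)\leq\mathfrak{P}(s\psi+f)$ (for every $s\in\R$) is immediate: on $\omega\in\mathcal{C}_n(K)$ one has $|S_\omega\psi|\leq K$, so by bounded distortion $e^{S_\omega f}\leq C(s,K)\, e^{S_\omega(s\psi+f)}$, and extending the sum to all of $I^n$ recovers the classical partition function.

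For the lower bound and the series divergence, my plan is to extract both simultaneously from the Gibbs measure $\mu := \mu_{t(f)\psi+f}$. Bounded distortion and the Gibbs property give, uniformly in $\omega\in I^n$,
$$ \mu([\omega]) \asymp e^{-t(f)\,S_\omega\psi}\cdot e^{S_\omega f - n\mathfrak{P}(t(f)\psi+f)}. $$
Restricting to $\mathcal{C}_n(K)$ traps $e^{-t(f)S_\omega\psi}$ in $[e^{-|t(f)|K},e^{|t(f)|K}]$, so $\zeta_n(f,\psi,K) \asymp e^{n\mathfrak{P}(t(f)\psi+f)}\,\mu(A_n)$, where $A_n := \bigcup_{\omega\in\mathcal{C}_n(K)}[\omega]$. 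Since $\int\psi\,d\mu = 0$, Lemma~\ref{lem:nodrift implies recurrence} yields $\liminf_n|S_n\psi|=0$ $\mu$-a.e.; because $K>D_\psi$, this puts $\mu$-a.e.\ point into $A_n$ for infinitely many $n$, so $\mu(\limsup_n A_n) = 1$. The contrapositive of the first Borel--Cantelli lemma then gives $\sum_n\mu(A_n)=\infty$, which simultaneously produces the series divergence and forces $\limsup_n\tfrac{1}{n}\log\mu(A_n)\geq 0$, hence $\mathcal{P}(f,\psi,K)\geq\mathfrak{P}(t(f)\psi+f)$.

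It remains to treat the case where $\psi$ is constant on one-cylinders. If $0\notin[\underline{\psi},\overline{\psi}]$, all $\psi_i$ share a non-zero sign, so $\mathcal{C}_n(K)=\emptyset$ for large $n$ and $\mathcal{P}=-\infty$; the variational principle gives $\inf_s\mathfrak{P}(s\psi+f)=-\infty$ as $s\to\pm\infty$ since $s\int\psi\,d\mu\to-\infty$ for every invariant $\mu$. If $0\in\{\underline{\psi},\overline{\psi}\}$, say $0=\overline{\psi}$, I would set $I_0:=\{i:\psi_i=0\}$ and $c:=\min_{i\notin I_0}|\psi_i|>0$; then each $\omega\in\mathcal{C}_n(K)$ contains at most $\lfloor K/c\rfloor$ letters outside $I_0$. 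The inclusion $I_0^n\subseteq\mathcal{C}_n(K)$ combined with the Gibbs identity $\sum_{\omega\in I_0^n}e^{S_\omega f - n\mathfrak{P}(f,I_0)}\asymp 1$ on the subshift $I_0^{\N}$ gives both $\mathcal{P}(f,\psi,K)\geq\mathfrak{P}(f,I_0)$ and the divergence. The matching upper bound follows by decomposing any $\omega\in\mathcal{C}_n(K)$ along its at most $\lfloor K/c\rfloor$ bad positions into pure $I_0$-blocks and applying block Gibbs estimates, producing $\zeta_n(f,\psi,K)\leq\mathrm{poly}(n)\cdot e^{n\mathfrak{P}(f,I_0)}$. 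The identification $\mathfrak{P}(f,I_0)=\inf_s\mathfrak{P}(s\psi+f)$ is again a variational-principle calculation: sending $s\to+\infty$ suppresses every measure with $\int\psi\,d\mu<0$, so the supremum localises on $I_0^\N$-supported measures.

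The hardest step I anticipate is the simultaneous extraction of the growth rate and the Poincar\'e-series divergence: it depends on the correct coupling between the distortion of $\psi$, the Gibbs property at the minimiser $t(f)$, and Lemma~\ref{lem:nodrift implies recurrence}, and the boundary-case combinatorial upper bound is the most technically delicate piece, although the block decomposition is routine once set up.
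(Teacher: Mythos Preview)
Your argument for the principal case $0\in(\underline{\psi},\overline{\psi})$ is essentially the paper's own proof: convexity/real-analyticity of $s\mapsto\mathfrak{P}(s\psi+f)$ to locate $t(f)$, the trivial upper bound $\mathcal{P}(f,\psi,K)\le\mathfrak{P}(s\psi+f)$, and then Lemma~\ref{lem:nodrift implies recurrence} together with Borel--Cantelli and the Gibbs property at $t(f)$ to get both the lower bound and the divergence of the series. This part can stand as is.

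In the boundary case $0\in\{\underline{\psi},\overline{\psi}\}$ you diverge from the paper. The paper keeps the same scheme: it uses the Gibbs (or Dirac) measure $\mu_{f,I_0}$ on $I_0^{\N}$, observes $\mu_{f,I_0}(\psi)=0$, and re-applies Lemma~\ref{lem:nodrift implies recurrence}/Borel--Cantelli to get $\mathcal{P}(f,\psi,K)\ge\mathfrak{P}(f,I_0)$ together with the divergence; the matching upper bound comes for free from the general inequality $\mathcal{P}(f,\psi,K)\le\inf_s\mathfrak{P}(s\psi+f)$, once one proves $\inf_s\mathfrak{P}(s\psi+f)=\mathfrak{P}(f,I_0)$ by sub-additivity of the classical pressure (letting $s\to+\infty$ in $\tfrac{1}{n}\log\sum_{\omega\in I^n}e^{S_\omega(s\psi+f)}$). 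Your route instead uses the inclusion $I_0^n\subset\mathcal{C}_n(K)$ and the partition-function asymptotic on $I_0^{\N}$ for the lower bound and divergence, and a direct combinatorial block decomposition (at most $\lfloor K/c\rfloor$ letters outside $I_0$) for the upper bound, yielding the sharper estimate $\zeta_n(f,\psi,K)\le\mathrm{poly}(n)\,e^{n\mathfrak{P}(f,I_0)}$. Both approaches are correct. Yours is more explicit and gives quantitative control on $\zeta_n$, at the cost of the combinatorial bookkeeping; the paper's is shorter because it recycles the same recurrence mechanism and avoids the block decomposition entirely by passing through $\inf_s\mathfrak{P}(s\psi+f)$.
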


\begin{proof}
We first consider the case $0\in\big(\underline{\psi},\overline{\psi}\big)$.
It is well known that the function $H:t\mapsto\mathfrak{P}(t\psi+f)$
is real-analytic and convex (\cite{MR0234697}). By our assumption
on $\psi$, we have that $\lim_{t\rightarrow\pm\infty}H(t)=\infty$.
It follows that $H$ is not affine and hence, $H$ is strictly convex.
We conclude that there is a unique $t(f)\in\R$ such that $H'(t(f))=\int\psi d\mu_{t(f)\psi+f}=0$
and $H(t(f))$ must be the unique minimum of $H$. We have $\mu_{t(f)\psi+f}$-a.e.\ that
$\liminf_{\ell\to\infty}\left|S_{\ell}\psi\right|=0$, by Lemma \ref{lem:nodrift implies recurrence},
and as a consequence of the Borel-Cantelli Lemma we have $\sum_{\ell\ge1}\mu_{t(f)\psi+f}\left\{ \left|S_{\ell}\psi\right|\leq\varepsilon\right\} =\infty$
for every $\varepsilon>0$. This fact combined with the Gibbs property
(\ref{eq:Gibbs}) implies for $\epsilon\coloneqq K-D_{\text{\ensuremath{\psi}}}>0$
and  $c\geq1$ from (\ref{eq:Gibbs}), 
\begin{align*}
\infty=\sum_{\ell\ge1}\mu_{t(f)\psi+f}\left\{ \left|S_{\ell}\psi\right|\leq\varepsilon\right\}  & \leq c\sum_{\omega\in\mathcal{C}(K)}\e^{S_{\omega}\left(f+t\left(f\right)\psi\right)-|\omega|\mathfrak{P}(t(f)\psi+f)}\\
 & \leq c\e^{|t(f)|K}\sum_{\omega\in\mathcal{C}(K)}\e^{S_{\omega}f-|\omega|\mathfrak{P}(t(f)\psi+f)}.
\end{align*}
From this it is easy to see that $\mathcal{P}(f,\psi,K)-\mathfrak{P}(t(f)\psi+f)\ge0$.
Combining this with the obvious estimate $\mathcal{P}(f,\psi,K)\le\mathfrak{P}(s\psi+f)$,
for any $s\in\R$, completes the proof for the case $0\in\big(\underline{\psi},\overline{\psi}\big)$.

As seen above in any case we have $\mathcal{P}(f,\psi,K)\le\inf_{s\in\R}\mathfrak{P}(s\psi+f)$.
Hence, for the boundary cases $0\in\big\{\underline{\psi},\overline{\psi}\big\}$
we are left to verify that $\mathcal{P}(f,\psi,K)\geq\inf_{s\in\R}\mathfrak{P}(s\psi+f)$.
Let us only consider the case $\overline{\psi}=0$ and $\psi$ is
constant on one-cylinders. Then $\inf_{s\in\R}\mathfrak{P}(s\psi+f)=\lim_{s\to+\infty}\mathfrak{P}(s\psi+f)$.
Let us now consider the ergodic invariant Gibbs measure, respectively
atomic measure, $\mu_{f,I_{0}}$. Since $\mu_{f,I_{0}}\left(\psi\right)=0$
and by applying Lemma \ref{lem:nodrift implies recurrence}, we obtain
for $c\geq1$ from (\ref{eq:Gibbs}),
\begin{align*}
\infty=\sum_{\ell\ge1}\mu_{f,I_{0}}\left\{ \left|S_{\ell}\psi\right|\leq K\right\}  & =\adjustlimits\sum_{\ell\ge1}\sum_{\omega\in\mathcal{C}_{\ell}(K)\cap I_{0}^{\ell}}\mu_{f,I_{0}}\left(\left[\omega\right]\right)\\
 & \leq c\adjustlimits\sum_{\ell\ge1}\sum_{\omega\in\mathcal{C}_{\ell}(K)\cap I_{0}^{\ell}}\e^{S_{\omega}f-|\omega|\mathfrak{P}(f,I_{0})}\\
 & \leq c\adjustlimits\sum_{\ell\ge1}\sum_{\omega\in\mathcal{C}_{\ell}(K)}\e^{S_{\omega}f-|\omega|\mathfrak{P}(f,I_{0})}.
\end{align*}
From this it is easy to see that $\mathcal{P}(f,\psi,K)-\mathfrak{P}(f,I_{0})\ge0$.
To see that $\inf_{s\in\R}\mathfrak{P}(s\psi+f)=\mathfrak{P}(f,I_{0})$
we first note that obviously $\mathfrak{P}(s\psi+f)\geq\mathfrak{P}(f,I_{0})$
for any $s\in\R$. By the sub-additivity of the classical pressure,
we have for any $n\in\N$ and $s>0$ that
\[
\mathfrak{P}(s\psi+f)\leq\frac{1}{n}\log\sum_{\omega\in I^{n}}\e^{S_{\omega}\left(s\psi+f\right)}\to\frac{1}{n}\log\sum_{\omega\in I_{0}^{n}}\e^{S_{\omega}f}\;\text{for }s\to\infty.
\]
Hence,
\[
\inf_{s\in\R}\mathfrak{P}(s\psi+f)\leq\frac{1}{n}\log\sum_{\omega\in I_{0}^{n}}\e^{S_{\omega}f}\to\mathfrak{P}(f,I_{0})\quad\text{for }n\to\infty.
\]
In particular, we have $\mathcal{P}(f,\psi,K)>-\infty$ .

Finally, if $0\notin\big[\underline{\psi},\overline{\psi}\big]$,
then $\inf_{s\in\R}\mathfrak{P}(s\psi+f)=-\infty$ and by the obvious
estimate $\mathcal{P}(f,\psi,K)=-\infty=\inf_{s\in\R}\mathfrak{P}(s\psi+f)$.
\end{proof}
\begin{cor}
If $0\in\big(\underline{\psi},\overline{\psi}\big)$ or if $\psi$
is constant on one-cylinder sets, then the fibre-induced pressure
is given by
\[
\mathcal{P}(f,\psi)=\mathcal{P}(f,\psi,K),
\]
for $K>D_{\text{\ensuremath{\psi}}}$.
\end{cor}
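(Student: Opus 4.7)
The plan is to observe that the corollary is a direct bookkeeping consequence of the preceding Theorem \ref{thm:fibre-induced pressure via base pressure}: its content is that once $K$ exceeds the distortion constant $D_\psi$, the quantity $\mathcal{P}(f,\psi,K)$ stabilises, so the limiting procedure in the definition of $\mathcal{P}(f,\psi)$ becomes redundant.

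First I would note that $K\mapsto\mathcal{P}(f,\psi,K)$ is monotone non-decreasing, since $K\le K'$ implies $\mathcal{C}_n(K)\subseteq\mathcal{C}_n(K')$ and therefore $\zeta_n(f,\psi,K)\le\zeta_n(f,\psi,K')$. In particular $\lim_{K\to\infty}\mathcal{P}(f,\psi,K)$ exists in $[-\infty,+\infty]$, so $\mathcal{P}(f,\psi)$ is well defined. Next I would split into the two hypotheses of the corollary. If $0\in(\underline{\psi},\overline{\psi})$, then Theorem \ref{thm:fibre-induced pressure via base pressure} yields $\mathcal{P}(f,\psi,K)=\mathfrak{P}(t(f)\psi+f)$ for every $K>D_\psi$, a value that manifestly does not depend on $K$. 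If $\psi$ is constant on one-cylinders, I would distinguish three subcases according to the position of $0$ relative to $\big[\underline{\psi},\overline{\psi}\big]$: for $0\in(\underline{\psi},\overline{\psi})$ the preceding argument applies; for $0\in\{\underline{\psi},\overline{\psi}\}$ the theorem gives $\mathcal{P}(f,\psi,K)=\mathfrak{P}(f,I_0)$, again independent of $K$; and for $0\notin\big[\underline{\psi},\overline{\psi}\big]$ the theorem gives $\mathcal{P}(f,\psi,K)=-\infty$.

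Combining these observations, in every case covered by the hypotheses the map $K\mapsto\mathcal{P}(f,\psi,K)$ is constant on the interval $(D_\psi,\infty)$. Consequently the monotone limit satisfies
\[
\mathcal{P}(f,\psi)=\lim_{K\to\infty}\mathcal{P}(f,\psi,K)=\mathcal{P}(f,\psi,K)
\]
for every $K>D_\psi$, which is the claim.

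There is no genuine obstacle here; the only thing to be careful about is to enumerate the subcases correctly when $\psi$ is constant on one-cylinders (including the degenerate cases $0\in\{\underline{\psi},\overline{\psi}\}$ and $0\notin[\underline{\psi},\overline{\psi}]$), so as to invoke the appropriate clause of Theorem \ref{thm:fibre-induced pressure via base pressure} in each and confirm $K$-independence in all of them.
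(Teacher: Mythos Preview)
Your proposal is correct and matches the paper's approach: the corollary is stated there without proof, as an immediate consequence of Theorem~\ref{thm:fibre-induced pressure via base pressure}, and your argument simply unpacks this by observing that in every case the theorem expresses $\mathcal{P}(f,\psi,K)$ as a quantity independent of $K$ once $K>D_\psi$. The case split and the monotonicity observation you include are exactly the right bookkeeping.
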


We can now characterise when the fibre-induced pressure and classical
pressure coincide.
\begin{cor}
\label{cor:full pressure if no drift} Let $f:\Sigma\rightarrow\R$
and $\psi:\Sigma\rightarrow\R$ be Hölder continuous functions. Then
we have 
\[
\mathcal{P}(f,\psi)=\mathfrak{P}(f)\quad\text{if and only if}\quad\mu_{f}(\psi)=0.
\]
\end{cor}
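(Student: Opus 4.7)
The plan is to establish the equality $\mathcal{P}(f,\psi)=\mathfrak{P}(f)$ by proving each inequality separately, essentially mimicking the proof of Theorem \ref{thm:fibre-induced pressure via base pressure} with $t(f)$ replaced by $0$. The upper bound is immediate and unconditional: since $\mathcal{C}_n(K)\subseteq I^n$, we have $\zeta_n(f,\psi,K)\le\sum_{\omega\in I^n}\e^{S_\omega f}$, so $\mathcal{P}(f,\psi,K)\le\mathfrak{P}(f)$ and hence, as $K\to\infty$, $\mathcal{P}(f,\psi)\le\mathfrak{P}(f)$. More generally, the same bounding trick used in the proof of Theorem \ref{thm:fibre-induced pressure via base pressure} (using $|s S_\omega\psi|\le|s|K$ on $\mathcal{C}_n(K)$) yields the one-parameter refinement $\mathcal{P}(f,\psi)\le\mathfrak{P}(s\psi+f)$ for every $s\in\R$, which will be needed for the converse.

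For the direction $\mu_f(\psi)=0\implies\mathcal{P}(f,\psi)=\mathfrak{P}(f)$, I would repeat the first half of the proof of Theorem \ref{thm:fibre-induced pressure via base pressure} verbatim with $t(f)=0$. Because $\mu_f$ is the Gibbs measure for the H\"older potential $f$ on the full shift it is $\sigma$-invariant and ergodic, and $\psi$ is $\mu_f$-integrable with $\mu_f(\psi)=0$ by assumption, so Lemma \ref{lem:nodrift implies recurrence} gives $\liminf_{\ell\to\infty}|S_\ell\psi|=0$ $\mu_f$-a.e. Borel--Cantelli then yields $\sum_{\ell\ge1}\mu_f\{|S_\ell\psi|\le\varepsilon\}=\infty$ for every $\varepsilon>0$, and combining this with the distortion bound (\ref{eq:boundedDistortion}) and the Gibbs property (\ref{eq:Gibbs}) for $\mu_f$ (with constant $c$), applied with $\varepsilon=K-D_\psi$ for $K>D_\psi$, produces
\[
\infty=\sum_{\ell\ge1}\mu_f\{|S_\ell\psi|\le\varepsilon\}\le c\sum_{\omega\in\mathcal{C}(K)}\e^{S_\omega f-|\omega|\mathfrak{P}(f)}.
\]
Divergence of the right-hand side forces $\mathcal{P}(f,\psi,K)\ge\mathfrak{P}(f)$, so $\mathcal{P}(f,\psi)\ge\mathfrak{P}(f)$.

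For the converse direction, I would invoke the standard differentiability of the classical pressure. Set $H(s):=\mathfrak{P}(s\psi+f)$; this function is real-analytic and convex with $H'(0)=\int\psi\,d\mu_f=\mu_f(\psi)$. From the universal bound established in the first paragraph we have $H(0)=\mathfrak{P}(f)=\mathcal{P}(f,\psi)\le H(s)$ for all $s\in\R$, so $0$ is a global minimiser of $H$. Convexity and differentiability then force $H'(0)=0$, i.e.\ $\mu_f(\psi)=0$, finishing the proof.

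The only subtle point I expect is bookkeeping around the boundary cases $0\in\{\underline{\psi},\overline{\psi}\}$, where Theorem \ref{thm:fibre-induced pressure via base pressure} required an auxiliary hypothesis on $\psi$. However, because the present argument goes directly through the Borel--Cantelli / Gibbs estimate for the single measure $\mu_f$ (rather than through the unique minimiser $t(f)$), no such hypothesis is needed here, and the argument is uniform across cases.
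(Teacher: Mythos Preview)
Your argument is correct and uses the same ingredients as the paper's proof: the universal bound $\mathcal{P}(f,\psi)\le\inf_{s}\mathfrak{P}(s\psi+f)$, Lemma~\ref{lem:nodrift implies recurrence} combined with Borel--Cantelli and the Gibbs property for the lower bound, and the derivative formula $H'(0)=\mu_f(\psi)$ for the converse.

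The one substantive difference is in the forward implication. The paper cites Theorem~\ref{thm:fibre-induced pressure via base pressure} (which in its first clause requires $0\in(\underline{\psi},\overline{\psi})$) and is therefore forced into a case distinction: if $H$ is strictly convex one has $0\in(\underline{\psi},\overline{\psi})$ and $t(f)=0$, while if $H$ is affine then $\psi$ is cohomologous to zero and the equality is immediate. You instead rerun the Borel--Cantelli/Gibbs estimate from the proof of Theorem~\ref{thm:fibre-induced pressure via base pressure} directly for the measure $\mu_f$, which works uniformly once $\mu_f(\psi)=0$ is assumed and so dispenses with the case split and with any hypothesis on the position of $0$ relative to $[\underline{\psi},\overline{\psi}]$. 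This is a genuine, if modest, streamlining; your closing remark about the boundary cases is exactly the point.
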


\begin{proof}
First, assume $\mu_{f}(\psi)=0$. With $H$ defined as in the proof
of Theorem \ref{thm:fibre-induced pressure via base pressure}, we
have $H'\left(0\right)=0$. We distinguish two cases. If $H$ is strictly
convex, then $0\in\big(\underline{\psi},\overline{\psi}\big)$ and
we have $t(f)=0$. Thus, by Theorem \ref{thm:fibre-induced pressure via base pressure},
$\mathcal{P}(f,\psi)=\mathfrak{P}(f)$. If $H$ is affine, then $\psi$
is cohomologous to zero. Hence, also in this case, we have $\mathcal{P}(f,\psi)=\mathfrak{P}(f)$.
Now, suppose $\mu_{f}(\psi)\neq0$. Again by the proof of Theorem
\ref{thm:fibre-induced pressure via base pressure}, $\mathcal{P}(f,\psi)\le\inf_{s\in\R}\mathfrak{P}(s\psi+f)$.
Since $\frac{\partial}{\partial t}\mathfrak{P}(t\psi+f)_{|t=0}=\mu_{f}(\psi)\neq0$,
we conclude $\mathcal{P}(f,\psi)<\mathfrak{P}(f)$.
\end{proof}
For later use we need the following auxiliary statement.
\begin{lem}
\label{lem:strictly decreasing}If $f<0$ and $0\in[\underline{\psi},\overline{\psi}]$,
then $s\mapsto\mathcal{P}(sf,\psi)$ is finite, continuous, strictly
decreasing and we have
\[
\lim_{s\to\pm\infty}\mathcal{P}(sf,\psi)=\mp\infty.
\]
\end{lem}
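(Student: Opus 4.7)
The plan is to read off all four claims from the identity
\[
\mathcal{P}(sf,\psi)=\inf_{t\in\R}\mathfrak{P}(t\psi+sf),
\]
supplied by Theorem \ref{thm:fibre-induced pressure via base pressure}; that theorem also directly gives finiteness of $\mathcal{P}(sf,\psi)$ for every $s\in\R$ under the hypothesis $0\in[\underline{\psi},\overline{\psi}]$. For continuity, I would observe that the classical pressure is convex in its potential and that $t\psi+sf$ depends linearly on $(s,t)$, so $(s,t)\mapsto\mathfrak{P}(t\psi+sf)$ is jointly convex; its partial infimum $s\mapsto\mathcal{P}(sf,\psi)$ is then convex on $\R$ and, being everywhere finite, is automatically continuous.

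For strict monotonicity I will use that $f$ is continuous with $f<0$ on the compact space $\Sigma$, so that $m\coloneqq-\sup f>0$. For $s_1<s_2$ and any $t\in\R$, the elementary estimate $\mathfrak{P}(g+h)\le\mathfrak{P}(g)+\sup h$ applied with $g=t\psi+s_1 f$ and $h=(s_2-s_1)f$ gives
\[
\mathfrak{P}(t\psi+s_2 f)\le\mathfrak{P}(t\psi+s_1 f)-(s_2-s_1)m.
\]
Taking $\inf_t$ yields $\mathcal{P}(s_2 f,\psi)\le\mathcal{P}(s_1 f,\psi)-(s_2-s_1)m$, which is strict. Specialising the same estimate to $t=0$ also settles the limit $s\to+\infty$, since $\mathcal{P}(sf,\psi)\le\mathfrak{P}(sf)\le\mathfrak{P}(0)+s\sup f\to-\infty$.

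The main obstacle is the limit $s\to-\infty$, which requires a lower bound on $\inf_t\mathfrak{P}(t\psi+sf)$ blowing up to $+\infty$. My plan is to produce a $\sigma$-invariant Borel probability $\nu$ on $\Sigma$ with $\nu(\psi)=0$; such $\nu$ exists because the map $\mu\mapsto\mu(\psi)$ is continuous and affine on the weak-$*$ compact convex set of invariant probabilities, and its image contains both $\underline{\psi}$ and $\overline{\psi}$ (take weak-$*$ cluster points of empirical measures along orbits realising the $\liminf$/$\limsup$ in \eqref{eq:Psiupperlower}), hence also contains $0$. The variational inequality $\mathfrak{P}(g)\ge h_\nu(\sigma)+\nu(g)$ applied with $g=t\psi+sf$ yields $\mathfrak{P}(t\psi+sf)\ge h_\nu(\sigma)+s\nu(f)$ for every $t\in\R$, so taking the infimum over $t$,
\[
\mathcal{P}(sf,\psi)\ge h_\nu(\sigma)+s\nu(f)\ge s\sup f\to+\infty\quad\text{as }s\to-\infty,
\]
because $\nu(f)\le\sup f<0$ and $s<0$ flips the inequality.
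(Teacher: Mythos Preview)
Your proof is correct, but the route differs from the paper's in an instructive way. The paper works directly with the definition of $\mathcal{P}(sf,\psi,K)$: from
\[
\zeta_n(tf,\psi,K)=\sum_{\omega\in\mathcal{C}_n(K)}\e^{(t-s)S_\omega f}\,\e^{sS_\omega f}
\]
and $S_\omega f\le n\max f$ one gets the single inequality
\[
\mathcal{P}(tf,\psi)\le (t-s)\max f+\mathcal{P}(sf,\psi)\qquad(s<t),
\]
and finiteness (from the proof of Theorem~\ref{thm:fibre-induced pressure via base pressure}) then yields \emph{all} four claims at once: strict decrease, both limits, and (together with the companion inequality using $\min f$) Lipschitz continuity. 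Your step~3 is exactly this inequality, just derived on the level of $\mathfrak{P}(t\psi+\,\cdot\,)$ and then passed through $\inf_t$; so the core of the two proofs coincides.

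The main difference is your treatment of $s\to-\infty$. Your variational-principle argument with an invariant measure $\nu$ satisfying $\nu(\psi)=0$ is valid (the image of $\mu\mapsto\mu(\psi)$ on the weak-$*$ compact set of invariant measures is indeed the closed interval $[\underline{\psi},\overline{\psi}]$), but it is unnecessary: your own inequality from step~3 with $s_1=s<0$ and $s_2=0$ gives
\[
\mathcal{P}(sf,\psi)\ge\mathcal{P}(0,\psi)+s\max f\to+\infty\quad\text{as }s\to-\infty,
\]
which is exactly how the paper reads this limit off. Likewise, your convexity argument for continuity is fine, but Lipschitz continuity already follows from the two-sided version of the step-3 estimate. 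In short: correct, but step~5 can be deleted, and the detour through $\inf_t\mathfrak{P}$ is not needed since the estimate holds directly for $\mathcal{P}(\cdot,\psi,K)$.
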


\begin{proof}
By definition, we have for $K>D_{\text{\ensuremath{\psi}}}$ and $s<t$,
\[
\mathcal{P}(tf,\psi,K)=\limsup_{n\rightarrow\infty}\frac{1}{n}\log\sum_{\omega\in\mathcal{C}_{n}\left(K\right)}\e^{\left((t-s)+s\right)S_{\omega}f}\leq(t-s)\max f+\mathcal{P}(sf,\psi,K).
\]
The claim follows because $\mathcal{P}(sf,\psi,K)\in\R$, $s\in\R$,
which follows from the arguments given in the proof of Theorem \ref{thm:fibre-induced pressure via base pressure}.
\end{proof}
\begin{rem}
\label{rem:Real-Analytic} For $f,\psi$ Hölder continuous functions,
due to the correspondence between the fibre-induced and the classical
pressure as stated in Theorem \ref{thm:fibre-induced pressure via base pressure},
we obtain that $\left(s,a\right)\mapsto\mathcal{P}\left(sf,\psi-a\right)$
is real-analytic with respect to $s\in\R$ and $a\in(\underline{\psi},\overline{\psi})$.
\end{rem}

We end this section introducing the new notion of $\psi$-recurrent
potentials. As already mentioned in the introduction this clarifies
the connection of our setting to the ideas developed in \cite{MR1738951}
and \cite{MR3436756}.

Let $f,\psi:\Sigma\rightarrow\R$ be Hölder continuous. We say that
$f$ is a \emph{$\psi$-recurrent potential} if for all $K>D_{\psi}$
\[
\sum_{\omega\in\mathcal{C}\left(K\right)}\e^{S_{\omega}f-|\omega|\mathcal{P}\left(f,\psi\right)}=\infty.
\]
The following corollary is an immediate consequence of Theorem \ref{thm:fibre-induced pressure via base pressure}.
\begin{cor}
\label{cor:REcurrent} Let $\psi:\Sigma\rightarrow\R$ be a Hölder
continuous function with $\underline{\psi}<0<\overline{\psi}$ or
$\psi$ is constant on one-cylinders. Then any Hölder continuous $f:\Sigma\rightarrow\R$
is a $\psi$-recurrent potential.
\end{cor}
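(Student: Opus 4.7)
The plan is to deduce the corollary directly from Theorem \ref{thm:fibre-induced pressure via base pressure}, which has already carried out all the substantive work. Recall that $f$ is a $\psi$-recurrent potential exactly when
$$\sum_{\omega\in\mathcal{C}(K)} \e^{S_\omega f - |\omega|\mathcal{P}(f,\psi)} = \infty$$
for every $K > D_\psi$, so the task is simply to exhibit this divergence under the stated hypothesis on $\psi$.

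First, I would reduce the quantity $\mathcal{P}(f,\psi)$ in the exponent to $\mathcal{P}(f,\psi,K)$ for a single $K>D_\psi$. Under either of the two hypotheses --- $\underline{\psi}<0<\overline{\psi}$, or $\psi$ constant on one-cylinder sets --- Theorem \ref{thm:fibre-induced pressure via base pressure} identifies $\mathcal{P}(f,\psi,K)$ either with $\min_{s\in\R}\mathfrak{P}(s\psi+f)$ or with $\inf_{s\in\R}\mathfrak{P}(s\psi+f)$, which are manifestly independent of $K$. Therefore $\mathcal{P}(f,\psi)=\lim_{K\to\infty}\mathcal{P}(f,\psi,K)=\mathcal{P}(f,\psi,K)$ for every $K>D_\psi$, and it suffices to verify the divergence with $\mathcal{P}(f,\psi,K)$ in the exponent. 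The degenerate sub-case where $\psi$ is constant on one-cylinders with $0\notin[\underline{\psi},\overline{\psi}]$ is trivial: every $\psi(i,\ldots)$ has the same sign, so $\mathcal{C}(K)$ is eventually empty and $\mathcal{P}(f,\psi)=-\infty$, making the series diverge for vacuous reasons.

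Second, I would invoke the last assertion of Theorem \ref{thm:fibre-induced pressure via base pressure}, which provides exactly
$$\sum_{\omega\in\mathcal{C}(K)} \e^{S_\omega f - |\omega|\mathcal{P}(f,\psi,K)} = \infty$$
whenever $0 \in (\underline{\psi},\overline{\psi})$, or whenever $\psi$ is constant on one-cylinder sets with $0 \in [\underline{\psi},\overline{\psi}]$. Our two hypotheses fit into this framework (the inclusion $0\in[\underline{\psi},\overline{\psi}]$ is automatic in the first case and handled by the remark above in the second), so combining with the first step gives the desired conclusion.

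There is essentially no obstacle: the genuine analytic content --- the Borel--Cantelli argument driven by the Gibbs measure $\mu_{t(f)\psi+f}$ in the interior case and by $\mu_{f,I_0}$ in the boundary case, relying on the Gibbs property \eqref{eq:Gibbs} and Lemma \ref{lem:nodrift implies recurrence} --- has already been performed inside the proof of Theorem \ref{thm:fibre-induced pressure via base pressure}. Corollary \ref{cor:REcurrent} is therefore best viewed as a convenient repackaging of that divergence in the language of $\psi$-recurrent potentials, which will be the usable form in the later analysis.
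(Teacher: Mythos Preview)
Your proposal is correct and mirrors the paper's own treatment, which simply declares the corollary an immediate consequence of Theorem~\ref{thm:fibre-induced pressure via base pressure}; you have merely spelled out the two ingredients (independence of $\mathcal{P}(f,\psi,K)$ from $K$, and the final divergence assertion of that theorem) explicitly. One tiny quibble: in the degenerate sub-case $\psi$ constant on one-cylinders with $0\notin[\underline{\psi},\overline{\psi}]$, for very small $K>0=D_\psi$ the set $\mathcal{C}(K)$ can be genuinely empty rather than just finite, so ``diverges for vacuous reasons'' is a slight overstatement---but this is a boundary-case looseness already present in the paper's formulation rather than a defect of your argument.
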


\begin{rem}
\label{rem:original-gurevich} We would like to point out that our
definition of recurrent potentials is analogous to the corresponding
definition for topological Markov chains (\cite{MR1738951}). For
this suppose that $\psi:\Sigma\rightarrow\R$ is constant on $1$-cylinders
such that $\underline{\psi}=\min\psi<0<\max\psi=\overline{\psi}$.
Then we have $D_{\psi}=0$ and $\mathcal{P}(f,\psi)=\mathcal{P}(f,\psi,K)$
for every $K>0$. Moreover, $G:=\left\{ S_{\omega}\psi\mid\omega\in I^{\star}\right\} $
is a countable semi-group and if $G$ is a discrete subset of $\R$,
by Lemma \ref{lem:D} below, $G$ is a countable group. Further, $\sigma\rtimes\psi:\Sigma\times G\rightarrow\Sigma\times G$
is a transitive topological Markov chain with alphabet $I\times G$.
Now, Sarig's definition in \cite{MR1738951} of the Gurevich pressure
of $f\circ\pi_{1}$ with respect to this Markov chain coincides with
our fibre-induced pressure and Sarig's definition of a recurrent potential
also coincides with ours (see also \cite{MR2551790,MR2861747,MR3922537}).
For the closely related concept of induced pressure for Markov shifts
we refer to \cite{MR3190211}.
\end{rem}

\begin{lem}
\label{lem:D}Assume that $\psi:\Sigma\rightarrow\R$ is constant
on one-cylinder sets and $0\in\big(\underline{\psi},\overline{\psi}\big)$.
Then for any $N>0$ and for every $\epsilon>0$ there is a finite
set $\Lambda\subset I^{\star}$ such that for any $\omega\in\mathcal{C}\left(N\right)$
there exists $\nu\in\Lambda$ such that $\omega\nu\in\mathcal{C}\left(\epsilon\right)$.
\end{lem}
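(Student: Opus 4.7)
The plan is to exploit that $\psi$ being constant on one-cylinder sets makes $S_\omega\psi$ depend only on the finite word $\omega$ via $S_\omega\psi = \sum_{k=1}^{|\omega|}\psi_{\omega_k}$ with $\psi_i := \psi(i,\ldots)$, and in particular makes it additive under concatenation: $S_{\omega\nu}\psi = S_\omega\psi + S_\nu\psi$. The hypothesis $0 \in (\underline{\psi},\overline{\psi})$ translates into the existence of letters $i_\pm \in I$ with $a := \psi_{i_-} < 0 < \psi_{i_+} =: b$. Setting $\mathcal{S} := \{S_\nu\psi : \nu \in I^\star\}$, the problem reduces to the algebraic statement: for every $v$ in the bounded set $\mathcal{S}\cap[-N,N]$ produce, from a single finite list $\Lambda \subset I^\star$, a word $\nu$ with $|v + S_\nu\psi| \le \epsilon$.

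First I would prove the key structural dichotomy for $\mathcal{S}$: either $\mathcal{S}$ is a cyclic subgroup $g_0\Z$ of $\R$, or $\mathcal{S}$ is dense in $\R$. Since $\mathcal{S}$ contains the two-generator sub-semigroup $\N_{0} a + \N_{0} b$, it suffices to prove the corresponding dichotomy for the latter. If $a/b = -p/q \in \Q$ in lowest terms, then the relation $qa + pb = 0$ allows one to shift any representation $(\alpha,\beta)\in\Z^{2}$ of a given element of $\Z a + \Z b = (b/q)\Z$ by integer multiples of $(q,p)$ into $\N_{0}^{2}$, giving $\N_{0} a + \N_{0} b = (b/q)\Z$. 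If $a/b \notin \Q$, Weyl's equidistribution of $(na \bmod b)_{n\in\N}$ in $[0,b)$ shows $\N_{0} a + \N_{0} b$ is dense in $\R$: for any $r\in\R$ and $\delta>0$, choose $n$ with $na\bmod b$ within $\delta$ of $r\bmod b$ and compensate with a nonnegative multiple of $b$, which is feasible because the implicit positive integer $k_n$ with $na + k_n b \in [0,b)$ satisfies $k_n \approx n|a|/b \to \infty$.

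With this dichotomy I would handle the two cases separately. In the discrete case the set $\mathcal{S}\cap[-N,N]$ is finite; since $\mathcal{S}=g_{0}\Z$ is a group, for each $v$ in it I pick $\nu_{v}\in I^\star$ with $S_{\nu_{v}}\psi = -v$ and let $\Lambda := \{\nu_{v}\}_{v}$, yielding $|S_{\omega\nu_{v}}\psi| = 0 \le \epsilon$ for the corresponding $\omega$. In the dense case, cover the compact interval $[-N,N]$ by finitely many intervals $(r_{k}-\epsilon/2,\,r_{k}+\epsilon/2)$, $k=1,\ldots,M$; density of $\mathcal{S}$ lets me choose $\nu_{k}\in I^\star$ with $|S_{\nu_{k}}\psi + r_{k}| \le \epsilon/2$, and $\Lambda := \{\nu_{k}\}_{k=1}^{M}$ works: for any $\omega\in\mathcal{C}(N)$ pick $k$ with $|S_\omega\psi - r_{k}| < \epsilon/2$, and the triangle inequality $|S_{\omega\nu_{k}}\psi| \le |S_\omega\psi - r_{k}| + |r_{k} + S_{\nu_{k}}\psi| \le \epsilon$ gives $\omega\nu_{k}\in\mathcal{C}(\epsilon)$.

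The main obstacle I expect is the irrational-ratio case, where the Weyl argument must be carried out carefully so that the exponent of $a$ (supplied by equidistribution) \emph{and} the compensating exponent of $b$ (coming from $k_n$) are simultaneously nonnegative integers. The unboundedness $k_n\to\infty$ is exactly what provides the slack to realise any prescribed real target, regardless of its sign.
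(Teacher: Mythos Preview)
Your approach is correct and takes a genuinely different route from the paper's. The paper argues ergodically: it picks the Gibbs measure $\mu_{t_0\psi}$ at the unique $t_0$ minimising $t\mapsto\mathfrak P(t\psi)$, so that $\mu_{t_0\psi}(\psi)=0$, and invokes Atkinson's recurrence (Lemma~\ref{lem:nodrift implies recurrence}) to obtain $\liminf_\ell |S_\ell\psi|=0$ almost surely. It then partitions $[-N,N]\setminus\{0\}$ into finitely many half-open intervals $I_k$ of length $\epsilon/2$, chooses for each $I_k$ meeting $\{S_\omega\psi:\omega\in I^\star\}$ a representative $\omega_k$, and uses recurrence along a $\mu_{t_0\psi}$-typical point of $[\omega_k]$ to find a suffix $\nu_k$ with $|S_{\omega_k\nu_k}\psi|<\epsilon/2$; the exact additivity $S_{\omega\nu}\psi=S_\omega\psi+S_\nu\psi$ (from $D_\psi=0$) then transfers $\nu_k$ to every $\omega$ with $S_\omega\psi\in I_k$. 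Your argument replaces the ergodic input by the elementary structure of the additive semigroup $\mathcal S\subset\R$, which makes the proof entirely self-contained and independent of Gibbs measures; the paper's route, by contrast, reuses machinery already in place and avoids the rational/irrational case split.

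One small gap to close: the sentence ``it suffices to prove the corresponding dichotomy for $\N_0 a+\N_0 b$'' is not yet a reduction, since $\N_0 a+\N_0 b=g_0\Z$ does not by itself force the larger $\mathcal S$ to be cyclic. The patch is short. If $\mathcal S$ is not dense, then for every $j$ the ratio of $\psi_j$ to whichever of $a,b$ has the opposite sign must be rational (otherwise your Weyl argument applied to that pair already gives density), so all $\psi_i\in g_0\Q$ and hence $\mathcal S\subset (g_0/M)\Z$ for some common denominator $M$; the image of $\mathcal S$ in the finite group $(g_0/M)\Z/g_0\Z$ is a sub-semigroup of a finite group, hence a subgroup, and it follows that $\mathcal S$ itself is a cyclic group. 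With this in hand both branches of your argument go through as written.
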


\begin{proof}
If $0\in\big(\underline{\psi},\overline{\psi}\big)$, the map $t\mapsto\mathfrak{P}\left(t\psi\right)$
is real-analytic, strictly convex and we have $\lim_{t\to\pm\infty}\mathfrak{P}\left(t\psi\right)=\infty$.
Hence, it has a unique minimum in $t_{0}\in\R$. Then for the unique
Gibbs measure $\mu_{t_{0}\psi}$ with respect to the potential $t_{0}\psi$
we get 
\[
\mu_{t_{0}\psi}\left(\psi\right)=\frac{d}{dt}\mathfrak{P}\left(t\psi\right)|_{t=t_{0}}=0
\]
and this Gibbs measure is also non-atomic and ergodic with respect
to $\sigma$. By Lemma \ref{lem:nodrift implies recurrence}, we obtain
\begin{equation}
\liminf_{\ell\to\infty}\left|S_{\ell}\psi\right|=0\qquad\mu_{t_{0}\psi}\text{-a.e.}\label{eq:recurrence-1}
\end{equation}
For $k=1,\ldots,m\coloneqq\left\lceil 2N/\epsilon\right\rceil $ we
set 
\[
I_{k}\coloneqq\left(\left(k-1\right)\epsilon/2,k\epsilon/2\right]\quad\text{and}\quad I_{-k}\coloneqq\left[-k\epsilon/2,\left(-k+1\right)\epsilon/2\right).
\]
We can ignore all $k\in\left\{ \pm1,\ldots,\pm m\right\} $ such that
$S_{\omega}\psi\notin I_{k}$ for all $\omega\in I^{\star}$. For
the remaining $k$ we find $\omega_{k}\in I^{\star}$ such that $S_{\omega_{k}}\psi\in I_{k}$.
Since every non-empty cylinder $[\omega]$ with $\omega\in I^{\star}$
carries\textcolor{red}{{} }positive $\mu_{t_{0}\psi}$-measure, using
(\ref{eq:recurrence-1}) we find $x\in\left[\omega_{k}\right]$ and
$n\geq\left|\omega_{k}\right|$ such that $\left|S_{n}\psi\left(x\right)\right|<\epsilon/2$.
With $\nu_{k}\coloneqq\left(x_{\left|\omega_{k}\right|+1},\ldots,x_{n}\right)$
we have $\left|S_{\omega_{k}\nu_{k}}\psi\right|<\epsilon/2$ applying
(\ref{eq:boundedDistortion}). Hence using (\ref{eq:boundedDistortion})
once more, for an arbitrary $\omega\in I^{\star}$ with $S_{\omega}\psi\in I_{k}$,
we have for all $x\in\left[\omega\nu_{k}\right]$
\begin{align*}
\left|S_{\left|\omega\nu_{k}\right|}\psi\left(x\right)\right| & =\left|S_{\left|\omega\nu_{k}\right|}\psi\left(x\right)-S_{\left|\omega_{k}\nu_{k}\right|}\psi\left(\omega_{k}\sigma^{\left|\omega\right|}\left(x\right)\right)+S_{\left|\omega_{k}\nu_{k}\right|}\psi\left(\omega_{k}\sigma^{\left|\omega\right|}\left(x\right)\right)\right|\\
 & \leq\left|S_{\left|\omega\nu_{k}\right|}\psi\left(x\right)-S_{\left|\omega_{k}\nu_{k}\right|}\psi\left(\omega_{k}\sigma^{\left|\omega\right|}\left(x\right)\right)\right|+\left|S_{\left|\omega_{k}\nu_{k}\right|}\psi\left(\omega_{k}\sigma^{\left|\omega\right|}\left(x\right)\right)\right|,\\
 & \leq\left|S_{\left|\omega\right|}\psi\left(x\right)-S_{\left|\omega_{k}\right|}\psi\left(\omega_{k}\sigma^{\left|\omega\right|}\left(x\right)\right)\right|+\epsilon/2\\
 & \leq\epsilon/2+\epsilon/2=\epsilon
\end{align*}
proving the desired statement.
\end{proof}
\begin{rem}
\label{rem:characterisation of recurrence}For a subgroup $G<\Z$
and $\psi:\Sigma\rightarrow G$ it is shown in \cite{MR3436756} that
$f\circ\pi_{1}$ is recurrent (according to Sarig, see \cite{MR1738951})
if and only if there is a character $c:G\rightarrow(0,\infty)$ such
that $\mu_{f-\log c\circ\psi}\times\lambda_{G}$ is the conservative
equilibrium measure of $f\circ\pi_{1}$, where $\lambda_{G}$ denotes
the counting measure on $G$ (for the definition of an equilibrium
measure in this setting, see \cite{MR1818392}). If $\psi$ is constant
on one-cylinders and $0\in\big(\underline{\psi},\overline{\psi}\big)$,
then $f-\log c\circ\psi$ is of the form $f+t(f)\psi$ as stated in
Theorem \ref{thm:fibre-induced pressure via base pressure}. Further,
we can derive (\ref{eq:InducedPressureViaMinClassicalPressure}) from
\cite[Theorem 1(1)]{MR3436756} where we note again that the fibre-induced
and Gurevich pressure coincide in this setting, see the previous remark.
Moreover, by \cite[ Proposition 1.4]{MR3436756} we have that $\mathcal{P}(f,\psi)=\mathfrak{P}(f)$
if and only if $f$ is \emph{symmetric on average}, i.e.
\[
\sup_{m\in\Z}\limsup_{n\rightarrow\infty}\frac{\sum_{\left|\omega\right|\le n,S_{\omega}\psi=m}\e^{S_{\omega}f-\left|\omega\right|\mathcal{P}\left(f,\psi\right)}}{\sum_{\left|\omega\right|\le n,S_{\omega}\psi=-m}\e^{S_{\omega}f-\left|\omega\right|\mathcal{P}\left(f,\psi\right)}}<\infty.
\]
 By Corollary \ref{cor:full pressure if no drift} this condition
is in fact equivalent to $\mu_{f}(\psi)=0$.
\end{rem}

\section{Multifractal decomposition with respect to $\alpha$-escaping sets\label{sec:Multifractal-decomposition}}

In this section $\psi:\Sigma\to\Z$ denotes the symbolic step length
function which is constant on one-cylinder sets and such that $\psi=\Psi\circ\pi$
except possibly on a finite set. Note that under these assumption
on $\psi$ we have $D_{\psi}=0$. For a given drift parameter $\alpha\in\big[\underline{\psi},\overline{\psi}\big]$
let us set 
\[
\psi_{\alpha}:=\psi-\alpha.
\]
Note that we have
\begin{align*}
\mathbf{E}(\alpha) & \cap\left[0,1\right]=\pi\left\{ \omega\in\Sigma\mid\exists K>0\;\left|S_{n}\psi_{\alpha}(\omega)\right|\leq K\;\text{for infinitely many }n\in\N\right\} \setminus\mathcal{D}
\end{align*}
and 
\begin{align*}
\mathbf{E}_{u}(\alpha)\cap\left[0,1\right] & =\pi\left\{ \omega\in\Sigma\mid\exists K>0\;\forall n\in\N\;\left|S_{n}\psi_{\alpha}(\omega)\right|\leq K\right\} \setminus\mathcal{D},
\end{align*}
for the countable set $\mathcal{D}$, see (\ref{eq:D}). Recall the
definition of the $\alpha$-Poincaré exponent $\delta_{\alpha}$ with
$K>\D$, see (\ref{eq:definition Poincare exponent}). Next we give
the proof of the implication (\ref{eq:delta_alpha=00003D0}) stated
in the introduction.
\begin{proof}
[Proof of implication (\ref{eq:delta_alpha=00003D0})] For $\alpha\in\R\setminus\big[\underline{\psi},\overline{\psi}\big]$
we have that $S_{n}\psi_{\alpha}$ diverges uniformly to either $+\infty$
or $-\infty$ and hence, being a finite sum, $\sum_{{\omega\in I^{\star}},\,{\left|S_{\omega}\left(\psi-\alpha\right)\right|\leq K}}\e^{s\cdot S_{\omega}\varphi}$
is finite for all $s\geq0$.
\end{proof}
The following assertion will be crucial for determining the multifractal
decomposition with respect to our escaping sets, and to derive explicit
formulas for dimension gaps.
\begin{thm}
\label{thm:critical exponent via base pressures} We have the following
three different characterisations of the $\alpha$-Poincaré exponent
$\delta_{\alpha}$:
\begin{enumerate}
\item[(i)] \label{enu:-delta_Alpha_Pressure_new} For $\alpha\in\big[\underline{\psi},\overline{\psi}\big]$
we have that $\delta_{\alpha}$ is uniquely determined by
\[
{\displaystyle \mathcal{P}\left(\delta_{\alpha}\varphi,\psi_{\alpha}\right)=0.}
\]
\item[(ii)] \label{enu:There-exist-a(ii)}For $\alpha\in\big(\underline{\psi},\overline{\psi}\big)$
we have that $\delta_{\alpha}$ is determined by the unique solution
$\left(\delta_{\alpha},q_{\alpha}\right)\in\R^{2}$ of the equations
\[
{\displaystyle \mathfrak{P}\left(\delta_{\alpha}\varphi+q_{\alpha}\psi_{\alpha}\right)=0\quad\text{and}\quad\frac{\partial}{\partial q}\mathfrak{P}(\delta_{\alpha}\varphi+q\psi_{\alpha})|_{q=q_{\alpha}}=0}.
\]
In particular, we have $\int\psi\,d\mu_{\delta_{\alpha}\varphi+q_{\alpha}\psi_{\alpha}}=\alpha$.
\item[(iii)] \label{enu:delta_alpha via MF-Formalism}Let $s:\R\times\R\to\R$
be the implicitly defined function given by $\mathfrak{P}\left(s\left(q,a\right)\varphi+q\psi+a\1\right)=0$.
Then $s$ is convex and for the \emph{Legendre transform} of $s$
defined by
\[
\widehat{s}\left(\alpha_{1},\alpha_{2}\right)\coloneqq\sup_{(q,a)\in\R^{2}}q\alpha_{1}+a\alpha_{2}-s\left(q,a\right)
\]
we have for $\alpha\in\big(\underline{\psi},\overline{\psi}\big)$
\[
\delta_{\alpha}=-\inf_{r\in\R}\widehat{s}\left(\alpha r,r\right)=\inf_{r\in\R}s\left(r,-\alpha r\right).
\]
In particular, we have that on the line $g$ through the origin with
slope $-\alpha$ in the $\left(q,a\right)$-plane there exists exactly
one point $\left(r,-\alpha r\right)$ such that the gradient of $s$
in this point is perpendicular to $g$. The height of $s$ in $\left(r,-\alpha r\right)$
is $\delta_{\alpha}$ and the plane which is tangential to the graph
of $s$ in $\left(r,-\alpha r,s\left(r,-\alpha r\right)\right)$ intersects
the $s$-axis in $-\delta_{\alpha}$ (cf.\ Figure \ref{fig:Graph-of-1}).
\begin{figure}
\includegraphics[width=0.5\textwidth]{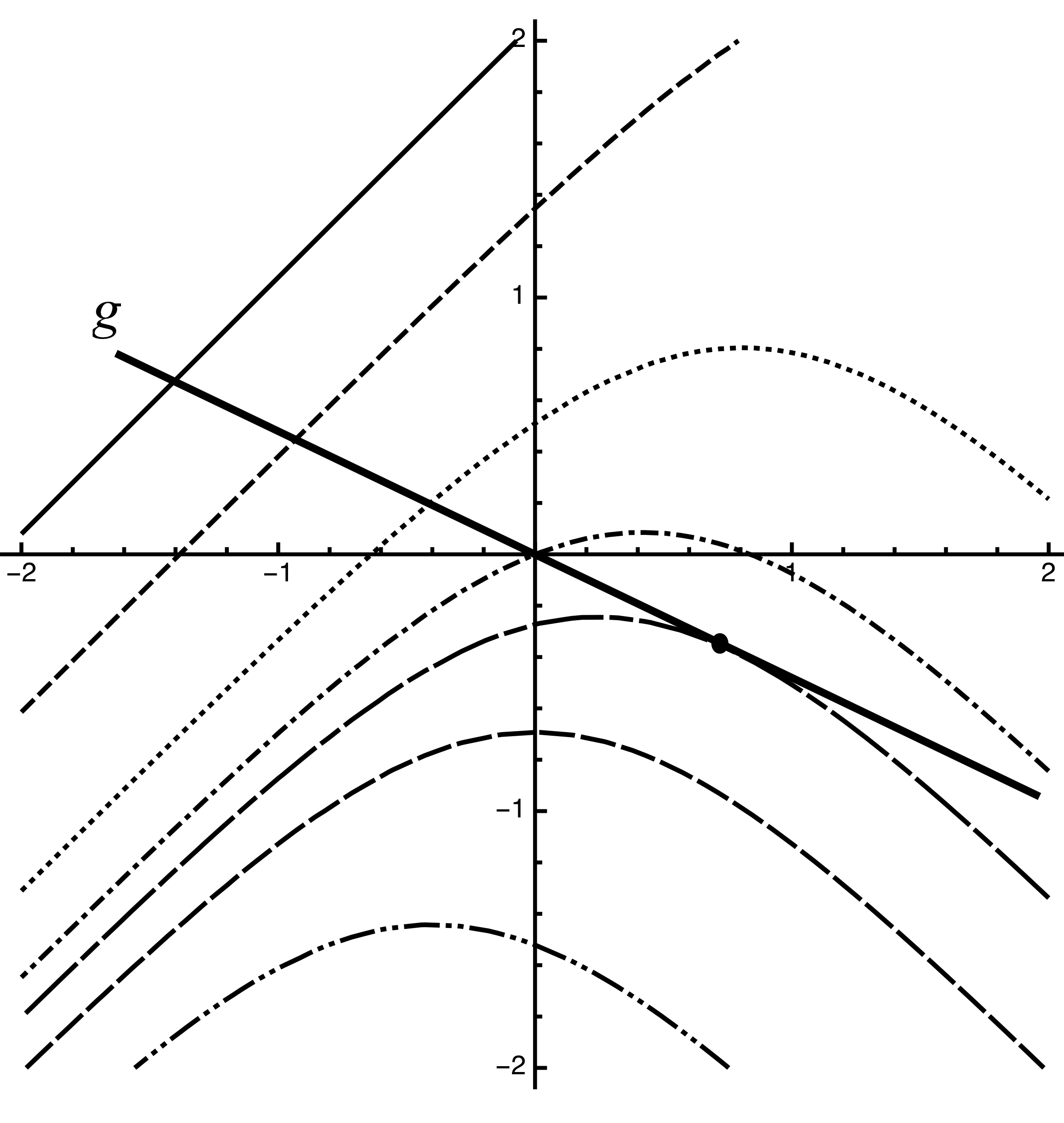}

\caption{\label{fig:Graph-of-1}The contour plot for the random walk model
with $c_{1}=0.1$, $c_{2}=0.5$ of $s\left(q,a\right)$ which is implicitly
defined by $\mathfrak{P}\left(s\left(q,a\right)\varphi+q\psi+a\1\right)=0$.
The fourth contour line is determined by $s\left(q,a(q)\right)=1$,
i.e.\ corresponds to height $\delta$ (defined in (\ref{eq:Hausdorff dim Bowen formula})).
The line $g$ goes through the origin with slope $-\alpha$.}
\end{figure}
\end{enumerate}
\end{thm}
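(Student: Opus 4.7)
The plan is to convert the series-based definition of $\delta_\alpha$ into an equation for the fibre-induced pressure, and then exploit Theorem~\ref{thm:fibre-induced pressure via base pressure} to pass to the classical pressure. The proof splits naturally into three steps.

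For (i), I would first rewrite the defining Poincar\'e series as
\[
\sum_{\substack{\omega\in I^\star \\ |S_\omega(\psi-\alpha)|\le K}} \e^{s\cdot S_\omega\varphi} \;=\; \sum_{n\ge 1} \zeta_n(s\varphi,\psi_\alpha,K),
\]
so that its convergence is governed by the sign of $\mathcal{P}(s\varphi,\psi_\alpha,K)$: a standard root-test argument gives convergence when this pressure is negative and divergence when it is positive, while at the critical value the divergence statement of Theorem~\ref{thm:fibre-induced pressure via base pressure} applies (note that $\psi_\alpha$ is constant on one-cylinders since $\psi$ is, and $0\in[\underline{\psi_\alpha},\overline{\psi_\alpha}]$ exactly when $\alpha\in[\underline{\psi},\overline{\psi}]$, so the hypotheses are satisfied throughout, including on the boundary). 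By Lemma~\ref{lem:strictly decreasing}, $s\mapsto\mathcal{P}(s\varphi,\psi_\alpha)$ is continuous and strictly decreasing from $+\infty$ to $-\infty$ (since $\varphi<0$), hence has a unique zero, which must coincide with $\delta_\alpha$.

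For (ii), once (i) is established the claim is a direct corollary of Theorem~\ref{thm:fibre-induced pressure via base pressure}: for $\alpha\in(\underline{\psi},\overline{\psi})$ we have $0\in(\underline{\psi_\alpha},\overline{\psi_\alpha})$, and the theorem furnishes a unique $q_\alpha=t(\delta_\alpha\varphi)\in\R$ such that $\mathcal{P}(\delta_\alpha\varphi,\psi_\alpha)=\mathfrak{P}(\delta_\alpha\varphi+q_\alpha\psi_\alpha)=0$ and $\int\psi_\alpha\,d\mu_{\delta_\alpha\varphi+q_\alpha\psi_\alpha}=0$. Writing the latter via the standard pressure-derivative identity $\partial_q\mathfrak{P}(\delta_\alpha\varphi+q\psi_\alpha)=\int\psi_\alpha\,d\mu_{\delta_\alpha\varphi+q\psi_\alpha}$ produces the vanishing-derivative equation, and it also yields $\int\psi\,d\mu_{\delta_\alpha\varphi+q_\alpha\psi_\alpha}=\alpha$. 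Uniqueness of the pair follows because strict convexity of $q\mapsto\mathfrak{P}(\delta\varphi+q\psi_\alpha)$ gives a unique minimiser for each $\delta$ and, by (i), a unique $\delta$ makes the minimum zero.

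For (iii), the normalisation $\mathfrak{P}(h+a)=\mathfrak{P}(h)+a$ shows that $s(q,-\alpha q)$ is precisely the unique value making $\mathfrak{P}(s\varphi+q\psi_\alpha)=0$. Because $q\mapsto\mathfrak{P}(\delta_\alpha\varphi+q\psi_\alpha)$ attains its minimum $0$ at $q_\alpha$ by (ii), and $s\mapsto\mathfrak{P}(s\varphi+q\psi_\alpha)$ is strictly decreasing, we get $s(q,-\alpha q)\ge\delta_\alpha$ for every $q$, with equality at $q_\alpha$; thus $\inf_r s(r,-\alpha r)=\delta_\alpha$. Convexity of $s$ comes from convexity of $\mathfrak{P}$ via the implicit function theorem, and the Legendre identity $\delta_\alpha=-\inf_r\widehat{s}(\alpha r,r)$ is the Lagrangian dual of the constrained minimisation $\inf\{s(q,a):a+\alpha q=0\}$, for which strong duality is standard. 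The geometric description then follows from implicit differentiation of the defining relation for $s$: at $(q_\alpha,-\alpha q_\alpha)$ one computes $\nabla s = -(\int\psi\,d\mu,1)/\int\varphi\,d\mu = -(\alpha,1)/\int\varphi\,d\mu$, which is orthogonal to the direction $(1,-\alpha)$ of $g$, and evaluating the tangent-plane affine form at the origin yields the claimed intercept. The main technical hurdle is in (i) at the boundary values $\alpha\in\{\underline{\psi},\overline{\psi}\}$, where Theorem~\ref{thm:fibre-induced pressure via base pressure} supplies the alternative formula $\mathcal{P}(\delta_\alpha\varphi,\psi_\alpha,K)=\mathfrak{P}(\delta_\alpha\varphi,I_0^{(\alpha)})$ with $I_0^{(\alpha)}=\{i\in I:\psi(i,\dots)=\alpha\}$, and one must transfer continuity and strict monotonicity from the classical pressure on the reduced full shift so that the unique zero still characterises $\delta_\alpha$.
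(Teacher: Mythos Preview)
Your proposal is correct and follows essentially the same approach as the paper: part~(i) via the root test on $\sum_n \zeta_n(s\varphi,\psi_\alpha,K)$ together with Lemma~\ref{lem:strictly decreasing}, part~(ii) by combining~(i) with Theorem~\ref{thm:fibre-induced pressure via base pressure}, and part~(iii) via the identification $s(r,-\alpha r)=\text{(unique zero of }s\mapsto\mathfrak{P}(s\varphi+r\psi_\alpha)\text{)}$ plus Legendre duality. The differences are cosmetic---the paper phrases~(ii) through the implicit curve $q\mapsto s(q)$ with $\mathfrak{P}(s(q)\varphi+q\psi_\alpha)=0$ and invokes Rockafellar's Theorem~23.5 for the Legendre step rather than Lagrangian duality---and your remarks on divergence at the critical value and on the boundary case, while correct, are not needed, since Lemma~\ref{lem:strictly decreasing} already covers $0\in[\underline{\psi_\alpha},\overline{\psi_\alpha}]$ and the definition of $\delta_\alpha$ as an infimum makes the behaviour of the series at the critical parameter irrelevant.
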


\begin{proof}
(i) To prove that the $\alpha$-Poincaré exponent coincides with the
zero of the fibre-induced pressure first note that for all $\alpha\in\big[\underline{\psi},\overline{\psi}\big]$
we have that there is a unique zero $\tilde{\delta}_{\alpha}\in\R$
of the function $s\mapsto\mathcal{P}\left(s\varphi,\psi_{\alpha}\right)$,
by Lemma \ref{lem:strictly decreasing}. By definition, we have $\mathcal{P}\left(s\varphi,\psi_{\alpha}\right)=\limsup_{n\to\infty}n^{-1}\log\sum_{{\omega\in\mathcal{C}_{n}\left(\psi_{\alpha},K\right)}}\e^{sS_{\omega}\varphi}<0$
for $s>\tilde{\delta}_{\alpha}$ and $K>0$. Hence, 
\[
\sum_{{\omega\in I^{\star}},\,{\left|S_{\omega}\left(\psi-\alpha\right)\right|\leq K}}\e^{sS_{\omega}\varphi}=\sum_{n\in\N}\sum_{{\omega\in\mathcal{C}_{n}\left(\psi_{\alpha},K\right)}}\e^{sS_{\omega}\varphi}=\sum_{n\in\N}\exp\left(n\cdot\frac{1}{n}\log\sum_{{\omega\in\mathcal{C}_{n}\left(\psi_{\alpha},K\right)}}\e^{sS_{\omega}\varphi}\right)<\infty
\]
implying $s\geq\delta_{\alpha}$, and by the arbitrariness of $s>\tilde{\delta}_{\alpha}$
we have $\tilde{\delta}_{\alpha}\geq\delta_{\alpha}.$ For the reverse
inequality assume $s<\tilde{\delta}_{\alpha}$ . Then $\mathcal{P}\left(s\varphi,\psi_{\alpha}\right)>0$.
Again by definition of the pressure, there exists a sequence $n_{j}\nearrow\infty$
such that $n_{j}^{-1}\log\sum_{{\omega\in\mathcal{C}_{n_{j}}\left(\psi_{\alpha},K\right)}}\e^{sS_{\omega}\varphi}>0$
for all $j\in\N$. Hence, 
\[
\sum_{{\omega\in I^{\star}},\,{\left|S_{\omega}\left(\psi-\alpha\right)\right|\leq K}}\e^{sS_{\omega}\varphi}=\sum_{n\in\N}\sum_{{\omega\in\mathcal{C}_{n}\left(\psi_{\alpha},K\right)}}\e^{sS_{\omega}\varphi}\geq\sum_{j\in\N}\exp\left(0\right)=\infty.
\]
This gives $s\leq\delta_{\alpha}$. Since this holds for every $s<\tilde{\delta}_{\alpha}$,
we conclude $\tilde{\delta}_{\alpha}\leq\delta_{\alpha}$.

(ii) By the Hölder continuity of $\varphi$ and $\psi$ we have that
the function $p:(s,t)\mapsto\text{\ensuremath{\mathfrak{P}}}\left(s\varphi+t\psi_{\alpha}\right)$
is convex and real-analytic with respect to both coordinates. For
fixed $q\in\R$ the function $s\mapsto p(s,q)$ is strictly monotonically
decreasing from $+\infty$ to $-\infty$, hence there is a unique
number $s\left(q\right)$ such that $p\left(s\left(q\right),q\right)=0$.
Also the function $s:q\mapsto s\left(q\right)$ is real-analytic and
convex by the Implicit Function Theorem and is tending to infinity
for $q\to\pm\infty$. Hence, there is a unique number $q_{\alpha}\in\R$
minimising $s(q)$, or equivalently, such that $s'\left(q_{\alpha}\right)=0=-\frac{\partial}{\partial q}p\left(s(q_{\alpha}),q_{\alpha}\right)=\int\psi_{\alpha}\,d\mu_{s(q_{\alpha})\varphi+q_{\alpha}\psi_{\alpha}}$.
To see that $\delta_{\alpha}$ in fact coincides with $s\left(q_{\alpha}\right)$
we combine this observation, our first alternative characterisation
of $\delta_{\alpha}$ and Theorem \ref{thm:fibre-induced pressure via base pressure}
(with $f=s\left(q_{\alpha}\right)\varphi$) to get
\[
\mathcal{P}(s\left(q_{\alpha}\right)\varphi,\psi_{\alpha})=\mathfrak{P}\left(s\left(q_{\alpha}\right)\varphi+q_{\alpha}\psi_{\alpha}\right)=0.
\]
Obviously, by the definition of $\psi_{\alpha}$ we also find $\int\psi\,d\mu_{s(q_{\alpha})\varphi+q_{\alpha}\psi_{\alpha}}=\alpha$.

(iii) For the last alternative characterisation we observe that the
contour lines of $s$ of height $d$ can be given explicitly by 
\[
a_{d}\left(q\right)\coloneqq-\mathfrak{P}\left(d\varphi+q\psi\right)\quad\text{with }a_{d}'\left(q\right)=-\int\psi\,d\mu_{d\varphi+q\psi}.
\]
Given $\alpha\in\big(\underline{\psi},\overline{\psi}\big)$, we find
that for $d=\delta_{\alpha}$ and $q=q_{\alpha}$ we have by (ii)
that the line through the origin with slope $-\alpha$ is tangential
to the contour line $a_{\delta_{\alpha}}$ of height $\delta_{\alpha}$
in the point $\left(q_{\alpha},-\alpha q_{\alpha}\right)$ (see Figure
\ref{fig:Graph-of-1}). This shows that 
\[
\delta_{\alpha}=\inf_{r\in\R}s\left(r,-\alpha r\right).
\]
Now, the gradient of $s$ in $\left(q_{\alpha},-\alpha q_{\alpha}\right)$
is $\left(\alpha t_{\alpha},t_{\alpha}\right)$ with $t_{\alpha}\coloneqq1/\int\varphi\,d\mu_{\delta_{\alpha}\varphi+q_{\alpha}\psi_{\alpha}}$
and hence, for all $t\in\R$ 
\[
s\left(q_{\alpha},-\alpha q_{\alpha}\right)\geq-\widehat{s}\left(\alpha t,t\right),
\]
with equality if $t=t_{\alpha}$ (see e.g.\ \cite[Theorem 23.5]{rockafellar1970}).
This proves $\delta_{\alpha}=\sup_{t\in\R}-\widehat{s}\left(\alpha t,t\right)$.
\end{proof}
\begin{rem}
\label{rem:delta_=00005Calpha as Birkhoff average}As a consequence
of Remark \ref{rem:Real-Analytic}, the fact that $\varphi<0$, and
the Implicit Function Theorem, we have that the mapping $\alpha\mapsto\delta_{\alpha}$
is real-analytic on $\big(\underline{\psi},\overline{\psi}\big)$.
Also note that the characterisations in Theorem \ref{thm:critical exponent via base pressures}
in particular show that we have the following multifractal spectral
identity
\[
\delta_{\alpha}=\dim_{H}\left\{ \pi\left(\omega\right)\colon\omega\in\Sigma,\,\lim_{n\rightarrow\infty}\frac{S_{n}(\psi\circ\pi)\left(\omega\right)}{n}=\alpha\right\} .
\]
See also \cite{MR1858487,MR1837214,MR1916371,MR3286501,JordanRams2017}
and Remark \ref{rem:Multifractal_Alternative_Proof}.
\end{rem}

We are now in the position to prove Theorems \ref{thm:-dimension gap}
and \ref{thm: Multifractal Decomposition} stated in the introduction.
\begin{proof}
[Proof of Theorem \ref{thm:-dimension gap}]By definition of $\delta$
and Theorem \ref{thm:critical exponent via base pressures}, we have
$\delta_{\alpha}=\delta>0$ if and only if $\mathfrak{P}(\delta\varphi)=0=\mathcal{P}\left(\delta\varphi,\psi_{\alpha}\right)$.
Hence by Corollary \ref{cor:full pressure if no drift}, this holds
if and only if $\mu_{\delta\varphi}(\psi_{\alpha})=0$ which is equivalent
to $\mu_{\delta\varphi}\left(\psi\right)=\alpha$.
\end{proof}
Before we proceed with the proof of Theorem \ref{thm: Multifractal Decomposition},
let us fix some further properties of $\alpha\mapsto\delta_{\alpha}$
in the following proposition.
\begin{prop}
\label{prop:Properties of delta_alpha}If $\underline{\psi}<\overline{\psi}$,
then the function $\alpha\mapsto\delta_{\alpha}$ given by the $\alpha$-Poincaré
exponent is strictly increasing on $\left(\underline{\psi},\alpha_{\text{max}}\right)$
and strictly decreasing on $\left(\alpha_{\text{max}},\overline{\psi}\right)$
and hence has a unique maximum in $\alpha_{\text{max }}\coloneqq\mu_{\delta\varphi}\left(\psi\right)$
with value $\delta$. Further, we have 
\[
\frac{d}{d\alpha}\delta_{\alpha}=\frac{q_{\alpha}}{\mu_{\delta_{\alpha}\varphi+q_{\alpha}\psi_{\alpha}}\left(\varphi\right)}
\]
and in particular, 
\[
\lim_{\alpha\to\overline{\psi}}\frac{d}{d\alpha}\delta_{\alpha}=-\infty\quad\text{and}\quad\lim_{\alpha\to\underline{\psi}}\frac{d}{d\alpha}\delta_{\alpha}=+\infty.
\]
\end{prop}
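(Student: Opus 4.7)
The plan is to extract the derivative formula by applying the envelope theorem to the characterisation of $\delta_\alpha$ in Theorem~\ref{thm:critical exponent via base pressures}(ii), deduce strict monotonicity from the resulting expression via the sign of $q_\alpha$, and finally obtain the boundary blow-ups by a compactness argument on the associated Gibbs measures.

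For the derivative formula, fix $\alpha\in(\underline{\psi},\overline{\psi})$ and recall from the proof of Theorem~\ref{thm:critical exponent via base pressures}(ii) that the auxiliary function $q\mapsto s(q;\alpha)$, implicitly defined by $\mathfrak{P}(s(q;\alpha)\varphi+q\psi_{\alpha})=0$, is real-analytic and strictly convex in $q$, with unique minimiser $q_\alpha$ and minimum value $\delta_\alpha$. Differentiating this implicit equation in $\alpha$, together with the standard formula $\partial_t\mathfrak{P}(f+tg)|_{t=0}=\mu_f(g)$, yields
\begin{equation*}
\partial_\alpha s(q;\alpha)=\frac{q}{\mu_{s(q;\alpha)\varphi+q\psi_\alpha}(\varphi)}.
\end{equation*}
Since $\partial_q s(q_\alpha;\alpha)=0$, the envelope theorem gives
\begin{equation*}
\frac{d\delta_\alpha}{d\alpha}=\partial_\alpha s(q_\alpha;\alpha)=\frac{q_\alpha}{\mu_{\delta_\alpha\varphi+q_\alpha\psi_\alpha}(\varphi)},
\end{equation*}
which is the claimed identity.

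For strict monotonicity, note that $\varphi$ is continuous and strictly negative on the compact space $\Sigma$, so $\mu_{\delta_\alpha\varphi+q_\alpha\psi_\alpha}(\varphi)\le -c<0$ uniformly in $\alpha$, and thus $\sgn(d\delta_\alpha/d\alpha)=-\sgn(q_\alpha)$. The equation $q_\alpha=0$ combined with Theorem~\ref{thm:critical exponent via base pressures}(ii) forces simultaneously $\mathfrak{P}(\delta_\alpha\varphi)=0$ (so $\delta_\alpha=\delta$) and $\mu_{\delta\varphi}(\psi-\alpha)=0$ (so $\alpha=\mu_{\delta\varphi}(\psi)=\alpha_{\max}$), while this choice conversely satisfies the system. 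The Implicit Function Theorem applies to the system defining $(\delta_\alpha,q_\alpha)$ because its Jacobian in $(\delta,q)$, which at the critical point is upper triangular with diagonal entries $\mu(\varphi)<0$ and the asymptotic variance $\sigma^2_\mu(\psi_\alpha)>0$ (positive since $\psi$ is not cohomologous to a constant when $\underline{\psi}<\overline{\psi}$), is non-singular. Hence $\alpha\mapsto q_\alpha$ is continuous with exactly one zero at $\alpha_{\max}$, and together with the global bound $\delta_\alpha\le\delta=\delta_{\alpha_{\max}}$ from Theorem~\ref{thm:critical exponent via base pressures}(iii) we obtain $q_\alpha<0$ on $(\underline{\psi},\alpha_{\max})$ and $q_\alpha>0$ on $(\alpha_{\max},\overline{\psi})$, yielding the desired strict monotonicity.

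For the boundary behaviour it suffices to show $q_\alpha\to+\infty$ as $\alpha\to\overline{\psi}^-$, the case $\alpha\to\underline{\psi}^+$ being symmetric. Assume for contradiction that $q_{\alpha_n}$ remains bounded along some sequence $\alpha_n\to\overline{\psi}$. Rewriting the defining identity as $\mathfrak{P}(\delta_{\alpha_n}\varphi+q_{\alpha_n}\psi)=q_{\alpha_n}\alpha_n$ and using that $s\mapsto\mathfrak{P}(s\varphi+q\psi)$ is strictly decreasing and proper for each fixed $q$ (because $\varphi$ is bounded away from zero), $\delta_{\alpha_n}$ must also remain bounded. Extracting a subsequence gives $(\delta_{\alpha_n},q_{\alpha_n})\to(\delta^*,q^*)$ and, by the continuous dependence of Gibbs measures on the H\"older potential, $\mu_{\delta_{\alpha_n}\varphi+q_{\alpha_n}\psi_{\alpha_n}}\to\mu_{\delta^*\varphi+q^*\psi}$ weakly. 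Since this limit measure has full support on $\Sigma$ and $\psi$ is not globally constant, $\mu_{\delta^*\varphi+q^*\psi}(\psi)<\overline{\psi}$, contradicting $\mu_{\delta_{\alpha_n}\varphi+q_{\alpha_n}\psi_{\alpha_n}}(\psi)=\alpha_n\to\overline{\psi}$. The main delicate step in the overall argument is exactly this compactness/continuity argument at the endpoints; the preceding parts are direct consequences of the envelope theorem and the real-analyticity furnished by Theorem~\ref{thm:critical exponent via base pressures}.
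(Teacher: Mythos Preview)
Your proof is correct and, for the derivative formula, essentially coincides with the paper's: both differentiate the identity $\mathfrak{P}(\delta_\alpha\varphi+q_\alpha\psi_\alpha)=0$ in $\alpha$ and use that the $q$-derivative vanishes at the minimiser (you call this the envelope theorem, the paper just expands the chain rule and cancels the $\frac{dq_\alpha}{d\alpha}$ terms via $\int\psi\,d\mu=\alpha$).

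For the sign of $q_\alpha$ you take a genuinely different route. The paper argues geometrically via the contour lines $a_c(q)$ of the function $s$ from Theorem~\ref{thm:critical exponent via base pressures}(iii): since the level curve at height $c<\delta$ meets the $a$-axis below the origin, the tangent line of slope $-\alpha$ through the origin touches it on the side dictated by the sign of $\alpha-\alpha_{\max}$, which pins down the sign of $q_\alpha$ directly. You instead establish that $\alpha\mapsto q_\alpha$ is continuous with a unique zero at $\alpha_{\max}$ (via the Implicit Function Theorem) and then use the a priori bound $\delta_\alpha\le\delta$ to rule out the wrong sign on each side. Both arguments are short; the paper's is more pictorial, yours is more self-contained and avoids invoking part~(iii). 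One minor slip: your Jacobian is lower triangular (the $(1,2)$ entry $\partial_q\mathfrak{P}=\mu(\psi_\alpha)$ vanishes), not upper triangular, but of course the determinant $\mu(\varphi)\cdot\sigma^2_\mu(\psi_\alpha)\neq 0$ is unaffected.

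For the boundary limits $|q_\alpha|\to\infty$ you actually supply more than the paper does: the paper's proof stops after determining the sign of $q_\alpha$ and leaves the ``in particular'' assertion implicit. Your compactness argument (bounded $q_{\alpha_n}$ forces bounded $\delta_{\alpha_n}$, hence a weak$^*$ limit Gibbs measure with full support satisfying $\mu(\psi)=\overline{\psi}$, impossible since $\psi$ is locally constant and takes at least two values) is a clean way to close this gap.
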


\begin{proof}
First, for $\alpha\in\big(\underline{\psi},\overline{\psi}\big)$,
we check that with $\mu=\mu_{\delta_{\alpha}\varphi+q_{\alpha}\psi_{\alpha}}$,
\[
\frac{\partial}{\partial\alpha}\left(\mathfrak{P}(\delta_{\alpha}\varphi+q_{\alpha}\psi_{\alpha})\right)=\frac{d}{d\alpha}\delta_{\alpha}\int\varphi d\mu+\frac{d}{d\alpha}q_{\alpha}\int\psi d\mu-\frac{d}{d\alpha}q_{\alpha}\alpha-q_{\alpha}.
\]
By Theorem \ref{thm:critical exponent via base pressures}, we conclude
that $\frac{\partial}{\partial\alpha}\left(\mathfrak{P}(\delta_{\alpha}\varphi+q_{\alpha}\psi_{\alpha})\right)=0$.
Hence, $q_{\alpha}=\frac{d}{d\alpha}\delta_{\alpha}\int\varphi d\mu$.
We are left to prove the monotonicity. We find that the contour lines
of $s$ given by the function $q\mapsto a_{c}\left(q\right)$ implicitly
via $s\left(q,a_{c}(q)\right)=c$ intersect the $a$-axis in $-\mathfrak{P}\left(c\varphi\right)$
for all $c\in\R$. Since $a_{\delta}\left(0\right)=0$, $a_{\delta}'\left(0\right)=-\alpha_{\text{max}}$
and $-\mathfrak{P}\left(c\varphi\right)<0$ for $c<\delta$, we find
-- using the characterisation of $\delta_{\alpha}$ from Theorem
\ref{thm:critical exponent via base pressures} (iii) -- that $q_{\alpha}<0$
for $\alpha\in\left(\underline{\psi},\alpha_{\text{max}}\right)$
and $q_{\alpha}>0$ for $\alpha\in\left(\alpha_{\text{max}},\overline{\psi}\right)$.
\end{proof}
\begin{proof}
[Proof of Theorem  \ref{thm: Multifractal Decomposition}] For the
upper bound we use a straight forward covering argument and the definition
of the $\alpha$-Poincaré exponent. The lower bound uses the fibre-induced
pressure and a concrete construction of a Frostman measure inspired
by \cite{MR1484767}. Let 
\[
L_{n}(\alpha,k):=\bigcup_{\omega\in\mathcal{C}_{n}\left(\psi_{\alpha},k\right)}\pi\left[\omega\right],
\]
for $k\in\N$ and set 
\[
L(\alpha,k)\coloneqq\limsup_{n\rightarrow\infty}L_{n}(\alpha,k)=\bigcap_{n\in\N}\bigcup_{m\geq n}L_{m}(\alpha,k).
\]
Clearly, $\mathbf{E}\left(\alpha\right)\subset\bigcup_{k\in\N}L(\alpha,k)$.
By the stability of the Hausdorff dimension it suffices to prove that
$\dim_{H}\left(L(\alpha,k)\right)\le\delta_{\alpha}$ for every $k\in\N$.
For every $s>\delta_{\alpha}$, we have by the definition of the Poincaré
exponent for any $N\in\N$ 
\[
\sum_{n\geq N}\sum_{\omega\in\mathcal{C}_{n}\left(\psi_{\alpha},k\right)}\left|\pi\left[\omega\right]\right|^{s}\leq\sum_{n\geq N}\sum_{\omega\in\mathcal{C}_{n}\left(\psi_{\alpha},k\right)}\e^{S_{\omega}s\varphi}\leq\sum_{\omega\in\mathcal{C}\left(\psi_{\alpha},k\right)}\e^{S_{\omega}s\varphi}<\infty.
\]
This shows that the $s$-dimensional Hausdorff measure of $L\left(\alpha,k\right)$
is finite and hence $\dim_{H}(L(\alpha,k))\le s$ for every $s>\delta_{\alpha}$
giving the upper bound.

For the lower bound we start with the case $\alpha\in\big\{\underline{\psi},\overline{\psi}\big\}$.
For $I_{0}\coloneqq\left\{ i\in I\colon\psi_{\alpha}\left(i,\ldots\right)=0\right\} $
we clearly have $\pi\left(I_{0}^{\N}\right)\subset\mathbf{E}_{u}\left(\alpha\right)$.
By Theorem \ref{thm:fibre-induced pressure via base pressure}, we
have $0=\mathcal{P}\left(\delta_{\alpha}\varphi,\psi_{\alpha}\right)=\mathfrak{P}(\delta_{\alpha}\varphi,I_{0})$
and hence by Bowen's formula $\dim_{H}\left(\pi\left(I_{0}^{\N}\right)\right)=\delta_{\alpha}$
providing the lower bound for this case. For $\alpha\in\big(\underline{\psi},\overline{\psi}\big)$
we consideronly the case $\delta_{\alpha}>0$. It then suffices to
construct for every positive $s<\delta_{\alpha}$ a subset $C\subset\mathbf{E}_{u}\left(\alpha\right)$
such that $\dim_{H}C>s$. Fix such positive $s<\delta_{\alpha}$ and
let $\epsilon>0$. Then $\mathcal{P}\left(s\varphi,\psi_{\alpha}\right)>0$
by Theorem \ref{thm:critical exponent via base pressures} (i). Hence,
for $\epsilon>0$, 
\[
\limsup_{n\to\infty}\sum_{\omega\in\mathcal{C}_{n}\left(\psi_{\alpha},\epsilon\right)}\e^{sS_{\omega}\varphi}=\infty.
\]
Hence, for arbitrarily large $M>0$ we find $\ell\in\N$ with $\sum_{\omega\in\mathcal{C}_{\ell}\left(\psi_{\alpha},\epsilon\right)}\e^{sS_{\omega}\varphi}>M$.
Set $\Gamma:=\mathcal{C}_{\ell}\left(\psi_{\alpha},\epsilon\right)$.
Further, let $\Lambda\subset I^{\star}$ denote the finite set of
words of length less than $r\in\N$ witnessing the conditions stated
in Lemma \ref{lem:D} for $N=2\epsilon$. Set $\Gamma_{1}\coloneqq\Gamma$.
Then for every $\omega\in\Gamma_{1}$ we have $\left|S_{\omega}\psi_{\alpha}\right|\leq\epsilon$.
Suppose we have defined $\Gamma_{n}$ such that $\left|S_{\omega}\psi_{\alpha}\right|\leq\epsilon$
for all $\omega\in\Gamma_{n}$. Then we define inductively $\Gamma_{n+1}$
as follows: Since for $\omega\in\Gamma_{n}$ we have $\left|S_{\omega}\psi_{\alpha}\right|\leq\epsilon$
it follows that $\left|S_{\omega\nu}\psi_{\alpha}\right|\leq2\epsilon$
for every $\nu\in\Gamma$. Then we find an element $\tau_{\omega\nu}\in\Lambda$
such that $\left|S_{\omega\nu\tau_{\omega\nu}}\psi_{\alpha}\right|\leq\epsilon$
and we set
\[
\Gamma_{n+1}\coloneqq\left\{ \omega\nu\tau_{\omega\nu}:\omega\in\Gamma_{n},\nu\in\Gamma\right\} .
\]
Next, using Hölder continuity of $\varphi$ and bounded distortion,
we find suitable constants $c_{i}$, $i=1,\ldots,5$, such that for
all $\omega\in\Gamma_{n}$, $n\in\N$, 
\begin{align}
\sum_{\nu\tau_{\omega\nu}:\omega\nu\tau_{\omega\nu}\in\Gamma_{n+1}}\left|\pi\left[\omega\nu\tau_{\omega\nu}\right]\right|^{s} & \geq c_{1}\sum_{\nu\tau_{\omega\nu}:\omega\nu\tau_{\omega\nu}\in\Gamma_{n+1}}\e^{sS_{\omega\nu\tau_{\omega\nu}}\varphi}\geq c_{2}\e^{sS_{\omega}\varphi}\sum_{\nu\tau_{\omega\nu}:\omega\nu\tau_{\omega\nu}\in\Gamma_{n+1}}\e^{sS_{\nu\tau_{\omega\nu}}\varphi}\nonumber \\
\nonumber \\
 & \geq c_{3}\e^{sS_{\omega}\varphi}\sum_{\gamma\in\Gamma}\e^{sS_{\gamma}\varphi}\geq c_{4}M\e^{sS_{\omega}\varphi}\geq c_{5}M\left|\pi\left[\omega\right]\right|^{s}\geq\left|\pi\left[\omega\right]\right|^{s},\label{eq:GeometricInequality}
\end{align}
where the last inequality holds for $\ell$, and hence for $M$, chosen
sufficiently large.

In the next step we define the Cantor set 
\[
C\coloneqq\limsup_{n\rightarrow\infty}\bigcup_{\omega\in\Gamma_{n}}\pi\left[\omega\right]
\]
and a probability measure $\mu$ supported on $C$ defined by its
marginals as follows: $\mu\left(\left[0,1\right]\right)=1$ and for
$\omega\in\Gamma_{n}$ and $\omega\nu\tau_{\omega\nu}\in\Gamma_{n+1}$
\[
\mu\left(\pi\left[\omega\nu\tau_{\omega\nu}\right]\right)\coloneqq\frac{\left|\pi\left[\omega\nu\tau_{\omega\nu}\right]\right|^{s}}{\sum_{\widetilde{\omega}\in\Gamma_{n+1},\left[\widetilde{\omega}\right]\subset\left[\omega\right]}\left|\pi\left[\widetilde{\omega}\right]\right|^{s}}\mu\left(\pi\left[\omega\right]\right).
\]
Note that the existence of $\mu$ is guaranteed by Kolmogorov's Consistency
Theorem. Inductively, using the definition of $\mu$ in tandem with
inequality (\ref{eq:GeometricInequality}) we verify that for all
$n\in\N$ and $\omega\in\Gamma_{n}$ we have 
\[
\mu\left(\pi\left[\omega\right]\right)\leq\left|\pi\left[\omega\right]\right|^{s}.
\]
For each interval $L\subset\left[0,1\right]$ we let 
\begin{multline*}
\Gamma\left(L\right)\coloneqq\bigg\{\omega\in\Gamma_{n}\colon n\in\N,\;\pi\left[\omega\right]\cap L\neq\emptyset,\;\left|\pi\left[\omega\right]\right|\text{\ensuremath{\leq}}\left|L\right|\text{ and }\widetilde{\omega}\in\Gamma_{n-1}:\left[\widetilde{\omega}\right]\supset\left[\omega\right]\Rightarrow\left|\pi\left[\widetilde{\omega}\right]\right|>\left|L\right|\bigg\}.
\end{multline*}
Since 
\[
\mu\left(L\right)\leq\sum_{\omega\in\Gamma\left(L\right)}\mu\left(\pi\left[\omega\right]\right)\leq\sum_{\omega\in\Gamma\left(L\right)}\left|\pi\left[\omega\right]\right|^{s}\leq\card\left(\Gamma\left(L\right)\right)\left|L\right|^{s}
\]
and $\card\left(\Gamma\left(L\right)\right)\leq2\cdot\card\left(I\right)^{\ell+r}$,
the Mass Distribution Principle gives 
\[
\dim_{H}\left(C\right)\geq s.
\]
Since $s<\delta_{\alpha}$ was arbitrary,\textcolor{red}{{} }and $\mu\left(C\right)=\mu\left(C\setminus\mathcal{D}\right)$
the desired lower bound is established.
\end{proof}
\begin{rem}
\label{rem:Multifractal_Alternative_Proof} Let $\mu_{\alpha}:=\mu_{\delta_{\alpha}\varphi+q\psi_{\alpha}}\circ\pi^{-1}$.
Since $\mu_{\delta_{\alpha}\varphi+q\psi_{\alpha}}(\psi_{\alpha})=0$,
Lemma \ref{lem:nodrift implies recurrence} implies that $\mu_{\alpha}\left(\mathbf{E}(\alpha)\right)=1$.
By Young's formula we have $\dim_{H}(\mu_{\alpha})=\delta_{\alpha}$,
which then gives an alternative proof for the lower bound of the Hausdorff
dimension of $\mathbf{E}\left(\alpha\right)$.
\end{rem}

\begin{proof}
[Proof of Theorem  \ref{thm:Bishop and Jones}] First suppose that
$0\in\big[\underline{\psi},\overline{\psi}\big]$. Then this corollary
is a special case of Theorem \ref{thm: Multifractal Decomposition}
with $\alpha=0$ by noting that $\mathbf{R}=\mathbf{E}\left(0\right)$
and $\mathbf{R}_{u}=\mathbf{E}_{u}\left(0\right)$. If $0\notin\big[\underline{\psi},\overline{\psi}\big]$,
then $\mathbf{R}=\mathbf{E}\left(0\right)=\mathbf{R}_{u}=\mathbf{E}_{u}\left(0\right)=\emptyset$
and $\delta_{0}=0$.
\end{proof}
\begin{proof}
[Proof of Theorem  \ref{thm:-Transient-sets}]We only consider the
first case, the second case follows in exactly the same manner. For
this we assume $\alpha=\mu_{\delta\varphi}\left(\psi\right)\ge0$.
Since $\mathbf{E}_{u}\left(\alpha'\right)\subset\mathbf{T}_{1}^{+}\subset\mathbf{T}_{2}^{+}$
for all $\alpha'>0$, we obtain 
\begin{align*}
\delta & =\dim_{H}\left(\mathbf{E}_{u}\left(\alpha\right)\right)=\sup_{\alpha'>\alpha}\dim_{H}\left(\mathbf{E}_{u}\left(\alpha'\right)\right)\leq\dim_{H}\left(\mathbf{T}_{1}^{+}\right)\leq\dim_{H}\left(\mathbf{T}_{2}^{+}\right)\leq\delta.
\end{align*}
For the set $\mathbf{T}_{3}^{+}\left(r\right)$, $r\in\R$ note that
\[
\bigcup_{\ell\geq0}\FI_{\Psi}^{-\ell}\left(\mathbf{T}_{3}^{+}\left(r\right)\right)\supset\mathbf{T}_{1}^{+}.
\]
Since for each $\ell\in\N$ the set $F_{\Psi}^{-\ell}\left(\mathbf{T}_{3}^{+}\left(r\right)\right)$
is a countable union of bi-Lipschitz images of $\mathbf{T}_{3}^{+}\left(r\right)$,
\begin{align*}
\delta & =\dim_{H}\left(\mathbf{T}_{1}^{+}\right)\leq\dim_{H}\left(\bigcup_{\ell\geq0}\FI_{\Psi}^{-\ell}\left(\mathbf{T}_{3}^{+}\left(r\right)\right)\right)\\
 & =\sup_{\ell\geq0}\dim_{H}\left(\FI_{\Psi}^{-\ell}\left(\mathbf{T}_{3}^{+}\left(r\right)\right)\right)=\dim_{H}\left(\mathbf{T}_{3}^{+}\left(r\right)\right)\leq\delta.
\end{align*}
Similarly, we have $\mathbf{E}\left(\alpha'\right)\subset\mathbf{T}_{1}^{-}\subset\mathbf{T}_{2}^{-}$
for all $\alpha'<0$ and hence, by Theorem \ref{thm: Multifractal Decomposition},
giving the lower bound
\[
\dim_{H}\left(\mathbf{E}\left(0\right)\right)=\delta_{0}=\sup_{\alpha'<0}\delta_{\alpha'}=\sup_{\alpha'<0}\dim_{H}\left(\mathbf{E}\left(\alpha'\right)\right)\leq\dim_{H}\left(\mathbf{T}_{1}^{-}\right)\leq\dim_{H}\left(\mathbf{T}_{2}^{-}\right).
\]
And for $\mathbf{T}_{3}^{-}\left(r\right),$ $r\in\R$ similarly as
above 
\[
\dim_{H}\left(\mathbf{E}\left(0\right)\right)\leq\dim_{H}\left(\mathbf{T}_{1}^{-}\right)\leq\sup_{\ell\geq0}\dim_{H}\left(\FI_{\Psi}^{-\ell}\left(\mathbf{T}_{3}^{-}\left(r\right)\right)\right)=\dim_{H}\left(\mathbf{T}_{3}^{-}\left(r\right)\right).
\]
For the upper bound we assume without loss of generality that $\delta_{0}<\delta$
(otherwise nothing is to be shown) and fix $0<\varepsilon<\delta-\delta_{0}$.
By Proposition \ref{prop:Properties of delta_alpha}, we have $q_{0}\le0$.
Hence, we obtain the following bound for $n\in\Z$, 
\begin{align*}
\sum_{\substack{\left|\omega\right|>N\\
S_{\omega}\psi<-n
}
}\left|\pi\left[\omega\right]\right|^{\delta_{0}+\epsilon}\le & \phantom{{\e^{-q_{0}n}}}\sum_{\substack{\left|\omega\right|>N\\
S_{\omega}\psi<-n
}
}\e^{(\delta_{0}+\epsilon)S_{\omega}\varphi+q_{0}S_{\omega}\psi}\e^{-q_{0}S_{\omega}\psi}\\
\le & \e^{-q_{0}n}\sum_{\substack{\left|\omega\right|>N\\
S_{\omega}\psi<-n
}
}\e^{(\delta_{0}+\epsilon)S_{\omega}\varphi+q_{0}S_{\omega}\psi_{0}}\\
\leq & \e^{-q_{0}n}\sum_{\substack{\left|\omega\right|>N}
}\e^{(\delta_{0}+\epsilon)S_{\omega}\varphi+q_{0}S_{\omega}\psi_{0}}<\infty,
\end{align*}
which shows $\dim_{H}\left(\mathbf{T}_{3}^{-}\left(r\right)\cap\left[n-r-1,n-r\right]\right)\leq\delta_{0}$,
for all $n\in\Z$. As a consequence of the countable stability of
the Hausdorff dimension we obtain $\dim_{H}\left(\mathbf{T}_{3}^{-}\left(r\right)\right)\leq\delta_{0}$.
In particular, for $r\in\Z$ we use (\ref{eq:T2UnionT3}) and the
countable stability of the Hausdorff dimension once more to finish
the proof.
\end{proof}
\begin{rem}
The last upper bound in the above proof could also been seen via the
general multifractal formalism for limiting behaviour of $\left(S_{n}\psi/S_{n}\varphi\right)$
provided e.g. in \cite{MR2719683,MR2672614} by observing the inclusion,
for $r\in\R$ and $n\in\Z$, 
\[
\mathbf{T}_{3}^{-}\left(r\right)\cap\left[n,n+1\right]\subset\pi\left\{ \omega\in\Sigma\colon\liminf_{\ell\rightarrow\infty}\frac{S_{\ell}\psi}{S_{\ell}\varphi}\leq0\right\} +n.
\]
Let us consider the implicitly defined function $s:\R\to\R$ by $\mathfrak{P}\left(s(q)\varphi+q\psi\right)=0$
as in the proof of Theorem \ref{thm:critical exponent via base pressures}.
By the general multifractal formalism, for $\alpha=\mu_{\delta\varphi}\left(\psi\right)\ge0$,
we have 
\[
\dim_{H}\left(\pi\left\{ \omega\in\Sigma\colon\liminf_{\ell\rightarrow\infty}\frac{S_{\ell}\psi}{S_{\ell}\varphi}\le0\right\} \right)=\inf_{q\in\R}s\left(q\right).
\]
This is to say that we have to find $s_{0}$ and $q_{0}$ with $\mathfrak{P}\left(s_{0}\varphi+q_{0}\psi\right)=0$
and $s'\left(q_{0}\right)=\frac{\partial}{\partial q}\mathfrak{P}\left(s_{0}\varphi+q\psi\right)|_{q=q_{0}}=0$.
By Theorem \ref{thm:critical exponent via base pressures}, this shows
$\inf_{q\in\R}s\left(q\right)=\delta_{0}$. The upper bound then follows
from the countable stability of the Hausdorff dimension.
\end{rem}

\section{Examples and an application to Kleinian groups\label{sec:Examples}}

Let us consider an interval map $F$ with two expansive branches with
slopes $1/c_{1}$ and $1/c_{2}$, respectively, where $c_{1},c_{2}\in(0,1)$
with $c_{1}+c_{2}\leq1$ (see for instance Example \ref{exa:Classical Random Walk}
in the introduction). Then the corresponding geometric potential is
given by $\varphi\left(\omega\right)\coloneqq\log\left(c_{\omega_{1}}\right)$
for $\omega=\left(\omega_{1},\omega_{2},\ldots\right)$. Moreover,
note that $\delta$ solves the Moran-Hutchinson formula $c_{1}^{\delta}+c_{2}^{\delta}=1.$
We determine the dimension spectrum $\alpha\mapsto\delta_{\alpha}$
of the escaping sets for $\FI_{\Psi}$ for different parameters $c_{1},c_{2}$
and step length functions $\Psi$. In the following we also make the
convention that $0\cdot\log0=0$.
\begin{example}
First, we consider arbitrary $c_{1},c_{2}\in(0,1)$ and pick a symmetric
step length function $\Psi$ such that $\psi\left(\omega\right)=\left(-1\right)^{\omega_{1}}$.
Then we have to solve the two equations for $\alpha\in(-1,1)$: 
\begin{align*}
1 & =\exp\left(\mathfrak{P}\left(s\varphi+q\psi_{\alpha}\right)\right)=\e^{-q\alpha}\left(c_{1}^{s}\e^{-q}+c_{2}^{s}\e{}^{q}\right)\eqqcolon z_{\alpha}(s,q)
\end{align*}
and 
\[
0=\frac{\partial z_{\alpha}}{\partial q}(s,q)=-\alpha+\e^{-q\alpha}\left(-c_{1}^{s}\e^{-q}+c_{2}^{s}\e{}^{q}\right).
\]
Using Theorem \ref{thm:critical exponent via base pressures} (i)\&(ii),
this gives for $\alpha\in[-1,1]$ 
\[
\delta_{\alpha}=\delta_{\alpha}(c_{1},c_{2})=-\frac{\left(\frac{1+\alpha}{2}\right)\log\left(\frac{1+\alpha}{2}\right)+\left(\frac{1-\alpha}{2}\right)\log\left(\frac{1-\alpha}{2}\right)}{\left(\frac{1+\alpha}{2}\right)\log(1/c_{1})+\left(\frac{1-\alpha}{2}\right)\log(1/c_{2})}.
\]
Note that this expression is in fact the quotient of the measure-theoretic
entropy over the Lyapunov exponent of $F$ with respect to the $\left(\frac{1+\alpha}{2},\frac{1-\alpha}{2}\right)$-Bernoulli
measure. Moreover, 
\[
\lim_{\alpha\searrow-1}\frac{d}{d\alpha}\delta_{\alpha}=\infty\quad\text{and}\quad\lim_{\alpha\nearrow1}\frac{d}{d\alpha}\delta_{\alpha}=-\infty,
\]
see Figure \ref{fig:asymmetric-Random-Walk-Spectrum} for the graph
of $\alpha\mapsto\delta_{\alpha}$. In order to determine the Hausdorff
dimension of the (uniformly) recurrent set we need to determine $\delta_{0}$
depending on the parameters $\left(c_{1},c_{2}\right)$. We obtain
\[
\delta_{0}\left(c_{1},c_{2}\right)=\frac{\log4}{\log(1/c_{1})+\log(1/c_{2})},
\]
see Figure \ref{fig:delta_0Graph} for a one-parameter plot of $c\mapsto\delta_{0}\left(c,1-c\right)$.

Finally, for $\alpha=0$, using Theorem \ref{thm:fibre-induced pressure via base pressure}
we obtain for the fibre-induced pressure 
\[
\mathcal{P}\left(t\varphi,\psi\right)=\log2+t\cdot\left(\log c_{1}+\log c_{2}\right)/2.
\]
\end{example}

\begin{example}
\label{exa:asymmetric setp}Here, we set $c_{1}=c_{2}=c$ and consider
$\Psi$ such that $\psi\left(\omega\right)=m_{1}(2-\omega_{1})+m_{2}(\omega_{1}-1)$
with $m_{1}<m_{2}$ (a-)symmetric. A similar calculation as in the
first example leads us to the dimension spectrum
\[
\delta_{\alpha}=\delta_{\alpha}(c,m_{1},m_{2})=-\frac{\left(\frac{\alpha-m_{1}}{m_{2}-m_{1}}\right)\log\left(\frac{\alpha-m_{1}}{m_{2}-m_{1}}\right)+\left(\frac{m_{2}-\alpha}{m_{2}-m_{1}}\right)\log\left(\frac{m_{2}-\alpha}{m_{2}-m_{1}}\right)}{\log(1/c)},
\]
for $\alpha\in[m_{1},m_{2}]$. Again, note that this expression is
the quotient of the measure-theoretic entropy over the Lyapunov exponent
of $F$ with respect to the $\left(\frac{\alpha-m_{1}}{m_{2}-m_{1}},\frac{m_{2}-\alpha}{m_{2}-m_{1}}\right)$-Bernoulli
measure.
\end{example}

\begin{example}
\label{exa:more intervals}For the last example we change the map
$F$. Assume that $F$ has $g_{1}+g_{2}$ intervals where $g_{1}$,
$g_{2}\geq1$ such that on each interval the slope $1/c\geq g_{1}+g_{2}$.
Further, we choose $\Psi$ in such a way that on $g_{1}$ intervals
it is $-1$ and on the others it is $+1$. An analogous calculation
as in the previous examples gives for $\alpha\in[-1,1]$ the following
dimension spectrum
\[
\delta_{\alpha}=\delta_{\alpha}(c,g_{1},g_{2})=-\frac{g_{1}\cdot\left(\frac{1-\alpha}{2g_{1}}\right)\log\left(\frac{1-\alpha}{2g_{1}}\right)+g_{2}\cdot\left(\frac{1+\alpha}{2g_{2}}\right)\log\left(\frac{1+\alpha}{2g_{2}}\right)}{\log(1/c)}.
\]
Once more, observe that this expression is a quotient of the measure-theoretic
entropy over the Lyapunov exponent of $F$ for a corresponding Bernoulli
measure. For a particular choice consider $g_{1}=1$, $g_{2}=2$ and
$c=1/3$, see also Figure \ref{fig:more examples}. We have $\dim_{H}\left(\mathbf{E}\left(1\right)\right)=\log2/\log3$
and this means that the Hausdorff dimension of $\mathbf{E}\left(1\right)$
coincides with the dimension of the $1/3$-Cantor set. 
\begin{figure}[h]
\pgfplotsset{width=\textwidth*0.48,compat=1.9}
\begin{tikzpicture}[line cap=round,line join=round,>=triangle 45]
\begin{axis}[xmin=-1.2,xmax=2.2,ymin=-0.2,ymax=1.1, 
 axis lines=middle,     axis line style={->},     tick style={color=black,line width=1.1pt},     xtick={-1,0,1,2}, xticklabels={$-1$,$0$,$1$,$2$},  ytick={0,1},yticklabels={$0$,$1$} ]     
\addplot 
[         domain=-1:2, samples=270,line width=1.2pt,  
] {(ln(3)/ln(2)-((x+1)*ln(x+1)+(2-x)*ln(2-x))/(3*ln(2)))}; 
\addplot 
[         domain=0:1, samples=197,line width=1.2pt, dashed 
] {(-(x*ln(x)+(1-x)*ln(1-x))/(ln(2)))}; 
\end{axis}
\end{tikzpicture}
\;\;
\begin{tikzpicture}[line cap=round,line join=round,>=triangle 45]
\begin{axis}[xmin=-1.2,xmax=1.2, ymin=-0.2,ymax=1.1,    
  axis lines=middle,     axis line style={->},     tick style={color=black,line width=1.1pt},     xtick={-1,0,1}, xticklabels={$-1$,$0$,$1$},  ytick={0,1},yticklabels={$0$, $1$} ]     
\addplot 
[         domain=-1:1, samples=197,line width=1.2pt,  
] {(-((1-x)*ln((1-x)/2)+(1+x)*ln((1+x)/4))/(2*ln(3)))}; 
\addplot 
[         domain=-1:1, samples=197,line width=1.2pt, dashed 
] {(-((1-x)*ln((1-x)/16)+(1+x)*ln((1+x)/4))/(2*ln(10)))}; 
\end{axis}
\end{tikzpicture}

\caption{The escape rate spectrum $\alpha\protect\mapsto\delta_{\alpha}$ for
Example \ref{exa:asymmetric setp} on the left (solid line with $c=1/2$,
$m_{1}=-1$, $m_{2}=2$ and dashed line with $c=1/2$, $m_{1}=0$,
$m_{2}=1$) and for Example \ref{exa:more intervals} on the right
(solid line with $c=1/3$, $g_{1}=1$, $g_{2}=2$ and dashed line
with $c=1/10$, $g_{1}=8$, $g_{2}=2$). \label{fig:more examples}}
\end{figure}
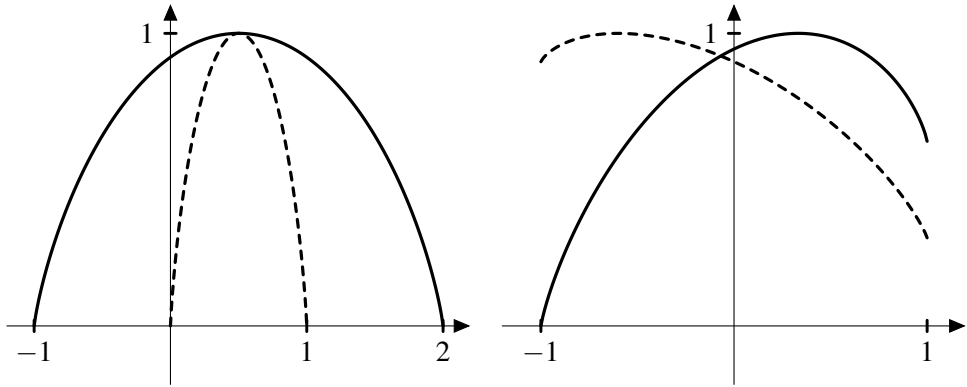

Finally, we provide an application of our results partially recovering
a result of \cite{rees_1981} about extensions of Kleinian groups.
Recall that if $G=\left\langle g_{1},\dots,g_{k}\right\rangle $ is
a Schottky group, the limit set of $G$ can be represented by the
subshift of finite type $\Sigma_{G}:=\left\{ \omega=(\omega_{1},\omega_{2}\dots)\in I^{\N}\mid\omega_{i}\neq-\omega_{i+1},\,\,i\in\N\right\} $
with alphabet $I=\left\{ \pm1,\dots\pm k\right\} $. Then $\pi$ denotes
the natural coding map of the limit set with respect to $\Sigma_{G}$.
Further, the $\sigma$-invariant $\delta_{G}$-dimensional Patterson-Sullivan
measure of $G$ is given by the $\sigma$-invariant Gibbs measure
on $\Sigma_{G}$ with respect to the Hölder continuous geometric potential
$\delta_{G}\cdot\varphi:\Sigma_{G}\rightarrow\R$ associated with
$G$ (\cite{Bowen1979}). Concerning the general theory of the Patterson-Sullivan
measure and its $\sigma$-invariant version, see \cite{MR0450547}
and \cite{MR556586}, respectively.
\end{example}

\begin{prop}
Let $G=\left\langle g_{1},\dots,g_{k}\right\rangle $ be a Schottky
group. Let $\mu$ be the $\sigma$-invariant version of the $\delta_{G}$-dimensional
Patterson-Sullivan measure on the subshift of finite type $\Sigma_{G}$
with alphabet $I=\left\{ \pm1,\dots\pm k\right\} $. Then we have
$\mu\left(\left[i\right]\right)=\mu\left(\left[-i\right]\right)$
for all $i\in I$.
\end{prop}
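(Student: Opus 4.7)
The plan is to pass to the two-sided natural extension $(\hat\Sigma_G, \hat\sigma, \hat\mu)$ of $(\Sigma_G, \sigma, \mu)$ and to exploit a time-reversal symmetry of the Patterson-Sullivan measure that is intrinsic to the Schottky setting. Since $\hat\mu$ projects onto $\mu$, for every $i \in I$ we have $\mu([i]) = \hat\mu([i]_0)$, where $[i]_0 := \{\omega \in \hat\Sigma_G : \omega_0 = i\}$.

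First, I would introduce the involution $R : \hat\Sigma_G \to \hat\Sigma_G$ defined by $R(\omega)_n := -\omega_{-n-1}$. A direct verification shows that $R$ preserves the Markov condition $\omega_n \neq -\omega_{n+1}$, is an involution, and intertwines $\hat\sigma$ with $\hat\sigma^{-1}$, i.e.\ $R \circ \hat\sigma = \hat\sigma^{-1} \circ R$. Geometrically, $R$ reverses the orientation of the bi-infinite geodesic coded by $\omega$: each letter is replaced by the symbol representing the inverse generator, and the word is read in reverse.

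The decisive step is to show $R_* \hat\mu = \hat\mu$. Since $R_*\hat\mu$ is $\hat\sigma$-invariant (by the intertwining) and $\hat\mu$ is the unique equilibrium state on $\hat\Sigma_G$ of the potential $\Phi := \delta_G \cdot \varphi$, it suffices by uniqueness of the equilibrium state to verify that $\Phi$ and $\Phi \circ R$ are cohomologous. By Livshitz's theorem, this reduces to the identity $S_n \Phi(\omega) = S_n(\Phi \circ R)(\omega) = S_n \Phi(R\omega)$ at every $\hat\sigma$-periodic point $\omega$ of period $n$. Interpreting such $\omega$ as the cyclic word of the hyperbolic element $h_\omega = g_{\omega_0} \cdots g_{\omega_{n-1}} \in G$, the periodic orbit of $R\omega$ encodes $h_\omega^{-1}$, and both Birkhoff sums compute $-\delta_G$ times the translation length of the unoriented closed geodesic in $\mathbb{H}^n/G$ represented by $h_\omega$ (equivalently, by $h_\omega^{-1}$).

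The principal obstacle is this last identification: the naive relation $S_n\varphi(\omega) = -d_H(0, h_\omega \cdot 0)$ holds only up to a bounded error depending on the choice of base point, and one must argue (using the Möbius cocycle identity $|g_i'(\xi)| \cdot |g_{-i}'(g_i\xi)| = 1$) that this error is itself a coboundary, so that Livshitz's criterion genuinely applies to $\Phi - \Phi \circ R$. Once $R_*\hat\mu = \hat\mu$ is established, the conclusion is immediate:
\[
\mu([-i]) = \hat\mu([-i]_0) = \hat\mu\bigl(\hat\sigma^{-1}[-i]_0\bigr) = \hat\mu(R^{-1}[i]_0) = \hat\mu([i]_0) = \mu([i]),
\]
using $\hat\sigma$-invariance of $\hat\mu$ in the second equality and the identity $R^{-1}[i]_0 = \{\omega \in \hat\Sigma_G : \omega_{-1} = -i\}$ in the third.
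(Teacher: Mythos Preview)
Your approach is correct and takes a genuinely different route from the paper. The paper works directly with the Bowen--Margulis--Sullivan current: it writes $\mu([i])$ as the double integral $\int_{\pi[i]}\int_{\bigcup_{j\ne i}\pi[j]}|\xi-\eta|^{-2\delta_G}\,dm(\eta)\,dm(\xi)$ and then applies the $G$-invariance of this current together with the ping-pong relation $g_i\bigl(\pi[i]\times\bigcup_{j\ne i}\pi[j]\bigr)=\bigl(\bigcup_{j\ne -i}\pi[j]\bigr)\times\pi[-i]$ to conclude in one line. Your argument via the flip $R$ on the natural extension and uniqueness of the equilibrium state is the purely symbolic encoding of the same time-reversal symmetry of the geodesic flow; it replaces the explicit current formula by Liv\v{s}ic and the Gibbs characterisation. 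The paper's proof is shorter and more geometric; yours stays entirely inside thermodynamic formalism and would transfer to settings where no concrete current is at hand.

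Two minor points. First, the ``principal obstacle'' you flag is not one: at a period-$n$ point $\omega$ the Birkhoff sum $S_n\varphi(\omega)$ equals \emph{exactly} the log-multiplier $\log|h_\omega'(\xi^+_{h_\omega})|=-\ell(h_\omega)$ at the attracting fixed point, so the bounded-error term you worry about is irrelevant at periodic points and Liv\v{s}ic applies without any further coboundary argument. Second, in your final chain there is an indexing slip: $\hat\sigma^{-1}[-i]_0=\{\omega:\omega_1=-i\}$ while $R^{-1}[i]_0=\{\omega:\omega_{-1}=-i\}$. These are not the same set, but they have the same $\hat\mu$-measure by one more application of $\hat\sigma$-invariance (or simply replace $\hat\sigma^{-1}$ by $\hat\sigma$ in that step).
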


\begin{proof}
Denote by $m$ the $\delta_{G}$-dimensional Patterson-Sullivan measure.
Recall that $|\xi-\eta|^{-2\delta_{G}}dm(\xi)\times dm(\eta)$ defines
a $G$-invariant measure on the geodesics on $\mathbb{H}^{n}\backslash G$
represented by $\Lambda(G)\times\Lambda(G)$. By disintegration of
this measure, we obtain the $\sigma$-invariant version $\mu$ on
$\Sigma_{G}$ which satisfies for all $i\in I$, 
\[
\mu([i])=\int_{\pi([i])}\sum_{j\neq i}\int_{\pi([j])}\left|\xi-\eta\right|^{-2\delta_{G}}dm(\eta)dm(\xi).
\]
Since 
\[
g_{i}\left\{ \pi[i]\times\left(\bigcup_{j\neq i}\pi[j]\right)\right\} =\left(\bigcup_{j\neq-i}\pi[j]\right)\times\pi[-i],
\]
the $G$-invariance of $|\xi-\eta|^{-2\delta_{G}}dm(\xi)\times dm(\eta)$
implies 
\begin{align*}
\mu([i]) & =\int_{\pi([i])}\sum_{j\neq i}\int_{\pi([j])}\left|\xi-\eta\right|^{-2\delta_{G}}dm(\eta)dm(\xi)\\
 & =\int_{\pi([-i])}\sum_{j\neq-i}\int_{\pi([j])}\left|\xi-\eta\right|^{-2\delta_{G}}dm(\eta)dm(\xi)=\mu([-i]).
\end{align*}
This shows the assertion.
\end{proof}
Now, let $G$ be a Schottky group and $N<G$ a normal subgroup such
that $G/N=\left\langle g\right\rangle \cong\Z$ and without loss of
generality $g\in\left\{ g_{1},\ldots,g_{k}\right\} $. Then, using
Kronecker delta notation, we set $\psi:\Sigma_{G}\rightarrow\Z$,
$\psi(\omega):=\delta_{g_{\omega_{1}},g}-\delta_{g_{\omega_{1}},g^{-1}}$.
Relating the hyperbolic distance to $\varphi$ as in \cite{MR2041265}
as well as replacing the fullshift $\Sigma$ with $\Sigma_{G}$ in
the definition of Poincaré exponents, we deduce that $\delta=\delta_{G}$
and $\delta_{0}=\delta_{N}$. Further, note that $0\in\left(\underline{\psi},\overline{\psi}\right)=(-1,1)$
and that the first and the last assertion of Theorem \ref{thm:fibre-induced pressure via base pressure}
and Corollary \ref{cor:full pressure if no drift} remain valid also
for mixing subshifts of finite type. Hence, combining the previous
proposition with Corollary \ref{cor:full pressure if no drift} and
Theorem \ref{thm:fibre-induced pressure via base pressure} applied
to $\delta_{G}\varphi$, gives Corollary \ref{thm:Kleingroup divergence type}.

\end{document}